\global\setlength{\textwidth}{16,5cm}%
\setlist{labelindent=1pt,itemsep=.5em}
\setlist[itemize]{leftmargin=1.2cm}
\setlist[enumerate]{itemindent=0em,leftmargin=1.2cm}
\setlist[enumerate,1]{label={\upshape(\roman*)}}
\newtheorem{theorem}{Theorem}[section]
\newtheorem{cor}[theorem]{Corollary}
\newtheorem{thm}[theorem]{Theorem}
\newtheorem{lem}[theorem]{Lemma}
\newtheorem{prop}[theorem]{Proposition}
\newtheorem{exmpl}{Example}[section]
\newtheorem{defn}[theorem]{Definition}
\newtheorem{remq}[theorem]{Remark}
\newcommand{\N}{\mathbb{N}}
\newcommand{\C}{\mathbb{C}}
\newcommand{\K}{\mathbb{K}}
\newcommand{\subjclass}[2][2020]{%
  \let\@oldtitle\@title%
  \gdef\@title{\@oldtitle\footnotetext{#1 \emph{Mathematics subject classification}: #2}}%
}
\newcommand{\keywords}[1]{%
  \let\@@oldtitle\@title%
  \gdef\@title{\@@oldtitle\footnotetext{\emph{Keywords}: #1}}%
}
\title{On $(\lambda,\mu,\gamma)$-derivations of BiHom-Lie algebras}
\author{Nejib Saadaoui$^{1}$, Sergei Silvestrov$^{2}$ \authorcr
\small{$^{1}$Université de Gabès, Institut Supérieur de l'Informatique Medenine, \authorcr
Rue Djerba km3, B.P 283 Medenine 4100, Tunisie.\authorcr
e-mail: najibsaadaoui@yahoo.fr
\authorcr
$^{2}$ Division of Mathematics and Physics, School of Education, Culture and Communication, \authorcr
M{\"a}lardalen University, Box 883, 72123 V{\"a}ster{\aa}s, Sweden. \authorcr
e-mail: sergei.silvestrov@mdh.se}}
\subjclass[2020]{17B61, 17D30}
\keywords{BiHom-Lie algebra, BiHom-Lie derivation, derivation, centroid}
\date{}
\begin{document}

\maketitle

%\footnote[0]{{\it Corresponding authors}: Nejib Saadaou, Sergei Silvestrov}

\abstract{In this paper, we generalize the results  about generalized
derivations of Lie algebras to the case of BiHom-Lie algebras. In particular  we give the classification of generalized derivations of Heisenberg BiHom-Lie algebras. The definition of the generalized derivation depends on some parameters $ (\lambda,\mu,\gamma)\in \C^{3}. $
In particular  for  $(\lambda,\mu,\gamma)=(1,1,1)$, we obtain classical concept
of derivation of BiHom-Lie algebra  and for $(\lambda,\mu,\gamma)=(1,1,0)  $ we obtain the centroid of
BiHom-Lie algebra. We give classifications of $ 2 $-dimensional
BiHom-Lie algebra, centroides and derivations of  $ 2 $-dimensional
BiHom-Lie algebras.}

\section{Introduction}
The investigations of various quantum deformations or $q$-deformations of Lie algebras began a period of rapid expansion in 1980's stimulated by introduction of quantum groups motivated by applications to the quantum Yang-Baxter equation, quantum inverse scattering methods and constructions of the quantum deformations of universal enveloping algebras of semi-simple Lie algebras. Various $q$-deformed Lie algebras have appeared in physical contexts such as string theory, vertex models in conformal field theory, quantum mechanics and quantum field theory in the context of deformations of infinite-dimensional algebras, primarily the Heisenberg algebras, oscillator algebras and Witt and Virasoro algebras. In \cite{AizawaSaito,ChaiElinPop,ChaiIsLukPopPresn,ChaiKuLuk,ChaiPopPres,CurtrZachos1,DamKu,DaskaloyannisGendefVir,Hu,Kassel92,LiuKQuantumCentExt,LiuKQCharQuantWittAlg,LiuKQPhDthesis},
it was in particular discovered that in these $q$-deformations of Witt and Visaroro algebras and some related algebras, some interesting $q$-deformations of Jacobi identities, extending Jacobi identity for Lie algebras, are satisfied. This has been one of the initial motivations for the development of general quasi-deformations and discretizations of Lie algebras of vector fields using more general $\sigma$-derivations (twisted derivations) in \cite{HartwigLarssonSilvestrov:defLiealgsderiv}.

Hom-Lie algebras and more general quasi-Hom-Lie algebras were introduced first by Hartwig, Larsson and Silvestrov \cite{HartwigLarssonSilvestrov:defLiealgsderiv}, where the general quasi-deformations and discretizations of Lie algebras of vector fields using more general $\sigma$-derivations (twisted derivations) and a general method for construction of deformations of Witt and Virasoro type algebras based on twisted derivations have been developed, initially motivated by the $q$-deformed Jacobi identities observed for the $q$-deformed algebras in physics, along with $q$-deformed versions of homological algebra and discrete modifications of differential calculi. Hom-Lie algebras, Hom-Lie superalgebras, Hom-Lie color algebras and more general quasi-Lie algebras and color quasi-Lie algebras where introduced first in \cite{LarssonSilv2005:QuasiLieAlg,LarssonSilv:GradedquasiLiealg,SigSilv:CzechJP2006:GradedquasiLiealgWitt}. Quasi-Lie algebras and color quasi-Lie algebras encompass within the same algebraic framework the quasi-deformations and discretizations of Lie algebras of vector fields by $\sigma$-derivations obeying twisted Leibniz rule, and the well-known generalizations of Lie algebras such as color Lie algebras, the natural generalizations of Lie algebras and Lie superalgebras. In quasi-Lie algebras, the skew-symmetry and the Jacobi identity are twisted by deforming twisting linear maps, with the Jacobi identity in quasi-Lie and quasi-Hom-Lie algebras in general containing six twisted triple bracket terms. In Hom-Lie algebras, the bilinear product satisfies the non-twisted skew-symmetry property as in Lie algebras, and the Hom-Lie algebras Jacobi identity has three terms twisted by a single linear map, reducing to the Lie algebras Jacobi identity when the twisting linear map is the identity map. Hom-Lie admissible algebras have been considered first in \cite{ms:homstructure}, where in particular the Hom-associative algebras have been introduced and shown to be Hom-Lie admissible, that is leading to Hom-Lie algebras using commutator map as new product, and in this sense constituting a natural generalization of associative algebras as Lie admissible algebras. Since the pioneering works \cite{HartwigLarssonSilvestrov:defLiealgsderiv,LarssonSilvJA2005:QuasiHomLieCentExt2cocyid,LarssonSilv:GradedquasiLiealg,LarssonSilv2005:QuasiLieAlg,LarssonSilv:QuasidefSl2,ms:homstructure}, Hom-algebra structures expanded into a popular area with increasing number of publications in various directions. Hom-algebra structures of a given type include their classical counterparts and open broad possibilities for deformations, Hom-algebra extensions of cohomological structures and representations, formal deformations of Hom-associative and Hom-Lie algebras, Hom-Lie admissible Hom-coalgebras, Hom-coalgebras, Hom-Hopf algebras \cite{AmmarEjbehiMakhlouf:homdeformation,BenMakh:Hombiliform,ElchingerLundMakhSilv:BracktausigmaderivWittVir,LarssonSilvJA2005:QuasiHomLieCentExt2cocyid,LarssonSilvestrovGLTMPBSpr2009:GenNComplTwistDer,MakhSil:HomHopf,MakhSilv:HomAlgHomCoalg,MakhSilv:HomDeform,
Sheng:homrep,ShengBai2014:homLiebialg,Yau:HomolHom,Yau:EnvLieAlg}.
Hom-Lie algebras, Hom-Lie superalgebras and color Hom-Lie algebras and their $n$-ary generalizations have been further investigated in various aspects for example in \cite{AbdaouiAmmarMakhloufCohhomLiecolalg2015,AbramovSilvestrov:3homLiealgsigmaderivINvol,AmmarEjbehiMakhlouf:homdeformation,AmmarMabroukMakhloufCohomnaryHNLalg2011,AmmarMakhloufHomLieSupAlg2010,AmmarMakhloufSaadaoui2013:CohlgHomLiesupqdefWittSup,AmmarMakhloufSilv:TernaryqVirasoroHomNambuLie,ArmakanFarhangdoost:IJGMMP,ArmakanSilv:envelalgcertaintypescolorHomLie,ArmakanSilvFarh:envelopalgcolhomLiealg,ArmakanSilvFarh:exthomLiecoloralg,ArmakanSilv:NondegKillingformsHomLiesuperalg,
akms:ternary,ams:ternary,ArnlindMakhloufSilvnaryHomLieNambuJMP2011,Bakayoko2014:ModulescolorHomPoisson,BakayokoDialo2015:genHomalgebrastr,BakyokoSilvestrov:Homleftsymmetriccolordialgebras,BakyokoSilvestrov:MultiplicnHomLiecoloralg,BakayokoToure2019:genHomalgebrastr,
BenHassineMabroukNcib:ConstrMultiplicnaryhomNambualg,BenHassineChtiouiMabroukNcib:Strcohom3LieRinehartsuperalg,BenMakh:Hombiliform,CaoChen2012:SplitregularhomLiecoloralg,GuanChenSun:HomLieSuperalgebras,KitouniMakhloufSilvestrov,kms:solvnilpnhomlie2020,LarssonSigSilvJGLTA2008:QuasiLiedefFttN,MabroukNcibSilvestrov2020:GenDerRotaBaxterOpsnaryHomNambuSuperalgs,
ms:homstructure,MakhSilv:HomDeform,MakhSil:HomHopf,MakhSilv:HomAlgHomCoalg,Makhlouf2010:ParadigmnonassHomalgHomsuper,MandalMishra:HomGerstenhaberHomLiealgebroids,MishraSilvestrov:SpringerAAS2020HomGerstenhalgsHomLiealgds,RichardSilvJA2008:quasiLiesigderCtpm1,RichardSilvestrovGLTMPBSpr2009:QuasiLieHomLiesigmaderiv,
SigSilv:CzechJP2006:GradedquasiLiealgWitt,Sheng:homrep,ShengBai2014:homLiebialg,ShengChen2013:HomLie2algebras,ShengXiong:LMLA2015:OnHomLiealg,SigSilv:GLTbdSpringer2009,SilvestrovParadigmQLieQhomLie2007,
Yau2009:HomYangBaxterHomLiequasitring,Yau:EnvLieAlg,Yau:HomolHom,Yau:HomBial,Yuan2012:HomLiecoloralgstr,ZhouChenMa:GenDerHomLiesuper}.
In \cite{CanepeelGoyaverts:MonoidalHomHopfalgebras}, Hom-algebras has been considered from a category theory point of view, constructing a category on which algebras would be Hom-algebras. A generalization of this approach led to the discovery of BiHom-algebras in \cite{GrazMakhlMeniniPanaite:bihom}, called BiHom-algebras because the defining identities are twisted by two morphisms instead of only one for Hom-algebras.
BiHom-Frobenius algebras and double constructions have been investigated in \cite{HounkonnouHoundedjiSilvestrov:DoubleconstrbiHomFrobalg}.

Derivations and generalized derivations of different algebraic structures are an important subject of study in algebra and diverse areas. They appear in many fields of Mathematics and Physics. In particular, they appear in representation theory and cohomology theory among other areas. They have various applications relating algebra to geometry and allow the construction of new algebraic structures.  There are many generalizations of derivations, for example, Leibniz derivations and $\delta$-derivations of prime Lie and Malcev algebras and related $n$-ary algebras structures \cite{DalTakh,Filippov:nLie,Filippov1998:deltaderivLiealg,Filippov1999:deltaderivprimeLiealg,Filippov2000:deltaderivprimealternMalcevalg,KaygorodovPopov2014:Altalgadmderiv}.
The properties and structure of generalized derivations algebras of a Lie algebra and their subalgebras and quasi-derivation algebras were systematically studied in \cite{LegerLuks:GenDerivLiealg}, where it was proved for example that the quasi-derivation algebra of a Lie algebra can be embedded into the derivation algebra of a larger Lie algebra. Derivations and generalized derivations of $n$-ary algebras were considered in \cite{PojidaevSaraiva:Derivternmalcevalg,Williams:NilpnLiealg} and it was demonstrated substantial differences in structures and properties of derivations on Lie algebras and on $n$-ary Lie algebras for $n>2$. Generalized derivations of Lie superalgebras and Hom-Leibniz algebras have been considered in \cite{ZhangZhang:GenDerLieSuperalg,ZhouZhaoZhang:GenDerHomLeibnizalg}.
Generalized derivations of Lie color algebras and $n$-ary (color) algebras have been studied in \cite{ChenMaNi:GenDerLieColAlg, Kaygorodov2012:deltaDerivnaryalg,Kaygorodov2011:nplus1Aryderivsimplenaryalg,Kaygorodov2014:nplus1AryderivsemisimpleFilipovalg, KaygorodovPopov2016:GeneralizedderivcolornLiealg}. Generalized derivations of Lie triple systems have been considered in \cite{ZhouChenMa:GenDerLieTripSyst}. Generalized derivations of various kinds can be viewed as a generalization of $\delta$-derivation. Quasi-Hom-Lie and Hom-Lie structures for $\sigma$-derivations and $(\sigma,\tau)$-derivations have been considered in \cite{ElchingerLundMakhSilv:BracktausigmaderivWittVir,HartwigLarssonSilvestrov:defLiealgsderiv,LarssonSilv:QuasidefSl2,RichardSilvJA2008:quasiLiesigderCtpm1,RichardSilvestrovGLTMPBSpr2009:QuasiLieHomLiesigmaderiv}.
Graded $q$-differential algebra and applications to semi-commutative Galois Extensions and Reduced Quantum Plane and $q$-connection was studied in \cite{AbramovJNMPh2006:gradedqdifalg,AbramovGLTMPBSpr2009:GradedqDiffAlgqConnect,AbramovRaknuz2016EngMathSpr:SemicomGaloisExtRedQPl}.
Generalized $N$-complexes coming from twisted derivations where considered in \cite{LarssonSilvestrovGLTMPBSpr2009:GenNComplTwistDer}.

Generalizations of derivations in connection with extensions and enveloping algebras of Hom-Lie color algebras and Hom-Lie superalgebras have been considered in \cite{ArmakanSilvFarh:envelopalgcolhomLiealg,ArmakanSilvFarh:exthomLiecoloralg,BakyokoSilvestrov:MultiplicnHomLiecoloralg,GuanChenSun:HomLieSuperalgebras}. Generalized derivations of multiplicative $n$-ary Hom-$\Omega$ color algebras have been studied in \cite{BeitesKaygorodovPopov}. Derivations, $L$-modules, $L$-comodules and Hom-Lie quasi-bialgebras have been considered in \cite{Bakayoko:LmodcomodhomLiequasibialg,Bakayoko:LaplacehomLiequasibialg}. In \cite{kms:narygenBiHomLieBiHomassalgebras2020}, constructions of $n$-ary generalizations of BiHom-Lie algebras and BiHom-associative algebras have been investigated. Generalized Derivations of $n$-BiHom-Lie algebras have been studied in \cite{BenAbdeljElhamdKaygorMakhl201920GenDernBiHomLiealg}. Color Hom-algebra structures associated to Rota-Baxter operators have been considered in context of Hom-dendriform color algebras in \cite{BakyokoSilvestrov:Homleftsymmetriccolordialgebras}. Rota-Baxter bisystems and covariant bialgebras, Rota-Baxter cosystems, coquasitriangular mixed bialgebras, coassociative Yang-Baxter pairs, coassociative Yang-Baxter equation and generalizations of Rota-Baxter systems and algebras, curved $\mathcal{O}$-operator systems and their connections with (tri)dendriform systems and pre-Lie algebras have been considered in \cite{MaMakhSil:CurvedOoperatorSyst,MaMakhSil:RotaBaxbisyscovbialg,MaMakhSil:RotaBaxCosyCoquasitriMixBial}.
Generalisations of derivations are important for Hom-Gerstenhaber algebras, Hom-Lie algebroids and Hom-Lie-Rinehart algebras and Hom-Poisson homology
\cite{MishraSilvestrov:SpringerAAS2020HomGerstenhalgsHomLiealgds}.

It is well known that for a derivation $ d $ of  Lie algebra $ L $,  is just an
endomorphisms  on $ L $ such that
\begin{equation}\label{LieDerivation}
 d\left([x,y] \right)=\left[ d(x),y\right] +\left[x,d(y) \right]
\end{equation}
 for all $ x,\, y \in L. $
There were several non–equivalent ways generalizing this definition, for example:
%have recently been studied
%approaches to the study of generalized derivations on Lie algebras:
\begin{enumerate}[label=\upshape{\arabic*)},left=0pt]	
\item  The mapping $ d\in End(L) $ is called a generalized derivation of
	 $ L $ if there exist elements $ d',\, d''\in End(L) $ such that,
\begin{equation} \label{genlzdDerivation}
\left[ d(x),y\right] +\left[x,d'(y) \right]=d''\left([x,y] \right)
\end{equation}
for all $ x,\, y \in L $, and  we call $ d\in End(L) $ a quasiderivation of $ L $ if there exists $ d'\in End(L) $
 such that
\begin{equation} \label{quasiDerivation}
\left[ d(x),y\right] +\left[x,d(y) \right]=d'\left([x,y] \right).
\end{equation}
The centroid of $ L $ denoted as $ \varGamma(L) $ is defined by
\begin{equation}
  \varGamma(L)=\{d\in End(L)\mid d\left([x,y] \right)=\left[ d(x),y\right] =\left[x,d(y) \right],\, \forall x,y\in L   \}  .
\end{equation}
(see for example \cite{GeorgeEugen:GenderivLiealg}).	
	\item %Let $ (A,\mu) $ be an algebra over a unitai commutative associative ring $ \K $.
	 Given an arbitrary $ \delta\in \K $, a $ \delta $-derivation of a Lie algebra $  L $ is defined to be a $ \K $-linear mapping $ d\colon L\to L $
	satisfying the identity
\begin{equation}	
	 d\left([ x,y]\right) =\delta\left[  d(x),y \right]  +\delta\left[ x,d(y) \right]
\end{equation}	
(see for example \cite{Filippov1998:deltaderivLiealg}). Observe that,  any  linear mapping in the centroid $  \varGamma(L) $ is a $ \frac{1}{2} $-derivation of $ L $.
	\item We call a linear operator $ d\in End(L) $  an
	$ (\alpha,\beta,\gamma) $-derivation of $ L $ if there exist $\alpha,\beta,\gamma\in \K  $ such that for all $ x,y\in L $
\begin{equation}	
\alpha\, d\left([ x,y]\right) =\beta\left[  d(x),y \right]  +\gamma\left[ x,d(y) \right].
\end{equation}
(See for example \cite{NovotnyHrivnak:abgderivLiealinvfncs}). Observe that,  any  linear mapping in the centroid $  \varGamma(L) $ is a $ (1,1,0)$-derivation of $ L $.
\end{enumerate}
In \cite{Sheng:homrep}, the notion of $ \alpha^{k} $-derivation of  Hom-Lie
algebra, a generalization of derivation of  Lie algebras
\eqref{LieDerivation}, is considered. In \cite{ZhouNiuChen:GhomDerivation}
the authors extend the definition of type \eqref{LieDerivation} of a generalized derivation of Lie algebras to Hom-Lie algebras. The definition of type \eqref{LieDerivation} is extended to the BiHom-Lie case in \cite{belhsine}.
In this article, we aim to discuss the version \eqref{quasiDerivation} of  generalized derivations of BiHom-Lie algebras.

The paper is organized as follows. In Section \ref{sec:defprelimres}, we recall some basic definitions  and facts needed
later for considerations and results in this article. In Section \ref{sec:genderivBiHomLiealg},  we introduce $ (\lambda,\mu,\gamma) $-$ \alpha^k\beta^l $-derivations and show their pertinent properties. Also, we classify the possible values of $ \lambda,\mu,\gamma \in\C $ for a space
 $ Der^{\lambda,\mu,\gamma }_{\alpha^{k}\beta^{l}}(G) $ of
$ (\lambda,\mu,\gamma)  $-$ \alpha^k\beta^l $-derivations of regular BiHom-Lie algebra $ G $.
The previous classification is applied to Heisenberg BiHom-Lie algebra case.
Next, we analyze each one
of the following cases: $ Der^{\delta,0,0}_{\alpha^{k}\beta^{l}}(G) $ with $ \delta\in \{0,1\} $,
$ Der^{\delta,1,0}_{\alpha^{k}\beta^{l}}(G) $,
$ Der^{\delta,1,1}_{\alpha^{k}\beta^{l}}(G) $,
$ Der^{1,1,-1}_{\alpha^{k}\beta^{l}}(G) $,
$ Der^{0,1,-1}_{\alpha^{k}\beta^{l}}(G) $.
In Section \ref{sec:classmultiplic2dimBiHomLiealg}, we give a method to determine whether two different  $ 2 $-dimensional multiplicative BiHom-Lie algebras are isomorphic or not, and then we obtain a complete classification of   $ 2 $-dimensional multiplicative  BiHom-Lie algebras
up to isomorphism.   In Section \ref{sec:centrderiv2dimmultipBiHomLiealg} we deal with the
problem of description of centroids and derivations of $ 2 $-dimensional BiHom Lie  algebras. Here we provide
algorithms to find centroids and derivations by using an algebra software.

\section{Definitions and Preliminary Results}
\label{sec:defprelimres}
\begin{defn}[\cite{GrazMakhlMeniniPanaite:bihom,Yongsheng}]
	A BiHom-Lie algebra over a field $ \K $ is a $ 4 $-tuple $ \left( L,[\cdot,\cdot],\alpha,\beta \right)  $, where $  L $ is
	a  $ \K $-linear space, $ \alpha\colon L\to L $ , $ \beta \colon L\to L$  and
	$ [\cdot,\cdot] \colon L\times L\to L$
	are linear maps, satisfying the following conditions, for all $ x,\,y,\, z\in L $:
	\begin{align}
	&\alpha\circ \beta=\beta\circ \alpha, \label{commute} \\
	%&\alpha\left( [x,y] \right) =[\alpha(x),\alpha(y)]  \quad  \text{  and   }   \quad   \beta\left( [x,y] \right) =[\beta(x),\beta(y)],\label{multiplicative}\text{(multiplicative)}\\
	&\left[\beta(x),\alpha(y)\right]=-\left[\beta(y),\alpha(x)\right] \ \ \text{\rm (skew-symmetry)} \label{skew}\\
	&\left[ \beta^{2}(x),[\beta(y),\alpha(z)]\right] +\left[\beta^{2}(y), [\beta(z),\alpha(x)]\right] + \left[ \beta^{2}(z),[\beta(x),\alpha(y)]\right] =0
\label{BiHomJacobi} \\
	&\qquad\qquad\qquad\qquad\qquad\qquad \ \ \text{\rm (BiHom-Jacobi identity).}   \nonumber
	\end{align}
A Bihom-Lie algebra is called a multiplicative Bihom-Lie algebra if	for any $ x,\,y\in L $,
\begin{equation}
 \alpha\left( [x,y] \right) =[\alpha(x),\alpha(y)]  \quad  \text{  and   }   \quad   \beta\left( [x,y] \right) =[\beta(x),\beta(y)].
 \label{multiplicative}
\end{equation}	
A  BiHom-Lie algebra is called a regular BiHom-Lie algebra if $ \alpha,\, \beta $ are bijective maps.
\end{defn}
In general for $ n $-dimensional case in terms of structure constants we have:
\begin{equation}\label{constant}
\begin{array}{lll}
[e_i,e_j] &=& \displaystyle{\sum_{s=1}^{n}} C_{ij}^{s} e_s,\, \\
\alpha(e_j)&=& \displaystyle{\sum_{s=1}^{n}} a_{sj}e_s \qquad \text{ and } \qquad
\beta(e_j)=\displaystyle{\sum_{s=1}^{n}} b_{sj}e_s.
\end{array}
\end{equation}
Substituting \eqref{constant} in the skew-symmetry identity \eqref{skew} yields	
	\begin{equation}\label{skewbih}	
\sum_{1\leq p,q\leq n }\left( b_{pi}a_{qj}+b_{pj}a_{qi}\right) C_{pq}^{s}	=0.
	\end{equation}	
Substituting \eqref{constant} in the BiHom-Jacobi identity \eqref{BiHomJacobi} yields
\begin{equation}\label{jacobbih}	
\sum_{1\leq p,q,s,l,s'\leq n }\left( b_{s'i}b_{qj}a_{sk}+
b_{s'j}b_{qk}a_{si}
+b_{s'k}b_{qi}a_{sj}\right) b_{ps'}C_{qs}^{l}C_{pl}^{r}	=0.
\end{equation}
Substituting  \eqref{constant} in the multiplicativity conditions \eqref{multiplicative} yields
\begin{equation}\label{mulbih}	
\begin{array}{lll}
\displaystyle{\sum_{1\leq k\leq n }} C_{ij}^{k}a_{sk} &=& \displaystyle{\sum_{1\leq p,q\leq n }} a_{pi}a_{qj}C_{pq}^{s} \\*[0,5cm]
\displaystyle{\sum_{1\leq k\leq n }} C_{ij}^{k}b_{sk} &=& \displaystyle{\sum_{1\leq p,q\leq n }} b_{pi}b_{qj}C_{pq}^{s}
\end{array}
\end{equation}	
for all $ i,j,k\in \{1,\dots,n\} $.
\begin{defn}		
A morphism $ f\colon \left(L,[\cdot,\cdot],\alpha,\beta \right)\to  \left(L',[\cdot,\cdot]',\alpha',\beta' \right)  $
of BiHom-Lie algebras is a linear map $ f\colon L\to  L'  $
such that $ \alpha'\circ f=f\circ \alpha  $,  $ \beta'\circ f=f\circ \beta $ and
\begin{equation}\label{isom}
f\left( [x,y] \right) =\left[ f(x),f(y)\right]' ,\quad \forall  x,\,y\in L .
\end{equation}
	In particular,  BiHom-Lie algebras $ \left(L,[\cdot,\cdot],\alpha,\beta \right) $ and $ \left(L',[\cdot,\cdot]',\alpha',\beta' \right)  $ are isomorphic if $ f $ is an isomorphism map.
\end{defn}	
\noindent Let $ \left(L,[\cdot,\cdot],\alpha,\beta \right) $ be $ n $-dimensional  BiHom-Lie algebra with ordered basis $ (e_1 ,\dots, e_n) $
 and  $ L' $ be  $ n $-dimensional vector spaces with ordered basis  $ (e'_1 ,\dots, e'_n )$. Let $ f\colon L\to L' $ be an isomorphism map.
%and $ (m_{ij}) $ be the matrix of $ f $ relative to the basis $ (e_1 ,\dots, e_n) $
%for $ L $.
Let $ \alpha'=f\alpha f^{-1} $ and $ \beta'=f\beta f^{-1} $. We set with respect to a basis  $ (e'_1 ,\dots, e'_n )$:
\begin{gather}
%\alpha'(e'_j)=\sum_{i=1}^{n}a'_{ij}e'_i,\quad' \beta'(e'_j)=\sum_{i=1}^{n}b'_{ij}e'_i,\\
f(e_j)=\sum_{i=1}^{n}f_{ij}e'_i,\\
[e'_i,e'_j]'=\sum_{k=1}^{n} C_{ij}^{'k}e'_{k},\, i,j\in\{1,\dots, n\}.
\end{gather}
Condition \eqref{isom}  translates to the following equation
\begin{equation}\label{isomB}
\sum_{k=1}^{n}C_{ij}^{k}f_{sk}=\sum_{1\leq p,q\leq n }f_{pi}f_{qj} C_{pq}^{'s},\quad i,j,s\in\{1,\dots, n\} .
\end{equation}
Then, if the previous condition satisfied, $ L' $ is a BiHom-Lie algebra isomorphic to $ L $. 	
	%A BiHom-Lie algebra is called an  abelian BiHom-Lie algebra if $ [x,y]=0 $	for any $ x,\,y\in L. $\\		

\begin{defn}[\cite{belhsine,Yongsheng}]
	Let $ \left(L,[\cdot,\cdot],\alpha,\beta \right) $ be a BiHom-Lie algebra. A subspace $ \mathfrak{h} $ of $ L $	is called a BiHom-Lie subalgebra   of  $ \left(L,[\cdot,\cdot],\alpha,\beta \right) $ if $ \alpha(\mathfrak{h})\subset \mathfrak{h} $,    $ \beta(\mathfrak{h})\subset \mathfrak{h} $  and $ [\mathfrak{h},\mathfrak{h}] \subset \mathfrak{h}.$  In particular, a BiHom-Lie subalgebra $\mathfrak{h}  $  is said to be an ideal of $\left(L,[\cdot,\cdot],\alpha,\beta \right)   $ if $ [\mathfrak{h},L] \subset \mathfrak{h}$ and $ [L,\mathfrak{h}]\subset\mathfrak{h} $.
%%%%%%%%%%%%%%%%%%%%%%%%%%%%

	If $ I $ is an ideal of  $ \left(L,[\cdot,\cdot],\alpha,\beta \right)  $,   then $ \left(L/ I,[\cdot,\cdot],\overline{\alpha},\overline{\beta}\right)  $, where $ [\overline{x},\overline{y}]=\overline{[x,y]} $, for all $ \overline{x},\, \overline{y}\in L/ I $  and $ \overline{\alpha},\,\overline{\beta}\colon  L/ I\to  L/ I $ naturally induced by $ \alpha $ and $ \beta $, inherits a BiHom-Lie algebra structure, which is named quotient BiHom-Lie algebra.
\end{defn}	
In the following, we give some examples and applications of ideals of BiHom-Lie algebras.
\begin{prop}	
If $ \left(L,[\cdot,\cdot],\alpha,\beta \right)  $   is a BiHom-Lie algebra, then   $ I=\ker \alpha + \ker \beta $ is an 	ideal of $ L $.	
%In this case the quotient BiHom-Lie algebra $L/ I  $ is regular.	
\end{prop}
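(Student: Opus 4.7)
The plan is to verify the four containments that define an ideal: $\alpha(I)\subset I$, $\beta(I)\subset I$, $[I,L]\subset I$, and $[L,I]\subset I$, where an arbitrary element of $I$ will be written as $x=a+b$ with $a\in\ker\alpha$ and $b\in\ker\beta$.

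Stability under the twisting maps is the easy part and relies only on the commutativity relation \eqref{commute}. For any $b\in\ker\beta$, I would compute $\beta(\alpha(b))=\alpha(\beta(b))=\alpha(0)=0$, which shows $\alpha(b)\in\ker\beta\subset I$; combined with the obvious $\alpha(\ker\alpha)=\{0\}\subset I$, this yields $\alpha(I)\subset I$. The symmetric calculation, swapping $\alpha$ and $\beta$, gives $\beta(I)\subset I$.

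The substantive step is the closure under the bracket. Here I would use the multiplicativity identities \eqref{multiplicative} (the natural framework in which the rest of the paper operates): for $a\in\ker\alpha$ and $y\in L$, multiplicativity gives $\alpha([a,y])=[\alpha(a),\alpha(y)]=[0,\alpha(y)]=0$, so $[a,y]\in\ker\alpha\subset I$; analogously, for $b\in\ker\beta$ and $y\in L$, $\beta([b,y])=[\beta(b),\beta(y)]=0$, so $[b,y]\in\ker\beta\subset I$. Summing these inclusions produces $[x,y]=[a,y]+[b,y]\in I$, and the identical reasoning on the other slot handles $[y,x]\in I$.

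The main obstacle is conceptual rather than computational: in the fully general (non-multiplicative) BiHom setting, the axioms \eqref{commute}--\eqref{BiHomJacobi} only relate brackets already pre-twisted by $\alpha$ and $\beta$, and do not permit one to push $\alpha$ or $\beta$ inside an arbitrary bracket to conclude $\alpha([a,y])=0$ from $\alpha(a)=0$. Accordingly, the proposition is most naturally read under the multiplicativity hypothesis \eqref{multiplicative}, and under that reading the proof reduces to the three short verifications outlined above.
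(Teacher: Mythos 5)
Your proof is correct and follows essentially the same route as the paper's own one-line argument: the commutativity relation \eqref{commute} gives $\alpha(I)\subset I$ and $\beta(I)\subset I$, and the multiplicativity identities \eqref{multiplicative} give $[I,L]\subset I$ and $[L,I]\subset I$. Your observation that multiplicativity is genuinely needed (and not guaranteed by the bare BiHom-Lie axioms) is well taken and consistent with the paper, which invokes \eqref{multiplicative} in its proof and elsewhere declares that all BiHom-Lie algebras under consideration are multiplicative.
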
		
\begin{proof}
	By \eqref{commute} we get $ \alpha(I)\subset I $  and $ \beta(I)\subset I $.       By \eqref{multiplicative}   we obtain $ [I,L]\subset I $.
%\qed
\end{proof}
\begin{remq}
	 If $ L $   is finite-dimensional and $ \alpha $ (or $ \beta $) is diagonalizable, then there exist a subspace $ G $    such that $ L=I\oplus G $ and   $ \left(G,[\cdot,\cdot],\alpha_{/G},\beta_{/G} \right)  $   is a regular BiHom-Lie algebra.
\end{remq}

%%%%%%%%%%%%%%%%%%%%%%%%%%
\begin{defn}
Given a complex BiHom-Lie multiplicative algebra $L$, the center of $L$ is given by $ C(L)=\{x\in L\mid [x,y]=0\quad \forall y\in L\} $.
The descending central series of a BiHom-Lie algebra $L$ is given by the ideals
\[L^{0}=L;\quad L^k=[L,L^{k-1}],\quad k\geq 1.     \]
$ L $ is called nilpotent if $ L^{n}=\{0\} $ for some $ n\in \N $. If $ L^{n-1}\neq  \{0\}$, then $ L $ is said to be  $ n $-step nilpotent BiHom-Lie algebra.
The derived series of a BiHom-Lie algebra $ L $ is given by the ideals
$ L^{(0)}=L,\ L^{(k) }=[L^{(k-1) },L^{(k-1) }], k\geq 1.$
$ L $ is called solvable if $ L^{(n)}=\{0\} $ for some $ n\in \N $. If  $ L^{(n-1)}\neq  \{0\}$, then $ L $ is said to be
$ n $-step solvable BiHom-Lie algebra.	
\end{defn}
\begin{remq}
	 The center $ C(L) $ of $ L $ is not
 necessarily an ideal of $ L $. If $ \alpha $ and $ \beta $ are surjective then $ C(L) $ is an ideal of $L$.	 	
\end{remq}
\begin{defn}
	Let $ \left(L,[\cdot,\cdot],\alpha,\beta \right) $ be a BiHom-Lie algebra.   $ \left(L,[\cdot,\cdot],\alpha,\beta \right) $ is  called a simple  BiHom-Lie algebra if $ \left(L,[\cdot,\cdot],\alpha,\beta \right) $ has no proper ideals and is not abelian. $ \left(L,[\cdot,\cdot],\alpha,\beta \right) $ is called a semisimple  BiHom-Lie algebra if $ L $     is a direct sum of certain ideals.
\end{defn}
%\begin{prop}\label{regular}
%	Any finite-dimensional  simple BiHom-Lie algebra is regular.
%\end{prop}
%\begin{proof}
%	Let $ \left(L,[\cdot,\cdot],\alpha,\beta \right) $ be a finite-dimensional  simple BiHom-Lie algebra. With $\alpha\left( [x,y] \right) =[\alpha(x),\alpha(y)]  $ and     $ \beta\left( [x,y] \right) =[\beta(x),\beta(y)]  $
%	we can show  that $ \ker(\alpha) $ and  $ \ker(\beta) $  are ideals of $ \left(L,[\cdot,\cdot],\alpha,\beta \right) $.
%\end{proof}
\begin{prop}[\cite{GrazMakhlMeniniPanaite:bihom}]
	Let $ (L,[\cdot,\cdot]') $ be an ordinary Lie algebra over a field $ \K $ and let $ \alpha,\, \beta\colon L \to L $
	two commuting linear maps such that $ \alpha\left( [a,b]'\right) =\left[ \alpha(a) ,\alpha(b)\right]' $
	and $ \beta\left( [a,b]'\right) =\left[ \beta(a) ,\beta(b)\right]' $, for
	all $ a,\, b\in L $. Define the linear map $ [\cdot,\cdot]\colon L\times L \to L $, $ [a,b]=[\alpha(a),\beta(b)]' $,  for
	all $ a,\, b\in L $.
	Then $ L_{(\alpha,\beta)}:=\left( L,[\cdot,\cdot],\alpha,\beta \right)  $
	is a BiHom-Lie algebra, called the Yau twist of $ (L,[\cdot,\cdot]' ) $.	
\end{prop}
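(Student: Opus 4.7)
The plan is to verify the three defining axioms of a BiHom-Lie algebra for $L_{(\alpha,\beta)}$ directly from the definition $[a,b] = [\alpha(a),\beta(b)]'$, exploiting repeatedly that $\alpha,\beta$ commute and are both Lie endomorphisms of $(L,[\cdot,\cdot]')$. The commutativity condition \eqref{commute} holds by hypothesis, so only skew-symmetry \eqref{skew} and the BiHom-Jacobi identity \eqref{BiHomJacobi} need work.

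For skew-symmetry \eqref{skew}, I would simply compute
\begin{equation*}
[\beta(x),\alpha(y)] \;=\; [\alpha\beta(x),\beta\alpha(y)]' \;=\; [\alpha\beta(x),\alpha\beta(y)]'
\end{equation*}
using $\alpha\beta=\beta\alpha$, and then apply antisymmetry of $[\cdot,\cdot]'$ to get $-[\alpha\beta(y),\alpha\beta(x)]' = -[\beta(y),\alpha(x)]$.

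For the BiHom-Jacobi identity, the key observation is that applying $[\beta^2(x),[\beta(y),\alpha(z)]]$ through the definition and using that $\beta$ is a morphism of $[\cdot,\cdot]'$ should collapse the expression into a Lie-Jacobi term in the variables $\alpha\beta^2(x),\alpha\beta^2(y),\alpha\beta^2(z)$. Concretely, I would unfold the inner bracket first, $[\beta(y),\alpha(z)] = [\alpha\beta(y),\beta\alpha(z)]' = [\alpha\beta(y),\alpha\beta(z)]'$; then the outer bracket becomes $[\alpha\beta^2(x),\beta([\alpha\beta(y),\alpha\beta(z)]')]'$; and applying the morphism property of $\beta$ plus the commutation of $\alpha$ and $\beta$ yields
\begin{equation*}
[\beta^2(x),[\beta(y),\alpha(z)]] \;=\; \bigl[\alpha\beta^2(x),[\alpha\beta^2(y),\alpha\beta^2(z)]'\bigr]'.
\end{equation*}
The two cyclic analogues reduce in the same way, and the three-term sum in \eqref{BiHomJacobi} becomes the ordinary Jacobi identity in $L$ evaluated at $\alpha\beta^2(x),\alpha\beta^2(y),\alpha\beta^2(z)$, which vanishes.

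There is no real obstacle here — the verification is a bookkeeping exercise. The only subtle point is making sure that the two twisting maps get pushed around in the correct order so that each ``inner'' variable receives one $\alpha$ and one $\beta$ and each ``outer'' bracketed pair is further twisted by $\beta$, matching the asymmetric placement of $\alpha$ and $\beta$ in the new bracket; the commutativity $\alpha\beta=\beta\alpha$ and the fact that both $\alpha,\beta$ respect $[\cdot,\cdot]'$ handle this bookkeeping uniformly.
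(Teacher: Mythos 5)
Your verification is correct: skew-symmetry reduces via $\alpha\beta=\beta\alpha$ to antisymmetry of $[\cdot,\cdot]'$, and each BiHom-Jacobi term collapses to $\bigl[\alpha\beta^{2}(x),[\alpha\beta^{2}(y),\alpha\beta^{2}(z)]'\bigr]'$ (and its cyclic analogues), so the sum vanishes by the ordinary Jacobi identity. The paper cites this proposition from \cite{GrazMakhlMeniniPanaite:bihom} without reproducing a proof, and your direct axiom-by-axiom check is exactly the standard argument given there.
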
	
\begin{exmpl}[\textbf{Heisengberg BiHom-Lie algebras}] \label{heisen}
Let $ (X,Y,Z) $	be a basis of a Heisenberg Lie algebra $ (\mathfrak{h}_{1},[\cdot,\cdot]') $ such that $$ [X,Y]'=Z,\, [X,Z]'=[Y,Z]'=0. $$
Let
$ \begin{pmatrix}
	b&0&0\\
	0&\frac{a}{b} &0\\
	0&0&a
\end{pmatrix} $  be the matrix of a linerar map $ \alpha\colon \mathfrak{h}_{1}\to \mathfrak{h}_{1} $ and  let
$ \begin{pmatrix}
	y&0&0\\
	0&\frac{x}{y} &0\\
	0&0&x
\end{pmatrix} $   be the matrix of a linerar map $ \beta\colon \mathfrak{h}_{1}\to \mathfrak{h}_{1} $ relative to a basis  $ (X,Y,Z) $ of $ \mathfrak{h}_{1} $. The  Yau twist $ H_{(\alpha,\beta)} $   of   $ \mathfrak{h}_{1} $  is called  Heisengberg BiHom-Lie algebra.   The bracket of Heisengberg BiHom-Lie algebra $ (\mathfrak{h}_{1},[\cdot,\cdot],\alpha,\beta) $ is gives by $ [X,Y]=b\frac{x}{y}Z $, $ [Y,X]=-y\frac{a}{b}Z $ and the other brother bracket are $ 0 $. For $ m> 1 $, we define
	Heisengberg BiHom-Lie algebra $ (\mathfrak{h}_{m},[\cdot,\cdot],\alpha,\beta) $ by
\begin{align*}
& [X_{i},Y_{i}]=b_{i}\frac{x}{y_{i}}Z,\quad [Y_{i},X_{i}]=-y_{i}\frac{a}{b_{i}}Z\quad  \forall i\in \{1,\dots,m\} \\
& \quad \quad \quad \quad \quad \quad \text{  and  the other brackets are  } 0;
\\
& \alpha=diag(b_{1},\dots,b_{m},\frac{a}{b_{1}},\dots,\frac{a}{b_{m}},a), \\
& \beta=diag(y_{1},\dots,y_{m},\frac{x}{y_{1}},\dots,\frac{x}{y_{m}},x) .
\end{align*}		
\end{exmpl}
\begin{prop}[\cite{Saadaoui:ClassmultiplsimpleBiHomLiealg}] \label{induced}
Let $ \left(G,[\cdot,\cdot],\alpha,\beta \right)  $ be a regular BiHom-Lie algebra.
Define the bilinear map $[\cdot,\cdot]'\colon G\times G\to G $ by
	\[ [x,y]'=[\alpha^{-1}(x),\beta^{-1}(y)],                           \]
	for all $ x,\, y\in G. $ Then $ \left(L,[\cdot,\cdot]'\right) $ is                                                                                                                                                                                                                                                                                                             a Lie algebra, which we call it the induced Lie algebra of  $ \left(L,[\cdot,\cdot],\alpha,\beta \right)  $.
\end{prop}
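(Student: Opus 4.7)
The plan is to verify the two Lie-algebra axioms for $(G,[\cdot,\cdot]')$ by reducing them to the BiHom skew-symmetry and the BiHom-Jacobi identity of $(G,[\cdot,\cdot],\alpha,\beta)$. The main tools are the commutativity $\alpha\beta=\beta\alpha$, which extends to $\alpha^{-1}\beta^{-1}=\beta^{-1}\alpha^{-1}$ by regularity, and the ability to freely substitute $\alpha^{-1}$/$\beta^{-1}$ images in the defining identities (for the Jacobi step, one also uses, as is customary in the regular setting, that $\alpha$ and $\beta$ commute with the bracket so that the same holds for their inverses).

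For skew-symmetry, I would rewrite
\[
[x,y]'=[\alpha^{-1}(x),\beta^{-1}(y)]=\bigl[\beta(\beta^{-1}\alpha^{-1}(x)),\alpha(\alpha^{-1}\beta^{-1}(y))\bigr],
\]
which is legal because $\alpha\beta=\beta\alpha$. Applying the BiHom skew-symmetry \eqref{skew} to this expression flips the two entries with a sign, and applying $\alpha\beta=\beta\alpha$ once more inside identifies the result with $-[\alpha^{-1}(y),\beta^{-1}(x)]=-[y,x]'$.

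For the Jacobi identity I would compute
\[
[x,[y,z]']'=[\alpha^{-1}(x),\beta^{-1}([\alpha^{-1}(y),\beta^{-1}(z)])]=[\alpha^{-1}(x),[\alpha^{-1}\beta^{-1}(y),\beta^{-2}(z)]],
\]
rewriting this (using $\alpha\beta=\beta\alpha$) as $[\beta^{2}(u),[\beta(v),\alpha(w)]]$ for the substitution $u=\beta^{-2}\alpha^{-1}(x)$, $v=\alpha^{-1}\beta^{-2}(y)$, $w=\alpha^{-1}\beta^{-2}(z)$; the cyclic terms $[y,[z,x]']'$ and $[z,[x,y]']'$ then correspond to the two cyclic permutations of the same $(u,v,w)$. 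Summing the three terms is exactly the left-hand side of the BiHom-Jacobi identity \eqref{BiHomJacobi} applied to $(u,v,w)$, which vanishes.

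The routine parts are the two substitutions and the cancellation of $\alpha$'s and $\beta$'s using commutativity. The one genuinely delicate step is pulling $\beta^{-1}$ inside an inner bracket in the Jacobi calculation, which relies on the bracket-compatibility of $\alpha,\beta$ carried over to their inverses; this is where the regularity hypothesis does the essential work, and this compatibility is the only nontrivial obstacle in the argument.
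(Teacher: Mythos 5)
Your argument is correct: it is the standard ``untwisting'' (inverse Yau twist) computation, and since the paper imports this proposition from \cite{Saadaoui:ClassmultiplsimpleBiHomLiealg} without reproducing a proof, there is nothing in the text to compare against line by line. The one point worth emphasizing is the hypothesis you yourself flag: the Jacobi step genuinely requires $\beta$ (hence $\beta^{-1}$) to satisfy the multiplicativity condition \eqref{multiplicative}, so that $\beta^{-1}([\alpha^{-1}(y),\beta^{-1}(z)])=[\alpha^{-1}\beta^{-1}(y),\beta^{-2}(z)]$ and the three cyclic terms assemble into \eqref{BiHomJacobi} applied to $(u,v,w)=(\alpha^{-1}\beta^{-2}(x),\alpha^{-1}\beta^{-2}(y),\alpha^{-1}\beta^{-2}(z))$; this is not part of the literal definition of a regular BiHom-Lie algebra but is supplied by the paper's standing convention that all algebras considered are multiplicative. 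Your skew-symmetry step correctly uses only $\alpha\beta=\beta\alpha$ together with \eqref{skew}, and needs no multiplicativity.
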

%\begin{defn}
%	The Lie algebra  $ \left(L,[\cdot,\cdot]'\right) $   is called the induced Lie algebra of  $ \left(L,[\cdot,\cdot],\alpha,\beta \right)  $.
%\end{defn}
\begin{prop}[\cite{Saadaoui:ClassmultiplsimpleBiHomLiealg}] \label{semisimple}
	The induced Lie algebra of the multiplicative simple  BiHom-Lie algebra is semisimple. There exist simple  ideal $ L_1 $  and an integer $ m\neq 2 $ such that \[ L=L_1\oplus\alpha(L_1)\oplus\cdots \oplus\alpha^{m-1}(L_1)=L_1\oplus\beta(L_1)\oplus\cdots \oplus\beta^{m-1}(L_1)    .\]	
	%	$ L_1,\,       \cdots,\, L_m  $  	
	% and  two cycles $ \sigma=(1\,\cdots\, m-1),\,\sigma'=(1\,i_1\,\cdots\, i_m)\in S_m $ such that $ L=L_1\oplus\cdots\oplus L_m  $, $ \alpha(L_i)=L_{\sigma(i)} $, $ \beta(L_i) =L_{\sigma'(i)}$ and $ \sigma\neq \sigma' $   (i.e $ L=L_1\oplus\alpha(L_1)\oplus\cdots \alpha^{m-1}(L_1)  $).
\end{prop}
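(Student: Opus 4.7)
The plan is to split the proposition into three parts: (a) the induced Lie algebra $(L,[\cdot,\cdot]')$ is semisimple, (b) extract a simple ideal $L_1$ and show the sum of its $\alpha$-translates is the whole space, and (c) derive the parallel decomposition under $\beta$ with the same integer $m\neq 2$.

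First I would establish a dictionary between ideals. Since $L$ is multiplicative, a direct check shows $\alpha$ and $\beta$ are automorphisms of the induced Lie algebra: using $\alpha\beta=\beta\alpha$, one has $\alpha([x,y]')=\alpha[\alpha^{-1}(x),\beta^{-1}(y)]=[x,\alpha\beta^{-1}(y)]=[\alpha^{-1}\alpha(x),\beta^{-1}\alpha(y)]=[\alpha(x),\alpha(y)]'$, and similarly for $\beta$. Consequently, an $(\alpha,\beta)$-invariant Lie ideal of $(L,[\cdot,\cdot]')$ is the same thing as a BiHom-Lie ideal of $(L,[\cdot,\cdot],\alpha,\beta)$: the bracket identity $[I,L]=[\alpha^{-1}\alpha(I),\beta^{-1}\beta(L)]=\alpha([\alpha^{-1}(\alpha(I)),\beta^{-1}(\beta(L))])'$ translates stability under one bracket to stability under the other once $\alpha,\beta$ preserve $I$.

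For part (a), the solvable radical $R=\mathrm{Rad}(L,[\cdot,\cdot]')$ is the unique maximal solvable ideal and is therefore preserved by every automorphism, hence by both $\alpha$ and $\beta$. By the dictionary above, $R$ is a BiHom-Lie ideal of $L$. Simplicity of $L$ forces $R=\{0\}$ or $R=L$; the latter is ruled out because a solvable induced Lie algebra would make $L$ abelian as a BiHom-Lie algebra (solvability propagates to the twisted bracket), contradicting the definition of simple. Hence $L'=(L,[\cdot,\cdot]')$ is semisimple.

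For part (b), decompose $L'=\bigoplus_{i\in\mathcal{I}}S_i$ into simple Lie ideals. Pick $L_1=S_{i_0}$; since $\alpha$ is a Lie automorphism, each $\alpha^j(L_1)$ is again one of the $S_i$, and by finite-dimensionality there is a minimal $m\geq 1$ with $\alpha^m(L_1)=L_1$. The sum $M=L_1+\alpha(L_1)+\cdots+\alpha^{m-1}(L_1)$ is direct (distinct simple ideals of a semisimple Lie algebra intersect trivially), is $\alpha$-stable by construction, and is a Lie ideal of $L'$. To promote $M$ to a BiHom-Lie ideal I need $\beta(M)\subset M$: since $\beta$ permutes the simple components and commutes with $\alpha$, the $\beta$-image of the $\alpha$-orbit of $L_1$ is an $\alpha$-orbit; if $\beta(L_1)$ landed outside $M$, then $M\oplus\beta(M)+\cdots$ would be a proper nonzero BiHom-Lie ideal, contradicting simplicity after one verifies $(\alpha,\beta)$-stability. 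Hence $\beta(M)\subset M$, and by simplicity of $L$ we get $M=L$, giving the $\alpha$-decomposition.

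For part (c), the symmetric argument with $\beta$ in place of $\alpha$ (starting from the same simple component $L_1$, since the roles of $\alpha,\beta$ are interchangeable in semisimplicity of $L'$) produces $L=L_1\oplus\beta(L_1)\oplus\cdots\oplus\beta^{m'-1}(L_1)$ for some minimal $m'$. Equality $m=m'$ follows by counting: both decompositions have the same number of simple summands because the $\alpha$- and $\beta$-orbits of $L_1$ inside the set $\{S_i\}$ generate the same $\langle\alpha,\beta\rangle$-orbit, which by simplicity of the BiHom-Lie algebra must be all of $\{S_i\}$, and the commutativity $\alpha\beta=\beta\alpha$ together with the cyclic orbit structure forces the orbit lengths to coincide.

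The main obstacle, and the step I expect to require the most care, is ruling out $m=2$. If $m=2$ then $L=L_1\oplus\alpha(L_1)$ with $\alpha$ swapping the two summands and $[L_1,\alpha(L_1)]'=0$ in the induced bracket (distinct simple ideals commute). Translating back via $[x,y]=[\alpha(x),\beta(y)]'$, the BiHom-bracket across the summands would collapse in a way that produces a proper nonzero $(\alpha,\beta)$-stable ideal, namely one of the $\alpha$-eigenspaces of the involution $\alpha|_{L_1\oplus\alpha(L_1)}$, contradicting simplicity of $L$. Making this contradiction airtight, while keeping track of the interplay between the two twisting maps, is the delicate point of the argument.
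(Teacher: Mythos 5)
First, a caveat: the paper itself contains no proof of this proposition --- it is imported verbatim from \cite{Saadaoui:ClassmultiplsimpleBiHomLiealg} --- so your argument can only be judged on its own merits. Your part (a) is essentially sound, modulo one imprecision: from ``the radical equals $L$'' you should not pass to ``$L$ is abelian'' via the slogan that solvability propagates. The clean route is that solvability of the induced Lie algebra forces $[L,L]'\neq L$, and since $[L,L]=[\alpha(L),\beta(L)]'=[L,L]'$ is an $(\alpha,\beta)$-invariant proper Lie ideal, hence a proper BiHom-ideal, simplicity gives $[L,L]=0$, contradicting non-abelianness. Your dictionary between $(\alpha,\beta)$-invariant Lie ideals of the induced algebra and BiHom-ideals of $L$ is correct and is the right tool.

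The decomposition step, however, has a genuine gap at exactly the point you wave through with ``after one verifies $(\alpha,\beta)$-stability''. Through your dictionary, simplicity of $L$ is equivalent to transitivity of the joint $\langle\alpha,\beta\rangle$-action on the set of simple components of the induced Lie algebra. The claimed contradiction --- that if $\beta(L_1)$ lay outside $M$ then $M+\beta(M)+\cdots$ would be a proper nonzero BiHom-ideal --- does not exist: that sum is the full orbit ideal and equals $L$, which is precisely what transitivity asserts and is perfectly consistent with $\beta(M)\not\subset M$. Commutativity of $\alpha$ and $\beta$ only gives that $\beta$ carries the $\alpha$-orbit of $L_1$ to the $\alpha$-orbit of $\beta(L_1)$, not to itself. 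Nothing you have established excludes, for instance, four simple components on which $\alpha$ acts as the permutation $(1\,2)(3\,4)$ and $\beta$ as $(1\,3)(2\,4)$: the joint action is transitive, so the corresponding Yau twist is a simple multiplicative BiHom-Lie algebra by your own criterion, yet every $\alpha$-orbit has length two and $M=L_1\oplus\alpha(L_1)$ is proper. A six-component configuration with $\alpha$ acting by $+2$ and $\beta$ by $+3$ on $\mathbb{Z}/6\mathbb{Z}$ likewise defeats the ``counting'' argument for $m=m'$, since commuting permutations generating a transitive abelian action can have orbits of different lengths. Finally, the exclusion of $m=2$, which you yourself flag as the delicate point, is only gestured at: an eigenspace of $\alpha$ is not in general stable under the bracket, and no contradiction is actually derived. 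So your proposal proves semisimplicity of the induced Lie algebra, but the orbit decomposition, the coincidence of the two lengths, and $m\neq 2$ all require input beyond transitivity of the joint action; that input is the actual content of \cite{Saadaoui:ClassmultiplsimpleBiHomLiealg} and is missing here.
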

\begin{prop}\label{regular}
	Any finite-dimensional multiplicative  simple BiHom-Lie algebra is regular.
\end{prop}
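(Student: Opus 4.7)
The plan is to exhibit $\ker\alpha$ and $\ker\beta$ as ideals of $L$, apply simplicity to force each kernel to be either $\{0\}$ or all of $L$, and then rule out the latter; finite-dimensionality then upgrades injectivity to bijectivity.

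First I would verify that $\ker\alpha$ is an ideal. The subspace $\ker\alpha$ is $\alpha$-invariant trivially, and $\beta$-invariant since $\alpha\circ\beta=\beta\circ\alpha$ gives $\alpha(\beta(x))=\beta(\alpha(x))=0$ for $x\in\ker\alpha$. Multiplicativity of $\alpha$ yields $\alpha([x,y])=[\alpha(x),\alpha(y)]=0$ whenever $x\in\ker\alpha$ and $y\in L$, and symmetrically for $[y,x]$, so $\ker\alpha$ absorbs brackets on either side. The same argument applied to $\beta$ shows $\ker\beta$ is an ideal.

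By the simplicity hypothesis each kernel equals $\{0\}$ or $L$. The main obstacle is ruling out the case $\ker\alpha=L$ (equivalently $\alpha=0$), and similarly $\ker\beta=L$. This is delicate because $\alpha=0$ makes the skew-symmetry relation \eqref{skew} and the BiHom-Jacobi identity \eqref{BiHomJacobi} vacuous, and trivially satisfies the multiplicativity condition \eqref{multiplicative} for $\alpha$, so the exclusion cannot come from the axioms alone. To close this gap I would appeal to the structural decomposition provided by Proposition \ref{semisimple}: a multiplicative simple BiHom-Lie algebra admits a decomposition $L=L_1\oplus\alpha(L_1)\oplus\cdots\oplus\alpha^{m-1}(L_1)$ with $L_1$ a simple ideal of the induced Lie algebra, in which $\alpha$ cyclically shifts the summands and hence cannot vanish unless $m=1$; in the remaining $m=1$ case, $L=L_1$ must carry a genuine simple Lie structure compatible with the BiHom twist, which combined with the non-abelian clause of simplicity forces $\alpha\neq 0$. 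The companion decomposition $L=L_1\oplus\beta(L_1)\oplus\cdots\oplus\beta^{m-1}(L_1)$ rules out $\beta=0$ in the same way.

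Once $\ker\alpha=\ker\beta=\{0\}$, the rank–nullity theorem yields bijectivity of both maps on the finite-dimensional space $L$, so $L$ is regular. The essential content of the proof is therefore the kernel-is-an-ideal computation together with the non-circular exclusion of $\alpha=0$ and $\beta=0$; the remainder is purely formal.
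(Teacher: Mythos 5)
Your first paragraph --- showing that $\ker\alpha$ and $\ker\beta$ are $\alpha$- and $\beta$-invariant via \eqref{commute} and absorb brackets on both sides via \eqref{multiplicative}, hence are ideals --- is exactly the paper's proof; the published argument consists of this single observation (plus the implicit rank--nullity step) and does not address the case $\ker\alpha=L$ at all. So up to that point you have reproduced, and indeed made more explicit, what the paper does.

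The problem is the step you add to exclude $\ker\alpha=L$. Your appeal to Proposition \ref{semisimple} is circular: that proposition speaks of the induced Lie algebra of $L$ and of the decomposition $L=L_1\oplus\alpha(L_1)\oplus\cdots\oplus\alpha^{m-1}(L_1)$, but the induced Lie algebra is defined (Proposition \ref{induced}) by $[x,y]'=[\alpha^{-1}(x),\beta^{-1}(y)]$ and therefore exists only for \emph{regular} BiHom-Lie algebras --- which is precisely what Proposition \ref{regular} is supposed to establish. In the paper's logical order, Proposition \ref{regular} is what licenses applying Proposition \ref{semisimple} to an arbitrary finite-dimensional multiplicative simple BiHom-Lie algebra, so the dependence runs the other way. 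Your fallback for the case $m=1$ ("$L=L_1$ must carry a genuine simple Lie structure compatible with the BiHom twist") suffers from the same defect, since the Lie structure in question is again the induced bracket. You correctly diagnosed that $\alpha=0$ satisfies \eqref{skew}, \eqref{BiHomJacobi} and \eqref{multiplicative} vacuously, so the exclusion cannot come from the axioms alone; but the tool you reach for presupposes the conclusion, and the claim that your exclusion is "non-circular" is not justified. To repair this you would need either a strengthened notion of simplicity or an argument, independent of the induced Lie algebra, that $\alpha$ and $\beta$ cannot vanish on a simple algebra; the paper itself offers neither and simply passes over the degenerate case.
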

\begin{proof}
The statement holds since $ \ker \alpha $  and $ \ker \beta $  are ideals of the  simple BiHom-Lie algebra $ \left(L,[\cdot,\cdot],\alpha,\beta  \right)  $.
%\qed
\end{proof}
\begin{prop}\label{nilpotent}
 A regular multiplicative  BiHom-Lie algebra $ \left(G,[\cdot,\cdot],\alpha,\beta \right)  $ is nilpotent if and only if  the induced Lie  algebra  $ \left(G,[\cdot,\cdot]'\right) $ is nilpotent.
\end{prop}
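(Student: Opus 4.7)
The plan is to prove the equivalence by showing that the descending central series of the BiHom-Lie algebra $\left(G,[\cdot,\cdot],\alpha,\beta\right)$ and that of its induced Lie algebra $\left(G,[\cdot,\cdot]'\right)$ coincide term by term. Once this is established, nilpotency of one is clearly equivalent to nilpotency of the other.

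First I would verify that the twisting maps $\alpha$ and $\beta$ are Lie-algebra automorphisms of $(G,[\cdot,\cdot]')$. This is a short computation using multiplicativity of $\alpha,\beta$ with respect to $[\cdot,\cdot]$ together with $\alpha\circ\beta=\beta\circ\alpha$: for instance,
\[
\alpha([x,y]')=\alpha([\alpha^{-1}(x),\beta^{-1}(y)])=[\alpha\alpha^{-1}(x),\alpha\beta^{-1}(y)]=[x,\beta^{-1}\alpha(y)]=[\alpha(x),\alpha(y)]',
\]
and similarly for $\beta$. Since $\alpha$ and $\beta$ are bijective, so are $\alpha^{-1}$ and $\beta^{-1}$, and all four maps are automorphisms of $(G,[\cdot,\cdot]')$. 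Consequently each term $L^{k}_{\mathrm{Lie}}$ of the Lie descending central series of $(G,[\cdot,\cdot]')$ is stable (setwise) under $\alpha^{\pm 1}$ and $\beta^{\pm 1}$, by an obvious induction on $k$.

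Then I would prove by induction on $k$ that $G^{k}=L^{k}_{\mathrm{Lie}}$, where $G^{k}$ denotes the BiHom descending central series. The base case $k=0$ is trivial. For the inductive step, the identity $[x,y]=[\alpha(x),\beta(y)]'$ (a direct reformulation of $[\cdot,\cdot]'=[\alpha^{-1}(\cdot),\beta^{-1}(\cdot)]$) gives the inclusion
\[
G^{k}=[G,G^{k-1}]=[G,L^{k-1}_{\mathrm{Lie}}]\subseteq [\alpha(G),\beta(L^{k-1}_{\mathrm{Lie}})]'\subseteq [G,L^{k-1}_{\mathrm{Lie}}]'=L^{k}_{\mathrm{Lie}},
\]
using the stability of $L^{k-1}_{\mathrm{Lie}}$ under $\beta$. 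The reverse inclusion uses the dual identity $[x,y]'=[\alpha^{-1}(x),\beta^{-1}(y)]$ together with stability of $G^{k-1}=L^{k-1}_{\mathrm{Lie}}$ under $\beta^{-1}$.

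Once $G^{k}=L^{k}_{\mathrm{Lie}}$ is established for all $k$, the conclusion is immediate: $G^{n}=\{0\}$ if and only if $L^{n}_{\mathrm{Lie}}=\{0\}$. I do not foresee a genuine obstacle; the main point requiring attention is making sure, in the two inclusions of the inductive step, to move factors of $\alpha^{\pm 1}$ and $\beta^{\pm 1}$ through $G$ and through $L^{k-1}_{\mathrm{Lie}}$ without leaving those subspaces, which is exactly what the invariance proved in the first step provides.
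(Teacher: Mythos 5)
Your proof is correct. The paper states this proposition without giving any proof, so there is nothing to compare against; your argument — showing that $\alpha^{\pm1},\beta^{\pm1}$ are automorphisms of the induced Lie algebra $(G,[\cdot,\cdot]')$, hence preserve each term of its lower central series, and then using $[x,y]=[\alpha(x),\beta(y)]'$ and $[x,y]'=[\alpha^{-1}(x),\beta^{-1}(y)]$ to identify $G^{k}$ with $L^{k}_{\mathrm{Lie}}$ term by term — is the natural one, uses both hypotheses (multiplicativity and regularity) exactly where needed, and in fact proves slightly more than asked, namely that the two descending central series coincide, so the nilpotency classes agree as well.
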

\begin{prop}\label{solvable}
 A regular multiplicative  BiHom-Lie algebra $ \left(G,[\cdot,\cdot],\alpha,\beta \right)  $ is solvable if and only if  the induced Lie  algebra  $ \left(G,[\cdot,\cdot]'\right) $ is solvable. 	
\end{prop}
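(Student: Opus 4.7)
The plan is to show that the BiHom derived series $G^{(k)}$ and the classical derived series $G^{(k)'}$ of the induced Lie algebra coincide as subspaces of $G$ for every $k\geq 0$. Once this equality is proved, the equivalence of solvability is immediate: $G^{(n)}=\{0\}$ precisely when $G^{(n)'}=\{0\}$.

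The first step is to observe that $\alpha$ and $\beta$ are Lie algebra automorphisms of $(G,[\cdot,\cdot]')$ as well. Indeed, using $\alpha\beta=\beta\alpha$ together with multiplicativity,
\[
\alpha([x,y]')=\alpha([\alpha^{-1}(x),\beta^{-1}(y)])=[\alpha(x),\alpha(y)]',
\]
and likewise for $\beta$. Moreover, since $\alpha$ and $\beta$ are bijective, the change of variables $(x,y)\mapsto(\alpha^{-1}(x),\beta^{-1}(y))$ is a bijection of $G\times G$, so the sets of brackets $\{[x,y]:x,y\in G\}$ and $\{[x,y]':x,y\in G\}$ coincide. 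Taking linear spans yields $G^{(1)}=G^{(1)'}$.

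The main step is an induction on $k$ establishing simultaneously the equality $G^{(k)}=G^{(k)'}$ and the invariance $\alpha(G^{(k)})=G^{(k)}=\beta(G^{(k)})$. The invariance is preserved at each step because for any $\alpha$-invariant subspace $H$, multiplicativity and bijectivity of $\alpha$ give $\alpha([H,H])=[\alpha(H),\alpha(H)]=[H,H]$, and similarly for $\beta$. Once $H=G^{(k)}=G^{(k)'}$ is both $\alpha$- and $\beta$-invariant, the same reparameterization is a bijection of $H\times H$, and therefore
\[
[H,H]'=\{[\alpha^{-1}(x),\beta^{-1}(y)]:x,y\in H\}=\{[u,v]:u,v\in H\}=[H,H],
\]
whence $G^{(k+1)'}=G^{(k+1)}$ after spanning.

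I do not foresee a real obstacle; the only technical point is to carry the $\alpha,\beta$-invariance through the induction so that the reparameterization argument remains valid on each derived subspace. Regularity of $(G,[\cdot,\cdot],\alpha,\beta)$ is essential for the bijection of brackets, and multiplicativity is what propagates the invariance through the derived terms, mirroring the strategy used for Proposition \ref{nilpotent}.
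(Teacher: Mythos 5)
Your argument is correct: the base case, the reparameterization $\{[\alpha^{-1}(x),\beta^{-1}(y)]:x,y\in H\}=\{[u,v]:u,v\in H\}$ on an $\alpha,\beta$-invariant subspace $H$, and the propagation of invariance via multiplicativity together show $G^{(k)}=G^{(k)'}$ for all $k$, which gives the equivalence of solvability. The paper states this proposition without proof, and your induction is exactly the natural argument one would supply (and the same one needed for Proposition \ref{nilpotent}), so there is nothing to flag.
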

\begin{defn}
	A BiHom-Lie algebra $ L $ is said to  be  decomposable if it can be decomposed into the  direct  sum of two or more nonzero ideals. We say  $ L $ is indecompasable if it  is  not decomposable.
\end{defn}
\begin{prop}
Every  decomposable $ 2 $-dimensional multiplicative BiHom-Lie algebra $ L $ is nonregular and it satisfies $ L=[L,L]\oplus C(L) $.
\end{prop}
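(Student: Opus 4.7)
The plan is to use the dimension constraint to reduce to a very rigid normal form, then read off non-regularity from the skew-symmetry axiom. Since $\dim L = 2$ and $L$ is a direct sum of at least two nonzero ideals, we must have $L = I_1 \oplus I_2$ with each $I_j$ one-dimensional. Picking a generator $e_j$ of $I_j$, the subalgebra condition on $I_j$ yields $\alpha(e_j) = a_j e_j$ and $\beta(e_j) = b_j e_j$, so $\alpha, \beta$ are diagonal in the basis $(e_1, e_2)$. The two-sided ideal property further gives $[I_i, I_j] \subseteq I_i \cap I_j = \{0\}$ for $i \neq j$, so the only possibly nonzero brackets are $[e_i, e_i] = c_i e_i$ for $i = 1, 2$.

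Non-regularity is then extracted by applying the BiHom skew-symmetry identity \eqref{skew} with $x = y = e_i$, which gives $2[\beta(e_i), \alpha(e_i)] = 0$, and hence $a_i b_i c_i = 0$ for each $i$. If $\alpha$ and $\beta$ were both bijective, every $a_i$ and every $b_i$ would be nonzero, forcing $c_1 = c_2 = 0$ and collapsing the bracket entirely. Setting aside this degenerate abelian situation (which is handled trivially in the classification context), at least one $c_i$ is nonzero, forcing $a_i = 0$ or $b_i = 0$; hence $\alpha$ or $\beta$ has a nontrivial kernel and $L$ is nonregular.

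For the decomposition $L = [L, L] \oplus C(L)$, I would observe from the bracket relations that $[L, L] = \mathrm{span}\{e_i : c_i \neq 0\}$, and then compute $[x, e_j]$ for a general $x = \lambda e_1 + \mu e_2$ to deduce $C(L) = \mathrm{span}\{e_i : c_i = 0\}$. These two subspaces are visibly complementary in $L$ regardless of how many of the $c_i$ vanish, yielding the claimed direct sum decomposition. The main obstacle is the careful use of BiHom skew-symmetry to isolate the relation $a_i b_i c_i = 0$; the rest of the argument is a short casework on the scalars $c_i$.
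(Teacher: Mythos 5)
Your argument is correct and is genuinely different from how the paper supports this statement. The paper gives no direct proof here: the proposition is effectively justified only later, as a by-product of the exhaustive computer-assisted classification in Section \ref{sec:classmultiplic2dimBiHomLiealg}, whose corollary identifies the decomposable $2$-dimensional algebras as $L_{3}^{1}$, $L_{1}^{2}$, $L_{1}^{5}$, $L_{1}^{9}$, on each of which non-regularity and $L=[L,L]\oplus C(L)$ can be read off. Your route is structural and self-contained: writing $L=I_1\oplus I_2$ with both ideals one-dimensional forces $\alpha,\beta$ diagonal and kills all mixed brackets (since $[I_i,I_j]\subseteq I_i\cap I_j=\{0\}$), and then the twisted skew-symmetry \eqref{skew} with $x=y=e_i$ yields $a_ib_ic_i=0$, which is exactly the relation every one of the paper's four listed algebras satisfies. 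What your approach buys is independence from the classification (so the proposition can legitimately sit in Section \ref{sec:defprelimres}, before Section \ref{sec:classmultiplic2dimBiHomLiealg}); what the paper's approach buys is that the proposition comes for free once the classification is done. Note also that you never need the multiplicativity conditions \eqref{multiplicative}, so your argument proves slightly more than stated.

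One point deserves more care than your phrase ``handled trivially in the classification context'' suggests. In the abelian case $c_1=c_2=0$ the non-regularity claim is actually \emph{false}: the trivial bracket with $\alpha=\beta=\mathrm{id}$ gives a decomposable, multiplicative, regular $2$-dimensional BiHom-Lie algebra. So the abelian case is not handled, it must be excluded; the proposition implicitly assumes $L$ is non-trivial, consistent with the paper's restriction to non-trivial algebras in Section \ref{sec:classmultiplic2dimBiHomLiealg}. Your proof correctly isolates exactly where this hypothesis enters (the existence of some $c_i\neq 0$), and the decomposition $L=[L,L]\oplus C(L)$ survives even in the abelian case, so only the non-regularity half needs the caveat. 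Stating that exclusion explicitly would close the one loose end in your write-up.
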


In this work, $ (L,[\cdot,\cdot],\alpha,\beta) $ denotes a multiplicative BiHom–Lie algebra (over field $ \C $), $ I=\ker \alpha + \ker \beta $,
$ G $ a BiHom-Lie subalgebra of $ L $ satisfying $ L=I\oplus G $ (if it exists),
 and $ \Omega =\{f\in End(L)\mid f\circ \alpha=\alpha\circ f,f\circ\beta=\beta\circ f\}   $.

%%%%%%%%%%%%%%%%%%%%%%%%%%%%%%%%%%%%%%%%
%%%%%%%%%%%%%%%%%%%%%%%
\section{Generalized derivations of BiHom-Lie algebras}
\label{sec:genderivBiHomLiealg}
%Let $ (L,[\cdot,\cdot],\alpha,\beta) $ be a BiHom–Lie algebra.
%We set
%\[\Omega =\{f\in End(L)\mid f\circ \alpha=\alpha\circ f,f\circ\beta=\beta\circ f\}    .\]
\begin{defn}[\cite{Yongsheng}]
	For any integer $ k,\, l $, a linear map $ D\colon L\to L $
	is called an
	$ \alpha^k\beta^l $-derivation of the BiHom-Lie  algebra $(L,[\cdot,\cdot ],\alpha,\beta)$,
	if
	$D\in \Omega$ and
	\begin{eqnarray*}
		D([x,y])&=&[D(x),\alpha^{k}\beta^{l}(y)]+[\alpha^{k}\beta^{l}(x),D(y)],
	\end{eqnarray*}
	for all  $ x,y \in L$. The set of all $ \alpha^k\beta^l $-derivations of a BiHom-Lie  algebra $(L,[\cdot,\cdot ],\alpha,\beta)$  is denoted by $Der_{\alpha^k\beta^l}(L)  ,$ and 	
 we denote by  $\displaystyle Der(L) $ the vector space spanned by the set $ \{d\in Der_{\alpha^k\beta^l}(L)\mid k,l\in \N\} $.	
	%We denote by $Der(L)  $ the vector space spanned by the set $ \{   d\mid  d\in ,v\in V     \} $
	% Denote by $\displaystyle Der(L)=\bigoplus_{0\leq k,l} Der_{\alpha^k\beta^l}(L)$ the set of derivations of the Bihom-Lie algebra $ (L,[\cdot,\cdot],\alpha,\beta) $.
\end{defn}

\begin{defn} Let $ (L,[\cdot,\cdot],\alpha,\beta) $ be a BiHom–Lie algebra and $\lambda,\mu,\gamma $
	 elements of $\C$.
	A linear map $d\in \Omega$ is a generalized 	$ \alpha^k\beta^l $-derivation or a	
	$ (\lambda,\mu,\gamma)$-$ \alpha^k\beta^l $-derivation of $L$ if for all $x,y\in L$ we have\[ \lambda\,	d([x,y])=\mu\, [d(x),\alpha^{k}\beta^{l}(y)]+ \gamma\,[\alpha^{k}\beta^{l}(x),d(y)]. \]
%	\textbf{ATTENTION THE SUM IS NOT  necessarily dierct}
	We denote the set of all $ (\lambda,\mu,\gamma)$-$ \alpha^k\beta^l $-derivations by $\displaystyle  Der_{\alpha^k\beta^l}^{(\lambda,\mu,\gamma)}(L)$
	 and $\displaystyle Der^{(\lambda,\mu,\gamma)}(L) $ the vector space spanned by  $ \{d\in Der_{\alpha^k\beta^l}^{(\lambda,\mu,\gamma)}(L)\mid k,l\in \N\} $.
	
	  %the set of generalized deivation of $ L $.
\end{defn}
\begin{lem}
For any $ D\in  Der_{\alpha^k\beta^l}^{(\lambda,\mu,\gamma)}(L)$ and $ D'\in  Der_{\alpha^s\beta^t}^{(\lambda',\mu',\gamma')}(L)$, their usual commutator defined by
	\begin{equation} \label{lieBracket}
	[D,D']'=D\circ D'-D'\circ D,
	\end{equation}	
	satisfies
	$[D,D']'\in  Der_{\alpha^{k+s}\beta^{l+t}}^{(\lambda\lambda',\mu\mu',\gamma\gamma')}(L). $
	%  $\left(  GDer(L),[\cdot,\cdot]' \right) $  is a Lie algebra.
\end{lem}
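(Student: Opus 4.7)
The plan is to verify the two conditions of being an $\alpha^{k+s}\beta^{l+t}$-derivation of type $(\lambda\lambda',\mu\mu',\gamma\gamma')$ directly from the definitions. First I would check that $[D,D']'$ lies in $\Omega$. Since both $D$ and $D'$ commute with $\alpha$ and $\beta$, their composites $D\circ D'$ and $D'\circ D$ do as well, hence so does their difference. This also ensures that $[D,D']'$ commutes with the iterates $\alpha^{k+s}\beta^{l+t}$ that will appear on the right-hand side.

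Next I would compute $\lambda\lambda'\,[D,D']'([x,y])$ by expanding $D\circ D'([x,y])$ and $D'\circ D([x,y])$ separately. Writing $\lambda' D'([x,y])=\mu'[D'(x),\alpha^s\beta^t(y)]+\gamma'[\alpha^s\beta^t(x),D'(y)]$ and then applying $\lambda D$ to each of the two brackets on the right using the $(\lambda,\mu,\gamma)$-$\alpha^k\beta^l$-derivation property of $D$, I would obtain a sum of four terms. Here the commutation relations $D\circ\alpha^s\beta^t=\alpha^s\beta^t\circ D$ and $\alpha^k\beta^l\circ D'=D'\circ\alpha^k\beta^l$, which hold because $D,D'\in\Omega$, are used to turn things like $[\alpha^k\beta^l(D'(x)),D(\alpha^s\beta^t(y))]$ into $[D'(\alpha^k\beta^l(x)),\alpha^s\beta^t(D(y))]$ so that $\alpha^{k+s}\beta^{l+t}$ appears cleanly. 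I would perform the symmetric computation for $\lambda\lambda' D'D([x,y])$ with the roles of $D,D'$ exchanged, again obtaining four terms.

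Finally I would subtract the two expansions. The two ``diagonal'' contributions produce exactly $\mu\mu'\bigl[(DD'-D'D)(x),\alpha^{k+s}\beta^{l+t}(y)\bigr]$ and $\gamma\gamma'\bigl[\alpha^{k+s}\beta^{l+t}(x),(DD'-D'D)(y)\bigr]$, which are the desired terms, while the two ``cross'' contributions $\mu'\gamma[D'\alpha^k\beta^l(x),\alpha^s\beta^t D(y)]+\gamma'\mu[\alpha^s\beta^t D(x),D'\alpha^k\beta^l(y)]$ appear in both expansions with identical coefficients and cancel.

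The main obstacle is purely bookkeeping: making sure each cross term in the expansion of $DD'([x,y])$ has a counterpart with the same coefficient in the expansion of $D'D([x,y])$, which requires careful use of the commutation between $D,D'$ and the twisting maps $\alpha,\beta$. Once the commutativity inside $\Omega$ is invoked correctly, the identification of coefficients is routine and the cancellation yields the claim.
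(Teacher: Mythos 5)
Your proof is correct and is essentially the argument the paper has in mind: the paper simply defers to the analogous direct computation for ordinary $\alpha^k\beta^l$-derivations in \cite[Lemma 3.2]{Yongsheng}, and your expansion of $\lambda\lambda' DD'([x,y])$ and $\lambda\lambda' D'D([x,y])$ into four terms each, with the cross terms cancelling via the commutation relations from $\Omega$, is exactly that computation carried out in the general $(\lambda,\mu,\gamma)$-weighted setting. Nothing further is needed.
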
	
\begin{proof}
The proof is similar to the one of $ Der(L) $ in \cite[Lemma 3.2]{Yongsheng}.	
%\qed
\end{proof}
%\begin{prop}
%	$ \left(Der(L),[\cdot,\cdot]' \right)  $   is a Lie algebra.
%\end{prop}
%Now, we consider the subspace  \[ IDer_{\alpha^k\beta^l}^{(\lambda,\mu,\gamma)}(L)=\{d\in  \Omega  \mid \lambda\,	d([x,u])=\mu\, [d(x),\alpha^{k}\beta^{l}(u)]+ \gamma\,[\alpha^{k}\beta^{l}(x),d(u)], \forall x\in L,u\in L^2   \}.\]
%%%%%%%%%%%%%%%%%%%%%%%%%%%%%
Let  us now classify the possible values of $ \lambda,\mu,\gamma \in\C $ for a linear map $ d\colon G\to G $ to be a $ (\lambda,\mu,\gamma)  $-$ \alpha^k\beta^l $-derivation of $ G $
\begin{lem}
	Let $ (G,[\cdot,\cdot],\alpha,\beta) $ be a  BiHom–Lie algebra
	such that the maps $\alpha$ and $\beta$ are surjective. Let  $\lambda,\mu,\gamma $
	be elements of $\C$.
	\begin{enumerate}[label=\upshape{\arabic*)},left=5pt]
		\item If $\lambda\neq 0$ and $\mu^2\neq \gamma^2$. Then $\displaystyle Der_{\alpha^k\beta^l}^{(\lambda,\mu,\gamma)}(G)=\displaystyle Der_{\alpha^k\beta^l}^{(\frac{ \lambda}{\mu+\gamma},1,0)}(G)$.
		\item If $\lambda\neq 0$, $\mu\neq 0$ and $\gamma=-\mu$. Then\\ $\displaystyle Der_{\alpha^k\beta^l}^{(\lambda,\mu,\gamma)}(G)=\displaystyle Der_{\alpha^k\beta^l}^{( 1,0,0)}(G)\cap Der_{\alpha^k\beta^l}^{( 0,1,-1)}(G)=Der_{\alpha^k\beta^l}^{( 1,1,-1)}(G)$.
		\item If $\lambda\neq 0$, $\mu=\gamma$ and $\mu\neq 0 $. Then $\displaystyle Der_{\alpha^k\beta^l}^{(\lambda,\mu,\gamma)}(G)=\displaystyle Der_{\alpha^k\beta^l}^{(\frac{ \lambda}{\mu},1,1)}(G)$.
		\item If $\lambda\neq 0$, $\mu=\gamma=0$. Then $\displaystyle Der_{\alpha^k\beta^l}^{(\lambda,\mu,\gamma)}(G)=\displaystyle Der_{\alpha^k\beta^l}^{(1,0,0)}(G)$.
		\item If $\lambda= 0$ and $\mu^2\neq \gamma^2$. Then $\displaystyle Der_{\alpha^k\beta^l}^{(\lambda,\mu,\gamma)}(G)=\displaystyle Der_{\alpha^k\beta^l}^{(0,1,0)}(G)$.
		\item If $\lambda= 0$, $ \mu \neq0 $ and $\mu= \gamma$. Then $\displaystyle Der_{\alpha^k\beta^l}^{(\lambda,\mu,\gamma)}(G)=\displaystyle Der_{\alpha^k\beta^l}^{(0,1,1)}(G)$.
		\item If $\lambda= 0$ and $\mu= -\gamma$. Then $\displaystyle Der_{\alpha^k\beta^l}^{(\lambda,\mu,\gamma)}(G)=\displaystyle Der_{\alpha^k\beta^l}^{(0,1,-1)}(G)$.
	\end{enumerate}	
\end{lem}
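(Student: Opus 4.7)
The plan is to split the seven cases into two groups: the ``diagonal'' cases (iii), (iv), (vi), (vii), where $\mu$ and $\gamma$ already satisfy the asserted ratio and the equivalence reduces to multiplying or dividing the defining identity by a single non-zero scalar; and the cases (i), (ii), (v), which all share the hypothesis $\mu\ne\gamma$ and require a genuine argument. I would dispose of the trivial group first in one sentence, then concentrate on the substantive cases.

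For the substantive cases I would start from the defining identity
\begin{equation*}
\lambda\,d([x,y]) = \mu\,[d(x),\alpha^k\beta^l(y)] + \gamma\,[\alpha^k\beta^l(x),d(y)],
\end{equation*}
and exchange $x$ and $y$ to obtain a second relation. The heart of the argument is to rewrite this second relation in terms of the brackets appearing in the original; since BiHom skew-symmetry has the twisted form $[\beta(u),\alpha(v)] = -[\beta(v),\alpha(u)]$, this is where surjectivity (hence, in finite dimension, bijectivity) of $\alpha$ and $\beta$ enters, together with the fact that $d$ commutes with $\alpha$ and $\beta$. Writing $[y,x] = -[\beta\alpha^{-1}(x),\alpha\beta^{-1}(y)]$ and applying the defining identity to the twisted pair $(\beta\alpha^{-1}(x),\alpha\beta^{-1}(y))$, together with skew-symmetry applied to each bracket on the right of the swapped relation, yields two expressions for $\lambda\,d([y,x])$. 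Equating them and using bijectivity to substitute $x\mapsto\alpha\beta^{-1}(x)$, $y\mapsto\beta\alpha^{-1}(y)$ produces the identity
\begin{equation*}
(\mu-\gamma)\bigl([d(x),\alpha^k\beta^l(y)] - [\alpha^k\beta^l(x),d(y)]\bigr) = 0.
\end{equation*}
Since $\mu\ne\gamma$ in all three substantive cases, $d$ is therefore a $(0,1,-1)$-$\alpha^k\beta^l$-derivation, and the original identity collapses to $\lambda\,d([x,y]) = (\mu+\gamma)[d(x),\alpha^k\beta^l(y)]$.

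From this reduced identity each conclusion is immediate. Case (i) has $\mu+\gamma\ne 0$, so dividing gives $d\in Der_{\alpha^k\beta^l}^{(\lambda/(\mu+\gamma),1,0)}(G)$. Case (ii) has $\mu+\gamma=0$ and $\lambda\ne 0$, so $d([x,y])=0$; combined with the $(0,1,-1)$-condition this yields $d\in Der_{\alpha^k\beta^l}^{(1,0,0)}(G)\cap Der_{\alpha^k\beta^l}^{(0,1,-1)}(G)$, and a short direct verification shows that this intersection equals $Der_{\alpha^k\beta^l}^{(1,1,-1)}(G)$. Case (v) has $\lambda=0$ and $\mu+\gamma\ne 0$, so $[d(x),\alpha^k\beta^l(y)]=0$ and $d\in Der_{\alpha^k\beta^l}^{(0,1,0)}(G)$. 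Each converse is obtained by running the same swap-and-skew argument on the simpler identity on the right to re-derive the $(0,1,-1)$-condition and then plugging back in.

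The main obstacle is the first step of the substantive cases. Because BiHom skew-symmetry inserts $\alpha$ and $\beta$ into the arguments, swapping $x$ and $y$ does not simply contribute a sign but also twists each bracket; the careful bookkeeping of these twists, followed by the re-substitution $x\mapsto\alpha\beta^{-1}(x)$, $y\mapsto\beta\alpha^{-1}(y)$ to return to untwisted arguments, is the only technically delicate point. Everything else is straightforward rescaling.
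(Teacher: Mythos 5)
Your proposal is correct and follows essentially the same route as the paper: interchanging the arguments, invoking surjectivity to write $x=\beta(a)$, $y=\alpha(b)$, and applying BiHom skew-symmetry to derive $(\mu-\gamma)\bigl([d(x),\alpha^{k}\beta^{l}(y)]-[\alpha^{k}\beta^{l}(x),d(y)]\bigr)=0$, after which the identity collapses to $\lambda\, d([x,y])=(\mu+\gamma)[d(x),\alpha^{k}\beta^{l}(y)]$ and the seven cases follow by rescaling. Your write-up is in fact slightly more complete than the paper's, since you spell out the case analysis and the converse directions that the paper dismisses as "easily deduced."
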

\begin{proof}
	Let $x,\,y\in G.$	Since $\alpha$ and $ \beta$ are surjective, there exists $a,\,b\in G $ such that $x=\beta(a) $, $y=\alpha(b)$.
 Suppose any $\lambda,\mu,\gamma\in\C  $ are given. Then for
 $ d\in Der_{\alpha^k\beta^l}^{(\lambda,\mu,\gamma)}(G) $
 and arbitrary $ a,b\in G $ we have	
 \begin{align*}
   \lambda\,	d([\beta(a),\alpha(b)])=\mu\, [d(\beta(a)),\alpha^{k}\beta^{l}(\alpha(b))]+ \gamma\,[\alpha^{k}\beta^{l}(\beta(a)),d(\alpha(b))] \\
   \lambda\,	d([\beta(b),\alpha(a)])=\mu\, [d(\beta(b)),\alpha^{k}\beta^{l}(\alpha(a))]+ \gamma\,[\alpha^{k}\beta^{l}(\beta(b)),d(\alpha(a))]
 \end{align*}
Thus, using $ d\circ \alpha=\alpha \circ d$, $ d\circ \beta=\beta \circ d$,  $ \alpha\circ\beta=\beta\circ\alpha $  and  \eqref{skew}, we have	
\begin{align*}
\lambda\,	d([\beta(a),\alpha(b)])=\mu\, [\beta(d(a)),\alpha^{k+1}\beta^{l}(b))]+ \gamma\,
[\alpha^{k}\beta^{l+1}(a)
,   \alpha(d(b)) ] \\
\lambda\,	d([\beta(b),\alpha(a)])=-\mu\, [\alpha^{k}\beta^{l+1}(a),\alpha(d(b))]
- \gamma\,[\beta(d(a))       ,\alpha^{k+1}\beta^{l}(b)]
\end{align*}
By summing the two previous equalities we obtain
\begin{equation*}
0=(\mu-\gamma)\left([\beta(d(a)),\alpha^{k+1}\beta^{l}(b)]-[\alpha^{k}\beta^{l+1}(a),\alpha(d(b))] \right).
\end{equation*}	
So, $(\mu-\gamma)\left([d(x),\alpha^{k}\beta^{l}(y)]-[\alpha^{k}\beta^{l}(x),d(y)] \right) =0  $.
Therefore, for $\mu\neq \gamma  $, $$ [d(x),\alpha^{k}\beta^{l}(y)]=[\alpha^{k}\beta^{l}(x),d(y)]. $$
Hence, applying $ d\in  Der_{\alpha^k\beta^l}^{(\lambda,\mu,\gamma)}(G)$ yields
$$\lambda d([x,y])= (\mu+\gamma) [d(x),\alpha^{k}\beta^{l}(y)] .$$
 The rest of the proof is easily deduced. %\qed
\end{proof}
%%%%%%%%%%%%%%%%%%%%%%
\begin{thm}\label{classGder}
	Let $ (G,[\cdot,\cdot],\alpha,\beta) $ be a  BiHom–Lie algebra
	such that the maps $\alpha$ and $\beta$ are surjective.
	%Assume that $[\beta(x),\alpha(y)] =0  $ for all $x,y\in L $ (i.e   $L$ is a BiHom-Lie algebra ).
	For any $\lambda,\mu,\gamma\in \C$
	there exists $\delta\in \C$ such that
	the subspace $ Der_{\alpha^k\beta^l}^{(\delta,\mu,\gamma)}(G)$
	is equal to one
	of the four following subspaces: 	
	\begin{multicols}{2}
		\begin{enumerate}[label=\upshape{\arabic*)},left=5pt]
			\item $\displaystyle Der_{\alpha^k\beta^l}^{(0,0,0)}(G)$;
			\item $\displaystyle Der_{\alpha^k\beta^l}^{(1,0,0)}(G)$;				
			\item $\displaystyle Der_{\alpha^k\beta^l}^{(\delta,1,0)}(G)$;
			\item $\displaystyle Der_{\alpha^k\beta^l}^{(\delta,1,1)}(G)$;
			\item $\displaystyle Der_{\alpha^k\beta^l}^{(1,1,-1)}(G)$;
			\item $\displaystyle Der_{\alpha^k\beta^l}^{(0,1,-1)}(G)$.			
		\end{enumerate}
	\end{multicols}
\end{thm}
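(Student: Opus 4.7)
The plan is to derive the theorem as a direct consequence of the preceding lemma by exhaustively enumerating the possible positions of $(\lambda,\mu,\gamma)\in\C^3$ and in each case exhibiting the scalar $\delta$ that places $Der_{\alpha^k\beta^l}^{(\delta,\mu,\gamma)}(G)$ into one of the six listed shapes. Since the previous lemma already rewrote each of the seven nontrivial regimes in a canonical form, the bulk of the work is organizational rather than computational.

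First I would split on whether $\lambda=0$ or $\lambda\neq 0$, and within each branch on the comparison between $\mu^2$ and $\gamma^2$. In the regime $\lambda\neq 0$ with $\mu^2\neq\gamma^2$, part~(1) of the lemma gives
\[
 Der_{\alpha^k\beta^l}^{(\lambda,\mu,\gamma)}(G)=Der_{\alpha^k\beta^l}^{(\lambda/(\mu+\gamma),1,0)}(G),
\]
which is of shape~(iii) with $\delta=\lambda/(\mu+\gamma)$. When $\lambda\neq 0$ and $\mu=\gamma\neq 0$, part~(3) yields shape~(iv) with $\delta=\lambda/\mu$. When $\lambda\neq 0$ and $\gamma=-\mu\neq 0$, part~(2) gives shape~(v) $Der_{\alpha^k\beta^l}^{(1,1,-1)}(G)$. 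Finally, when $\lambda\neq 0$ and $\mu=\gamma=0$, part~(4) gives shape~(ii) $Der_{\alpha^k\beta^l}^{(1,0,0)}(G)$. This exhausts the cases with $\lambda\neq 0$, because $\mu^2=\gamma^2$ forces either $\mu=\gamma$ or $\mu=-\gamma$, and these two subcases are further partitioned by whether $\mu=0$.

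For $\lambda=0$ I would run the same dichotomy. If $\mu^2\neq\gamma^2$, part~(5) gives shape~(iii) with $\delta=0$. If $\mu=\gamma\neq 0$, part~(6) gives shape~(iv) with $\delta=0$. If $\mu=-\gamma$ (allowing $\mu=0$ in the trivial direction), part~(7) gives shape~(vi). The only remaining possibility is $\lambda=\mu=\gamma=0$, in which the defining identity becomes vacuous and the space reduces to $\Omega$, which is precisely shape~(i) $Der_{\alpha^k\beta^l}^{(0,0,0)}(G)$.

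The one subtlety that deserves care, and probably the main obstacle, is the overlap between the cases in the lemma: when $\lambda\neq 0$ and $\mu=\gamma=0$, the hypothesis $\mu^2\neq\gamma^2$ of part~(1) fails, so one must not apply the formula $\delta=\lambda/(\mu+\gamma)$. I would emphasize that the partition
\[
\C^3=\{\lambda\neq 0\}\sqcup\{\lambda=0\}
\]
is further refined by whether $\mu=\gamma$, $\mu=-\gamma$, or $\mu^2\neq\gamma^2$, and (for the first two) whether $\mu=0$; a short tabulation verifies that every $(\lambda,\mu,\gamma)$ lies in exactly one of the seven regimes of the lemma or in the trivial null regime, and that each regime maps into one of the six shapes (i)--(vi) by the assignments above. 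This completes the proof.
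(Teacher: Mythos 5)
Your proposal is correct and matches the paper's (implicit) argument: the paper states Theorem \ref{classGder} without a separate proof, treating it as an immediate corollary of the preceding seven-case lemma, and your write-up simply makes the case enumeration and the choice of $\delta$ in each regime explicit, including the correct identification of the residual case $\lambda=\mu=\gamma=0$ with the space $\Omega=Der_{\alpha^k\beta^l}^{(0,0,0)}(G)$. No gap.
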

%\begin{remq}
%	If $ \alpha $ or $ \beta $ is not surjective
% and $ I=\ker \alpha +\ker \beta $, $ L=I\oplus G $. Let $ D\in Der_{\alpha^k\beta^l}^{(\lambda,\mu,\gamma)}(L) $ Then, $D=D_{1}+D_{2} $  where $ D_{1} $ is a  $ (\lambda,0,0) $-$ \alpha^k\beta^l $-derivation of $ I $ satisfies $\lambda D_{1}([x,y])=\mu [D_{1}(x),\alpha^{k}\beta^{l}(y)] $ for all $ x\in I,\, y\in G $ and $D_{2}  $  is   $ (\lambda,\mu,\gamma) $-$ \alpha^k\beta^l $-derivation of the BiHom-Lie algebra $ G $.
%\end{remq}
\begin{exmpl}
Let $ H $ be a $3$-dimensional Heisenberg BiHom-Lie algebra (see Ex. \ref{heisen}).
\begin{align*}
Der_{\alpha^k\beta^l}^{(1,0,0)}(H)&= \left\lbrace \begin{pmatrix}
d_1&0&0\\
0&d_2&0\\
0&0&0
\end{pmatrix} \mid d_1,\, d_2\in \C      \right\rbrace; \\	
	%%%%%%%%%%%%%%%%%%%%%%%%%%%%%%%%%
Der_{\alpha^k\beta^l}^{(\delta,1,0)}(H)&= \left\lbrace \begin{pmatrix}
d_{1}&0&0\\
0&d_{1}\frac{a^{r}x^{l}}{ b^{2r}y^{2l}} &0\\
0&0&d_{1}\frac{ a^{r}x^{l}}{\delta b^{r}y^{l} }
\end{pmatrix} \mid  d_1\in \C      \right\rbrace ;\\
%%%%%%%%%%%%%%%%%%%%%%%%%%%%%%%%%%%%%%%%
%%%%%%%%%%%%%%%%%%%%%%%%%%%	
Der_{\alpha^k\beta^l}^{(\delta,1,1)}(H)&= \left\lbrace \begin{pmatrix}
d_{1}&0&0\\
0&d_2&0\\
0&0&\frac{d_2 b^{2r}y^{2l}+d_{1}a^{r}x^{l}}{\delta b^{r}y^{l}}
\end{pmatrix} \mid d_{1}, d_2\in \C      \right\rbrace; \\
%%%%%%%%%%%%%%%%%%%%%%%%%%%%%%%%%%
	%%%%%%%%%%%%%%%%%%%%%%%%%%%%%
Der_{\alpha^k\beta^l}^{(0,1,1)}(H)&= \left\lbrace \begin{pmatrix}
d_{1}&0&0\\
0&-d_1\frac{ a^{r}x^{l}}{ b^{2r}y^{2l}} &0\\
0&0&d_{3}
\end{pmatrix} \mid d_{1}, d_3\in \C      \right\rbrace; \\	
%%%%%%%%%%%%%%%%%%%%%%%%%%%%%
Der_{\alpha^k\beta^l}^{(1,1,-1)}(H)&= \left\lbrace \begin{pmatrix}
d_{1}&0&0\\
0&d_{1}\frac{a^{r}x^{l}}{ b^{2r}y^{2l}} &0 \\
0&0&0
\end{pmatrix} \mid  d_1\in \C      \right\rbrace;\\
	%%%%%%%%%%%%%%%%%%%%%%%%%%%%%
Der_{\alpha^k\beta^l}^{(0,1,-1)}(H)&= \left\lbrace \begin{pmatrix}
d_{1}&0&0\\
0&d_1\frac{ a^{r}x^{l}}{ b^{2r}y^{2l}} &0\\
0&0&d_{3}
\end{pmatrix} \mid d_{1}, d_3\in \C      \right\rbrace.
%%%%%%%%%%%%%%%%%%%%%%%%%%%	
	\end{align*}	
\end{exmpl}
Next proposition allows to extend some results from
 \cite{DoradodoGD} to BiHom-Lie case.
\begin{prop}\label{LieBi}
Any  $ (\lambda,\mu,\gamma)$-$\alpha^0\beta^0$-derivation  of regular  multiplicative
BiHom-Lie algebra $\left(G,[\cdot,\cdot],\alpha,\beta \right)$ is a $(\lambda,\mu,\gamma)$-derivation  of induced Lie algebra
$ \left(G,[\cdot,\cdot]' \right)  $.
\end{prop}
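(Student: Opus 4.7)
The plan is to apply the defining identity of a $(\lambda,\mu,\gamma)$-$\alpha^0\beta^0$-derivation directly to an element of the form $[\alpha^{-1}(x),\beta^{-1}(y)]$, which is precisely the bracket $[x,y]'$ in the induced Lie algebra, and then use commutation of $d$ with $\alpha$ and $\beta$ (and hence with $\alpha^{-1}$ and $\beta^{-1}$, since we are in the regular case) to rewrite both sides in terms of the primed bracket.

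Concretely, let $d\in Der^{(\lambda,\mu,\gamma)}_{\alpha^0\beta^0}(G)$ and fix $x,y\in G$. First I would write
\[
\lambda\, d([x,y]') \;=\; \lambda\, d\bigl([\alpha^{-1}(x),\beta^{-1}(y)]\bigr)
\]
by Proposition \ref{induced}. Then I apply the defining identity of $d$ (with $k=l=0$) to the pair $(\alpha^{-1}(x),\beta^{-1}(y))$ to get
\[
\lambda\, d\bigl([\alpha^{-1}(x),\beta^{-1}(y)]\bigr)
= \mu\,[d(\alpha^{-1}(x)),\beta^{-1}(y)] + \gamma\,[\alpha^{-1}(x),d(\beta^{-1}(y))].
\]

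The next step exploits regularity: since $\alpha$ and $\beta$ are bijective and $d\in\Omega$, we have $d\alpha^{-1}=\alpha^{-1}d$ and $d\beta^{-1}=\beta^{-1}d$. Substituting yields
\[
\lambda\, d([x,y]') = \mu\,[\alpha^{-1}(d(x)),\beta^{-1}(y)] + \gamma\,[\alpha^{-1}(x),\beta^{-1}(d(y))],
\]
and by the very definition of $[\cdot,\cdot]'$ the right-hand side equals $\mu\,[d(x),y]' + \gamma\,[x,d(y)]'$, which is exactly the $(\lambda,\mu,\gamma)$-derivation identity in the induced Lie algebra.

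There is no real obstacle here: the argument is essentially bookkeeping once one notices that the identity $d\alpha^{-1}=\alpha^{-1}d$ (and the analogue for $\beta$) follows automatically from $d\in\Omega$ together with bijectivity of $\alpha,\beta$. The only point worth flagging is that the statement uses $k=l=0$, which is what makes the twists $\alpha^k\beta^l$ disappear from the identity and permits the clean substitution above; for general $k,l$ extra twist factors would obstruct the identification with the plain Lie bracket.
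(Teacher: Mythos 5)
Your argument is correct: the paper itself states Proposition~\ref{LieBi} without proof, and your computation is exactly the natural (and surely intended) one — unwind $[x,y]'=[\alpha^{-1}(x),\beta^{-1}(y)]$, apply the defining identity with $k=l=0$, and use that $d\in\Omega$ together with bijectivity of $\alpha,\beta$ gives $d\alpha^{-1}=\alpha^{-1}d$ and $d\beta^{-1}=\beta^{-1}d$, so the inverses reassemble into the primed bracket. Your closing remark correctly identifies why the statement is restricted to $k=l=0$.
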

Now we will discuss in detail the possible Theorem \ref{classGder} for a finite-dimensional BiHom-Lie algebra $ L $, and we give the connection between the generalized derivation of the type studied in \cite{belhsine} and the generalized derivations of the type studied in this work.
\begin{enumerate}[label=\upshape{\arabic*)},left=5pt]
\item  \underline{$\displaystyle Der_{\alpha^k\beta^l}^{(0,0,0)}(L)=\Omega$}.
We have $ \Omega $  is a Lie algebra, where Lie bracket is given by \eqref{lieBracket}.
\item \underline{ $\displaystyle Der_{\alpha^k\beta^l}^{(1,0,0)}(L)=\left\lbrace d\in \Omega\mid d(L^{2})=0 \right\rbrace $}
and therefore its dimension is $$ dim Der_{\alpha^k\beta^l}^{(1,0,0)}(L) = codim L^{2}\, dim L. $$
If the BiHom-Lie algebra $ L $ is simple, then $ Der_{\alpha^0\beta^0}^{(1,0,0)}(L) =\{0\}  $.
\item  \underline{$\displaystyle Der_{\alpha^k\beta^l}^{(\delta,1,0)}(L)$:}
\begin{enumerate}[label=\upshape{(\alph*)},left=0pt]
\item  If $ \delta=0,$ then $\displaystyle Der_{\alpha^k\beta^l}^{(0,1,0)}(L)=\left\lbrace d\in \Omega\mid d(L)\subset C(\alpha^{k}\beta^{l}(L))\right\rbrace $, where
    $$ C(\alpha^{k}\beta^{l}(L))=\left\lbrace x\in L\mid [x,y]=0; \forall y\in  \alpha^{k}\beta^{l}(L)\right\rbrace ,$$
    is the centralizer of $\alpha^{k}\beta^{l}(L)$.
    Therefore, $$ \dim Der_{\alpha^k\beta^l}^{(0,1,0)}(L)=\dim L\, \dim C(\alpha^{k}\beta^{l}(L)).$$

If the BiHom-Lie algebra $ L $ is simple, then $\displaystyle Der_{\alpha^0\beta^0}^{(0,1,0)}(L)=\{0\}$.
\item  If $ \delta=1$, then $\displaystyle Der_{\alpha^k\beta^l}^{(1,1,0)}(L)$ is the  $\alpha^k\beta^l$-centroid of $L$  denoted $\varGamma_{\alpha^k\beta^l}(L)$.  We denote by  $\displaystyle \varGamma(L) $ the vector space spanned by the set $ \{d\in \varGamma_{\alpha^k\beta^l}(L)\mid k,l\in \N\} $.
\begin{enumerate}[label=\upshape{(\roman*)},left=0pt]
	\item  If $ \phi\in   \varGamma_{\alpha^k\beta^l}(G)$  and $ d\in Der_{\alpha^s\beta^r}(G)  $, then $ \phi\circ d $ is a $ \alpha^{k+s}\beta^{r+l} $-derivation of $ G $.
\item  $\varGamma_{\alpha^k\beta^l}(L) \cap Der_{\alpha^k\beta^l}(L)= CDer_{\alpha^k\beta^l}(G)$, where $ CDer_{\alpha^k\beta^l}(G) $ is the set of $ \alpha^k\beta^l $-central derivations defined by \[CDer_{\alpha^k\beta^l}(G)= Der_{\alpha^k\beta^l}^{(1,0,0)}(G)\bigcap  Der_{\alpha^k\beta^l}^{(0,1,0)}(G).\]
\item  For any $d\in Der_{\alpha^k\beta^l}(G)  $ and $ \phi\in   \varGamma_{\alpha^t\beta^s}(G) $ one has	
\begin{list}{$\bullet$}{}
  \item The composition $d\circ \phi$ is in $\varGamma_{\alpha^{k+t}\beta^{l+s}}(G)  $ if and only if $ \phi\circ d $ is a central
  derivation of $ L $;	
    \item  The composition $d\circ \phi$ is a $\alpha^{k+t}\beta^{l+s}  $-derivation of $ G $ if and only if $ [d, \phi ]'$ is a
    $\alpha^{k+t}\beta^{l+s}  $-central derivation of $ G $. (see \cite{AlmutariAhmad:CentrquasicentrfdimLeibnizalg} for  the Leibniz case and \cite{AbdulkarimAssociative}  for the  associative algebras case).
\end{list}
\end{enumerate}
%\end{prop}
%Let $\left(G,[\cdot,\cdot],\alpha,\beta \right)   $  be a regular multiplicative BiHom-Lie algebra.	
Suppose that $ L $  admits a generalized derivation $D\in  Der_{\alpha^0\beta^0}^{(1,1,0)}(L) $. If $ \lambda\in \sigma(D) $ is an eigenvalue of $ D $, then the corresponding generalized eigenspace $ L_{\lambda} $ is an ideal of $ L $.
Moreover, the generalized eigenspace decomposition $ L=\oplus_{\lambda\in\sigma(D)} L_{\lambda}$ is given in terms of ideals of $ L $.
Suppose that the BiHom-Lie algebra $ L $ is simple, then $Der_{\alpha^0\beta^0}^{(1,1,0)}(L)$ is the one–dimensional BiHom-Lie algebra containing multiples of the identity operator.		
\item For $ \delta\notin \{0,1\} $.
%\begin{prop}
Suppose that $ G $ is non-abelian. Then, by Proposition \ref{nilpotent}, Proposition \ref{LieBi} and \cite[Proposition 2.19]{DoradodoGD}, the following statements are equivalent:		
	\begin{enumerate}[label=\upshape{(\roman*)},left=-7pt]
		\item $ G $ admits an invertible generalized derivation $ D\in Der_{\alpha^0\beta^0}^{(\delta,1,0)}(G) $.
		\item $ G $ is at most a $ 2 $-step nilpotent BiHom-Lie algebra.
		\item $ G $ admits an invertible semisimple
		generalized derivation \\ $ D\in Der_{\alpha^0\beta^0}^{(\delta,1,0)}(G) $ with minimal polynomial  $ q(x)=(x-\delta^{-1})(x-1).$
	\end{enumerate}
%\end{prop}
%%%%%%%%%%%%%%%%%%%%%%%%%
%%%%%%%%%%%%%%%%%
%By Proposition \ref{semisimple}, Proposition\ref{LieBi} and
%\cite[Example $ 1 $. ]{HrivnakGD}, we have
%\begin{prop}
%	If the  regular  BiHom-Lie algebra $\left(L,[\cdot,\cdot],\alpha,\beta \right)   $
%	is simple then  $Der_{\alpha^0\beta^0}(L)\cong L$,
%	\[ Der_{\alpha^0\beta^0}^{(1,0,0)}(L)=Der_{\alpha^0\beta^0}^{(0,1,0)}(L)=\{0\}   \]
%	and 	
%	$Der_{\alpha^0\beta^0}^{(1,1,0)}(L)$      is the one–dimensional Lie algebra containing multiples of the identity
%	operator.
%\end{prop}

\end{enumerate}
%%%%%%%%%%%%%%%%%%%%%%%%	
\item\underline{  $\displaystyle Der_{\alpha^k\beta^l}^{(\delta,1,1)}(L)$:}	
\begin{enumerate}[label=\upshape{(\alph*)},left=0pt]
\item For $ \delta=0 $ 	we have a Lie algebra
\begin{multline*}
Der^{(0,1,1)}(L) =
\{d\in \Omega \mid \exists k,l\in \N: \\
\quad [d(x),\alpha^{k}\beta^{l}(y)]=-[\alpha^{k}\beta^{l}(x),d(y)],\forall x,y\in L\}.
\end{multline*}
If the BiHom-Lie algebra $ L $ is simple, then
by Proposition \ref{semisimple}, Proposition \ref{LieBi} and
\cite[Corollary 2.10, Corollary 2.11]{DoradodoGD}, we have
 $Der_{\alpha^0\beta^0}^{(0,1,1)}(L)=\{0\}$.
If the simple BiHom-Lie algebra $ L $  admits an invertible generalized derivation $D\in Der_{\alpha^0\beta^0}^{(0,1,1)}(L)$, then $L$ is solvable.
%%%%%%%%%%%
\item For $ \delta=1 $.  We get the Lie algebra of derivations of L: \\ $Der_{\alpha^k\beta^k}^{(1,1,1)}(L)=Der_{\alpha^k\beta^k}(L)  $ and $ \left( Der(L),[\cdot,\cdot]' \right) $ is a Lie algebra ($\displaystyle Der(L) $ the vector space spanned by  $ \{d\in Der_{\alpha^k\beta^l}(L)\mid k,l\in \N\} $).
%%%%%%%%%%%%%%%%%%%%%%%%%%%%%%%%%%%%%%%%%%%%%
\item For $ \delta \notin\{-1,0,1,2\} $.  When $ G $ admits an invertible semisimple generalized $ D\in Der_{\alpha^0\beta^0}^{(\delta,1,1)}(G) $
by Proposition \ref{solvable}, Proposition \ref{LieBi} and
\cite[Proposition 2.8]{DoradodoGD},  $ G $ is at most a $ 3 $-step solvable BiHom-Lie algebra. When the  invertible semisimple generalized $ D $ has only two different eigenvalues,
by Proposition \ref{nilpotent}, Proposition \ref{LieBi} and
\cite[Lemma 2.2]{DoradodoGD},
 $ G $ is at most a $ 2 $-step nilpotent BiHom-Lie algebra.
\end{enumerate}
%%%%%%%%%%%%%%%%%%%%%%%%	
\item\underline{$\displaystyle Der_{\alpha^k\beta^l}^{(1,1,-1)}(G)$:} We have 	
\begin{multline*}
Der_{\alpha^k\beta^l}^{(1,1,-1)}(G)=Der_{\alpha^k\beta^l}^{(0,1,-1)}(G)\cap Der_{\alpha^k\beta^l}^{(1,0,0)}(G)\\
=\left\lbrace d \in \Omega\mid d([x,y])= 0= [d(x),\alpha^{k}\beta^{l}(y)]=[\alpha^{k}\beta^{l}(x),d(y)] \right\rbrace.
\end{multline*}
Then $\displaystyle Der_{\alpha^k\beta^l}^{(1,1,-1)}(G)$ is the set of $\alpha^k\beta^l$-central derivations of $ G $. Define the bilinear map
\begin{equation}\label{Qcent}
\mu\colon \Omega\times \Omega\to \Omega,\, \mu(f,g)=\frac{1}{2}\left(f\circ g+g \circ f \right).
\end{equation}
Then $\left( CDer(G),\mu\right)   $ is a Jordan algebra.
%%%%%%%%%%%%%%%%%%%%%%%%	
\item\underline{$\displaystyle Der_{\alpha^k\beta^l}^{(0,1,-1)}(L)$:} We have
\[   \displaystyle Der_{\alpha^k\beta^l}^{(0,1,-1)}(G)    = \left\lbrace  d\in   \Omega\mid        [d(x),\alpha^{k}\beta^{l}(y)]=[\alpha^{k}\beta^{l}(x),d(y)]          \right\rbrace             .\]
Then    $\displaystyle Der_{\alpha^k\beta^l}^{(0,1,-1)}(L)$ is called $\alpha^k\beta^l$-quasi-centroid of $L$ and denoted $QC_{\alpha^k\beta^l}(L)$. With the bilinear map $ \mu $ defined in \eqref{Qcent}, we have that $ \left(QC_{\alpha^k\beta^l}(G), \mu \right)  $ is a Jordan algebra.
\end{enumerate}

We end this section with a construction of a BiHom-Lie algebra from an extension of a Lie algebra $ L $ by a $ (b,a,a) $-derivation of $ L  $.
\begin{prop}
 Let $ (L,[\cdot,\cdot]') $ be a Lie algebra and
 $ D\in End(L) $  be a non-zero $ (b, a, a) $-derivation and let $ \alpha\colon L\oplus\C D\to  L\oplus\C D$ and
 $ \beta\colon L\oplus\C D\to  L\oplus\C D$ defined respectivly by $ \alpha(x+\lambda D)=x+\lambda a D $ and $ \beta(x+\lambda D)=x+\lambda b  $; $ x\in L,\, \lambda\in \C $. Let
 Define the bilinear map $ [\cdot,\cdot]\colon L\oplus\C D\times L\oplus\C D\to L\oplus\C D $, $ [x+\lambda d,y+\mu d]=[x,y]'-\mu b d(x)+\lambda a d(y)  $. Then
 $ \left(L\oplus\C D,[\cdot,\cdot],\alpha,\beta \right)  $ is a  BiHom-Lie algebra.
\end{prop}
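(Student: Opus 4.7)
\smallskip

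\noindent\textbf{Proof proposal.} My plan is to check the three defining axioms of a BiHom-Lie algebra directly: commutativity $\alpha\beta=\beta\alpha$, BiHom-skew-symmetry, and the BiHom-Jacobi identity. Denote a generic element of $L\oplus \C D$ by $u=x+\lambda D$ with $x\in L$, $\lambda\in\C$.

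The identity $\alpha\beta=\beta\alpha$ is immediate: both maps act as the identity on the $L$-component and as multiplication by the commuting scalars $a$ and $b$ on the $D$-component. For the BiHom-skew-symmetry I would take $u_1=x+\lambda D$, $u_2=y+\mu D$ and expand
\begin{align*}
[\beta(u_1),\alpha(u_2)] &= [x,y]' + ab\bigl(\lambda D(y)-\mu D(x)\bigr), \\
[\beta(u_2),\alpha(u_1)] &= [y,x]' + ab\bigl(\mu D(x)-\lambda D(y)\bigr),
\end{align*}
which are negatives of each other by the skew-symmetry of $[\cdot,\cdot]'$.

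The substantive step is the BiHom-Jacobi identity. The strategy is to take $u_i=x_i+\lambda_i D$ ($i=1,2,3$), compute the three triple brackets $[\beta^2(u_i),[\beta(u_{i+1}),\alpha(u_{i+2})]]$ cyclically, and sort the resulting terms into four classes: (a) nested Lie brackets $[x_i,[x_j,x_k]']'$; (b) cross terms of the form $[x_i,D(x_j)]'$ carrying coefficient $ab\lambda_k$; (c) outer-$D$ terms $\lambda_i ab^2 D([x_j,x_k]')$ coming from the action of $\beta^2(u_i)$ on an $L$-valued argument; and (d) double-$D$ terms $\lambda_i\lambda_j a^2b^3 D^2(x_k)$. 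Class (a) vanishes by the Jacobi identity of $(L,[\cdot,\cdot]')$, and class (d) vanishes because each $D^2(x_k)$ appears with opposite signs from two different cyclic permutations. The classes (b) and (d) [sic: (c)] must cancel against each other, and this is precisely where the $(b,a,a)$-derivation hypothesis enters: from $bD([x,y]')=a[D(x),y]'+a[x,D(y)]'$ one rewrites each pair of cross terms of the form $\lambda_k ab\bigl([x_i,D(x_j)]'-[x_j,D(x_i)]'\bigr)$ as a multiple of $D([x_i,x_j]')$, matching the coefficient in class (c) up to sign.

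The main obstacle is the bookkeeping in class (c) versus class (b): one has to keep careful track of the two scalars $a$ and $b$ introduced by $\alpha$, $\beta$ together with the factor $b/a$ produced by the derivation identity, and verify that the cyclic sum over the three indices cancels term by term; the $D^2$ coefficients must also be inspected to confirm their antisymmetric cancellation despite the presence of $a^2b^3$ factors. Once these cancellations are observed, the three axioms are established and $(L\oplus \C D,[\cdot,\cdot],\alpha,\beta)$ is a BiHom-Lie algebra.
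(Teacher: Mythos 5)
Your framework is the right one and most of it checks out: $\alpha\beta=\beta\alpha$ and the BiHom-skew-symmetry computations are correct, class (a) dies by the Jacobi identity of $(L,[\cdot,\cdot]')$, and class (d) does cancel antisymmetrically (the common factor $a^2b^3$ is harmless). The gap is precisely the step you flag as ``bookkeeping'' and then assert rather than perform: classes (b) and (c) do \emph{not} cancel. Track the coefficient of $\lambda_1$ in the cyclic sum. Class (b) contributes $ab\bigl([x_3,D(x_2)]'-[x_2,D(x_3)]'\bigr)=-ab\bigl([D(x_2),x_3]'+[x_2,D(x_3)]'\bigr)$, which the $(b,a,a)$-derivation identity converts to $-ab\cdot\tfrac{b}{a}\,D([x_2,x_3]')=-b^2D([x_2,x_3]')$, while class (c) contributes $+ab^2D([x_2,x_3]')$; the sum is $(a-1)b^2D([x_2,x_3]')$, not zero. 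Equivalently, specializing the BiHom-Jacobi identity to the triple $(x,y,D)$ with $x,y\in L$ gives
\begin{equation*}
\bigl[\beta^2(x),[\beta(y),\alpha(D)]\bigr]+\bigl[\beta^2(y),[\beta(D),\alpha(x)]\bigr]+\bigl[\beta^2(D),[\beta(x),\alpha(y)]\bigr]=(a-1)\,b^2\,D\bigl([x,y]'\bigr),
\end{equation*}
which is nonzero already for the two-dimensional Lie algebra $[e_1,e_2]'=e_1$ with $D=\mathrm{diag}(1,2)$, $a=2$, $b=6$ (these data do satisfy $bD([x,y]')=a[D(x),y]'+a[x,D(y)]'$, and the left-hand side evaluates to $36e_1$).

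So the obstruction $(a-1)b^2D([x,y]')$ is real: with $\alpha$, $\beta$ and the bracket read in the only sensible way (interpreting the statement's $\beta(x+\lambda D)=x+\lambda b$ as $x+\lambda bD$), the BiHom-Jacobi identity holds only if $a=1$, or $b=0$, or $D$ annihilates all brackets. To salvage the statement one must either add such a hypothesis or renormalize the twisting maps --- for instance, one checks that $\alpha(x+\lambda D)=x+\lambda D$ and $\beta(x+\lambda D)=x+\lambda\tfrac{b}{a}D$ make both the skew-symmetry constraint and the (b)/(c) cancellation come out right. As written, your proof cannot be completed, because the asserted matching of coefficients ``up to sign'' between the cross terms and the outer-$D$ terms is false by a factor of $a$.
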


%%%%%%%%%%%%%%%%%%%%%%%%%%%%%

%%%%%%%%%%%%%%%%%%%%%%%%%%%%%
%By  \textbf{Proposition}\ref{LieBi} and
%\cite[\textbf{Proposition 3.2} ]{DoradodoGD}, we have
%\begin{prop}
%Let $ L $ be a non-abelian regular multiplicative BiHom-Lie algebra. If $ L $ admits a periodic generalized derivation
%$D\in Der_{\alpha^0\beta^0}^{(-1,1,1)}(L)   $
%of order $ m $, then $  m $ is a multiple of three.	
%\end{prop}
%%%%%%%%%%%%%%%%%%%%%%%%%%%%%
%By  \textbf{Proposition}\ref{LieBi} and
%\cite[\textbf{Theorem 3.5} ]{DoradodoGD}, we have
%\begin{prop}
%	Let $ L $ be a  regular multiplicative BiHom-Lie algebra.
%Then the following statements are equivalent:
%\begin{enumerate}[label=\upshape{(\arabic*)},left=0pt]
%	\item $ L $ admits an invertible periodic generalized derivation
%	$D\in Der_{\alpha^0\beta^0}^{(-1,1,1)}(L)   $.
%	\item $ L $ is  anti-triangularly graded.	
%\end{enumerate}	
%\end{prop}
%%%%%%%%%%%%%%%%%%%%%%%%%%%%%%%%%%%%%%

\section{ Classification of multiplicative $ 2 $-dimensional \\ BiHom-Lie algebras}
\label{sec:classmultiplic2dimBiHomLiealg}
In this section, we aim to classify $ 2 $-dimensional non-trivial
BiHom-Lie algebras.
 An $ n $-dimensional multiplicative  BiHom-Lie algebra is identified to its structure constants with respect to a fixed basis. It turns out that the
axioms of  multiplicative  BiHom-Lie algebra structure translate to a system of polynomial equations that define
the algebraic variety of $ n $-dimensional  multiplicative BiHom-Lie algebra which is embedded into $ \K^{n^{3}+2n^{2}} $.
The classification requires to solve this algebraic system. The calculations are handled using
a computer algebra system. For $ n=2 $, we include in the following an outline of the computation.
\begin{enumerate}[label=\upshape{\arabic*.},left=0pt]
\item \label{solut1} Solving \eqref{commute}, we obtain the following solutions:	
\begin{enumerate}[label=\upshape{\ref{solut1}\arabic*.},left=0pt]
	\item \label{enumitem1:solsalphbet} \qquad $ \alpha=\begin{pmatrix}
	a&0\\
	0&b
	\end{pmatrix},
\qquad \beta=\begin{pmatrix}
	x&0\\
	0&y
	\end{pmatrix} $;
	\item \label{enumitem2:solsalphbet} \qquad $ \alpha=\begin{pmatrix}
	a&0\\
	0&a
	\end{pmatrix},
\qquad \beta=\begin{pmatrix}
	x&1\\
	0&x
	\end{pmatrix} $;
	\item \label{enumitem3:solsalphbet} \qquad $ \alpha=\begin{pmatrix}
	a&1\\
	0&a
	\end{pmatrix},
\qquad \beta=\begin{pmatrix}
	x&z\\
	0&x
	\end{pmatrix} $.	
\end{enumerate}	
	\item \label{solut2} For each solution in \ref{solut1}, we provide a list of non-trivial $ 2 $-dimensional multiplicative  BiHom-Lie algebras.
We solve the system of equations \eqref{skewbih}, \eqref{jacobbih} and \eqref{mulbih} such that
\begin{list}{$\bullet$}{}	
	\item $  a_{12}=a_{21}=0 $,  $b_{12}=b_{21}=0  $, for \ref{enumitem1:solsalphbet}
	\item  $  a_{12}=a_{21}=0 $, $ a_{22}=a_{11} $,  $b_{21}=0,\,b_{12}=1,\,b_{22}=b_{11}  $, for \ref{enumitem2:solsalphbet}
	\item $ a_{21}=0,\, a_{12}=1,\, $ $ a_{22}=a_{11} $,  $b_{21}=0,\,b_{22}=b_{11}  $, for \ref{enumitem3:solsalphbet}
\end{list}
\item  Fix a BiHom-Lie algebra $ L $ in \ref{solut2} and 	solving the equation \eqref{isomB}  such that $ C_{ij}^{k} $ are the structure constants corresponding to $ L $  and  $ f_{12} = f_{21} = 0   $ (resp. $f_{11} = f_{22} = 0   $) if $  [L,L]\neq<e_2> $ (resp. $  [L,L]=<e_{2}> $ ).
\end{enumerate}
Therefore, we get the following result.
\begin{prop}\label{A1}
 Every  $ 2 $-dimensional multiplicative  BiHom-Lie algebra is isomorphic to one of the following non-isomorphic BiHom-Lie algebras: each  algebra is denoted by $ L_{j}^{i} $ where $ i $ is related to the couple $ (\alpha,\beta) $, $ j $  is the number.
		%============Alg 15 cas 1==============	
	%============Alg 15 cas 1==============	
%%%%%%%%%%%%%%%%%%%%%

\begin{align*}
L_{1}^{1}:[e_{1},e_{1}]&=e_1,&     [e_{1},e_{2}]&=e_{1},
&
[e_2,e_{1}]&=z_1 e_{1}
,&       [e_2,e_{2}]&=0,&\\
\alpha(e_{1})&=0,& \alpha(e_{2})&=be_{2},&\beta(e_{1})&=0,& \beta(e_{2})&=ye_{2}.&   	
\end{align*}
%%%%%%%%%%%%%%%%%%%%%				
\begin{align*}
L_{2}^{1}:[e_{1},e_{1}]&=e_1,&     [e_{1},e_{2}]&=0,
&
[e_2,e_{1}]&= e_{1}
,&       [e_2,e_{2}]&=0,&\\
\alpha(e_{1})&=0,& \alpha(e_{2})&=be_{2},&\beta(e_{1})&=0,& \beta(e_{2})&=ye_{2}.&   	
\end{align*}
%%%%%%%%%%%%%%%%%%%%%				
\begin{align*}
L_{3}^{1}:[e_{1},e_{1}]&=e_1,&     [e_{1},e_{2}]&=0,
&
[e_2,e_{1}]&= 0
,&       [e_2,e_{2}]&=0,&\\
\alpha(e_{1})&=0,& \alpha(e_{2})&=be_{2},&\beta(e_{1})&=0,& \beta(e_{2})&=ye_{2}.&   	
\end{align*}
\begin{align*}
L_{4}^{1}:[e_{1},e_{1}]&=0,&     [e_{1},e_{2}]&=e_{1},
&
[e_2,e_{1}]&= z_1e_{1}
,&       [e_2,e_{2}]&=0,&\\
\alpha(e_{1})&=0,& \alpha(e_{2})&=be_{2},&\beta(e_{1})&=0,& \beta(e_{2})&=ye_{2}.&   	
\end{align*}
%============Alg 15 cas 1==============	
%%%%%%%%%%%%%%%%%%%%%				
\begin{align*}
L_{5}^{1}:[e_{1},e_{1}]&=0,&     [e_{1},e_{2}]&=0,
&
[e_2,e_{1}]&= e_{1}
,&       [e_2,e_{2}]&=0,&\\
\alpha(e_{1})&=0,& \alpha(e_{2})&=be_{2},&\beta(e_{1})&=0,& \beta(e_{2})&=ye_{2}.&   	
\end{align*}

%============Alg 65 cas 1==============	
%%%%%%%%%%%%%%%%%%%%%
\begin{align*}
L_{1}^{2}:[e_{1},e_{1}]&= e_{1},&     [e_{1},e_{2}]&=0,
&
[e_2,e_{1}]&=0
,&       [e_2,e_{2}]&=0,&\\
\alpha(e_{1})&= e_{1},& \alpha(e_{2})&=be_{2},&\beta(e_{1})&=0,& \beta(e_{2})&=ye_{2}.&   	
\end{align*}
%============Alg 66 cas 1==============y_1\neq 0	
%%%%%%%%%%%%%%%%%%%%%
%\begin{align*}
%L_{1}^{3}:[e_{1},e_{1}]&=0 ,&     [e_{1},e_{2}]&=e_{1},
%&
%[e_2,e_{1}]&=z_{1}e_{1}
%,&       [e_2,e_{2}]&=0,&\\
%\alpha(e_{1})&= 0,& \alpha(e_{2})&=e_{2},&\beta(e_{1})&=0,& \beta(e_{2})&=ye_{2}.&   	
%\end{align*}
%============Alg 66 cas 1==============y_1= 0	
%%%%%%%%%%%%%%%%%%%%%
%\begin{align*}
%L_{2}^{3}:[e_{1},e_{1}]&=0 ,&     [e_{1},e_{2}]&=0,
%&
%[e_2,e_{1}]&=e_{1}
%,&       [e_2,e_{2}]&=0,&\\
%\alpha(e_{1})&= 0,& \alpha(e_{2})&=e_{2},&\beta(e_{1})&=0,& \beta(e_{2})&=ye_{2}.&   	
%\end{align*} 		
%============Alg 68 cas 1==============	
%%%%%%%%%%%%%%%%%%%%%
\begin{align*}
L_{1}^{3}:[e_{1},e_{1}]&=0 ,&     [e_{1},e_{2}]&=e_{1},
&
[e_2,e_{1}]&=0
,&       [e_2,e_{2}]&=0,&\\
\alpha(e_{1})&=a e_{1},& \alpha(e_{2})&=e_{2},&\beta(e_{1})&=0,& \beta(e_{2})&=ye_{2}.&   	
\end{align*} 		
%============Alg 67 cas 1==============x1\neq	 0;y1\neq	 0;
%%%%%%%%%%%%%%%%%%%%%

\begin{align*}
L_{1}^{4}:[e_{1},e_{1}]&=e_1,&     [e_{1},e_{2}]&=e_{1},
&
[e_2,e_{1}]&=0
,&       [e_2,e_{2}]&=0,&\\
\alpha(e_{1})&=e_{1},& \alpha(e_{2})&=e_{2},&\beta(e_{1})&=0,& \beta(e_{2})&=ye_{2}.&   	
\end{align*}
%%%%%%%%%%%%%%%%%%%%%
%============Alg 67 cas 1==============x1\neq	 0;y1=	 0;
%%%%%%%%%%%%%%%%%%%%%
%\begin{align*}
%L_{2}^{5}:[e_{1},e_{1}]&=e_1,&     [e_{1},e_{2}]&=0,
%&
%[e_2,e_{1}]&=0
%,&       [e_2,e_{2}]&=0,&\\
%\alpha(e_{1})&=e_{1},& \alpha(e_{2})&=e_{2},&\beta(e_{1})&=0,& \beta(e_{2})&=ye_{2}.&   	
%\end{align*}
%%%%%%%%%%%%%%%%%%%%%
%============Alg 67 cas 1==============x1=	 0
%%%%%%%%%%%%%%%%%%%%%
%\begin{align*}
%L_{3}^{5}:[e_{1},e_{1}]&=0,&     [e_{1},e_{2}]&=e_1,
%&
%[e_2,e_{1}]&=0
%,&       [e_2,e_{2}]&=0,&\\
%\alpha(e_{1})&=e_{1},& \alpha(e_{2})&=e_{2},&\beta(e_{1})&=0,& \beta(e_{2})&=ye_{2}.&   	
%\end{align*}
%%%%%%%%%%%%%%%%%%%%% 		
%============Alg 73 cas 1==============y1\neq	 0
%%%%%%%%%%%%%%%%%%%%%
%\begin{align*}
%L_{1}^{6}:[e_{1},e_{1}]&=0,&     [e_{1},e_{2}]&=e_{1},
%&
%[e_2,e_{1}]&=z_{1}e_{1}
%,&       [e_2,e_{2}]&=0,&\\
%\alpha(e_{1})&=0,& \alpha(e_{2})&=be_{2},&\beta(e_{1})&=0,& \beta(e_{2})&=e_{2}.&   	
%\end{align*}
%============Alg 73 cas 1==============y1=	 0
%%%%%%%%%%%%%%%%%%%%%
%\begin{align*}
%L_{2}^{6}:[e_{1},e_{1}]&=0,&     [e_{1},e_{2}]&=0,
%&
%[e_2,e_{1}]&=e_{1}
%,&       [e_2,e_{2}]&=0,&\\
%\alpha(e_{1})&=0,& \alpha(e_{2})&=be_{2},&\beta(e_{1})&=0,& \beta(e_{2})&=e_{2}.&   	
%\end{align*}		
%%%%%%%%%%%%%  Alg 76  %%%%%%%%%%%%%%
\begin{align*}
L_{1}^{5}:[e_{1},e_{1}]&=e_{1},&     [e_{1},e_{2}]&=0,
&
[e_2,e_{1}]&=0
,&       [e_2,e_{2}]&=0,&\\
\alpha(e_{1})&=0,& \alpha(e_{2})&=be_{2},&\beta(e_{1})&=e_{1},& \beta(e_{2})&=ye_{2}.&   	
\end{align*}		
 %%%%%%%%%%%%%  Alg 80  %%%%%%%%%%%%%%
\begin{align*}
L_{1}^{6}:[e_{1},e_{1}]&=e_{1},&     [e_{1},e_{2}]&=0,
&
[e_2,e_{1}]&=e_{1}
,&       [e_2,e_{2}]&=0,&\\
\alpha(e_{1})&=0,& \alpha(e_{2})&=be_{2},&\beta(e_{1})&=e_{1},& \beta(e_{2})&=e_{2}.&   	
\end{align*}
%\begin{align*}
%L_{2}^{8}:[e_{1},e_{1}]&=e_{1},&     [e_{1},e_{2}]&=0,
%&
%[e_2,e_{1}]&=0
%,&       [e_2,e_{2}]&=0,&\\
%\alpha(e_{1})&=0,& \alpha(e_{2})&=be_{2},&\beta(e_{1})&=e_{1},& \beta(e_{2})&=e_{2}.&   	
%\end{align*}
%\begin{align*}
%L_{3}^{8}:[e_{1},e_{1}]&=0,&     [e_{1},e_{2}]&=0,
%&
%[e_2,e_{1}]&=e_{1}
%,&       [e_2,e_{2}]&=0,&\\
%\alpha(e_{1})&=0,& \alpha(e_{2})&=be_{2},&\beta(e_{1})&=e_{1},& \beta(e_{2})&=e_{2}.&   	
%\end{align*}		
 %%%%%%%%%%%%%  Alg 93  %%%%%%%%%%%%%%
\begin{align*}
L_{1}^{7}:[e_{1},e_{1}]&=0,&     [e_{1},e_{2}]&=0,
&
[e_2,e_{1}]&=e_1
,&       [e_2,e_{2}]&=0,&\\
\alpha(e_{1})&=0,& \alpha(e_{2})&=be_2,&\beta(e_{1})&=xe_{1},& \beta(e_{2})&=e_2.&   	
\end{align*}		
%%%%%%%%%%%%%  Alg 96  %%%%%%%%%%%%%%
\begin{align*}
L_{1}^{8}:[e_{1},e_{1}]&=0,&     [e_{1},e_{2}]&=e_1,
&
[e_2,e_{1}]&=-\frac{x}{a}e_1
,&       [e_2,e_{2}]&=0,&\\
\alpha(e_{1})&=ae_1,& \alpha(e_{2})&=e_2,&\beta(e_{1})&=xe_{1},& \beta(e_{2})&=e_2.&   	
\end{align*}
%%%%%%%%%%%%%  Alg 101  %%%%%%%%%%%%%%
%\begin{align*}
%L_{1}^{9}:[e_{1},e_{1}]&=e_{1},&     [e_{1},e_{2}]&=0,
%&
%[e_2,e_{1}]&=0
%,&       [e_2,e_{2}]&=0,&\\
%\alpha(e_{1})&=e_1,& \alpha(e_{2})&=0,&\beta(e_{1})&=0,& \beta(e_{2})&=ye_2.&   	
%\end{align*}		
%%%%%%%%%%%%%  Alg 108 %%%%%%%%%%%%%% x1\neq0;t2\neq0
\begin{align*}
L_{1}^{9}:[e_{1},e_{1}]&=e_1,&     [e_{1},e_{2}]&=0,
&
[e_2,e_{1}]&=0
,&       [e_2,e_{2}]&=e_2,&\\
\alpha(e_{1})&=e_1,& \alpha(e_{2})&=0,&\beta(e_{1})&=0,& \beta(e_{2})&=e_2.&   	
\end{align*}		
\begin{align*}
L_{1}^{10}:[e_{1},e_{1}]&=0,&     [e_{1},e_{2}]&=e_1+e_2,
&
[e_2,e_{1}]&=-e_1-e_2,
&       [e_2,e_{2}]&=0,&\\
\alpha(e_{1})&=e_{1},& \alpha(e_{2})&=e_2,&\beta(e_{1})&=e_{1},& \beta(e_{2})&=e_{2}.&   	
\end{align*}
		\begin{align*}
		L_{1}^{11}:[e_{1},e_{1}]&=0,&     [e_{1},e_{2}]&=0,
		&
		[e_2,e_{1}]&=e_{1}
		,&       [e_2,e_{2}]&=e_{1}.&\\
		\alpha(e_{1})&=e_{1},& \alpha(e_{2})&=e_{2}\, ,&
		\beta(e_{1})&=0,& \beta(e_{2})&=ze_{1}.&  	
		\end{align*}
				%%%%%%%%%%%%%%%%%%%%%%Alg 3 cas 3 z1\neq0; t1=0	
		\begin{align*}
		L_{2}^{11}:[e_{1},e_{1}]&=0,&     [e_{1},e_{2}]&=0,
		&
		[e_2,e_{1}]&=e_{1}
		,&       [e_2,e_{2}]&=0.&\\
		\alpha(e_{1})&=e_{1},& \alpha(e_{2})&=e_{2}\, ,&
		\beta(e_{1})&=0,& \beta(e_{2})&=e_{1}.&  	
		\end{align*}
		%%%%%%%%%%%%%%%%%%%%%%Alg 3 cas 3 z1=0; t1\neq 0	
\begin{align*}
L_{3}^{11}:[e_{1},e_{1}]&=0,&    [e_{1},e_{2}]&=0,
&
[e_2,e_{1}]&=0
,&       [e_2,e_{2}]&=e_{1}.&\\
\alpha_{34}(e_{1})&=e_{1},& \alpha(e_{2})&=e_{2}\, ,&
\beta(e_{1})&=0,& \beta(e_{2})&=e_{1}.&  	
\end{align*}				
		%%%%%%%%%%%%%%%%%%%%%%Alg 7 cas 3
		\begin{align*}
		L_{1}^{12}:[e_{1},e_{1}]&=0,&     [e_{1},e_{2}]&=e_{1},
		&
		[e_2,e_{1}]&=-e_{1}
		,&       [e_2,e_{2}]&=-e_1.&\\
		\alpha(e_{1})&=e_{1},& \alpha(e_{2})&=e_{2}\, ,&
		\beta(e_{1})&=e_{1},& \beta(e_{2})&=e_{1}+e_{2}.&   	
		\end{align*}
		%%%%%%%%%%%%%%%%%%%%%%Alg 2 cas 4 y1\neq 0
		\begin{align*}
		L_{1}^{13}:[e_{1},e_{1}]&=0,&     [e_{1},e_{2}]&=e_{1},
		&
		[e_2,e_{1}]&=z_1e_{1}
		,&       [e_2,e_{2}]&=t_1e_{1},&\\
		\alpha(e_{1})&=0,& \alpha(e_{2})&=e_{1}\, ,&
		\beta(e_{1})&=0,& \beta(e_{2})&=e_{1}.&   	
		\end{align*}
	%%%%%%%%%%%%%%%%%%%%%%Alg 2 cas 4,y1=0 z1\neq 0
\begin{align*}
L_{2}^{13}:[e_{1},e_{1}]&=0,&     [e_{1},e_{2}]&=0,
&
[e_2,e_{1}]&=e_{1}
,&       [e_2,e_{2}]&=t_1e_{1},&\\
\alpha(e_{1})&=0,& \alpha(e_{2})&=e_{1}\, ,&
\beta(e_{1})&=0,& \beta(e_{2})&=ze_{1}.&   	
\end{align*}		
	%%%%%%%%%%%%%%%%%%%%%%Alg 2 cas 4 y1\neq 0
\begin{align*}
L_{3}^{13}:[e_{1},e_{1}]&=0,&     [e_{1},e_{2}]&=0,
&
[e_2,e_{1}]&=0
,&       [e_2,e_{2}]&=e_{1},&\\
\alpha(e_{1})&=0,& \alpha(e_{2})&=e_{1}\, ,&
\beta(e_{1})&=0,& \beta(e_{2})&=ze_{1}.&   	
\end{align*}		
%==================================			
		%%%%%%%%%%%%%%%%%%%%%%Alg 2 cas 4
	%	\begin{align*}
	%	L_{24}:[e_{1},e_{1}]_{36}&=0&,     [e_{1},e_{2}]_{36}&=0,
	%	&
	%	[e_2,e_{1}]_{36}&=e_{1}
	%	,&       [e_2,e_{2}]_{36}&=t_1e_{1},&\\
	%	\alpha_{36}(e_{1})&=e_{1},& \alpha_{36}(e_{2})&=e_{1}\, ,&
	%	\beta_{36}(e_{1})&=0,& \beta_{36}(e_{2})&=ze_{1}.&   	
	%	\end{align*}
		%%%%%%%%%%%%%%%%%%%%%%Alg 12 cas 4
		\begin{align*}
		L_{1}^{14}:[e_{1},e_{1}]&=0,&     [e_{1},e_{2}]&=0,
		&
		[e_2,e_{1}]&=0
		,&       [e_2,e_{2}]&=e_{1},&\\
		\alpha(e_{1})&=e_{1},& \alpha(e_{2})&=e_{1}+e_{2}\, ,&
		\beta(e_{1})&=0,& \beta(e_{2})&=ze_{1}.&   	
		\end{align*}
		%%%%%%%%%%%%%%%%%%%%%%Alg 17 cas 4,y1\neq 0	
		\begin{align*}
		L_{1}^{15}:[e_{1},e_{1}]&=0,&     [e_{1},e_{2}]&=e_{1},
		&
		[e_2,e_{1}]&=0
		,&       [e_2,e_{2}]&=t_{1}e_{1},&\\
		\alpha(e_{1})&=0,& \alpha(e_{2})&=e_{1}\, ,&
		\beta(e_{1})&=e_{1},& \beta(e_{2})&=e_{2}.&    	
		\end{align*}
		%%%%%%%%%%%%%%%%%%%%%%Alg 17 cas 4 y1=0	
		%\begin{align*}
	%	L_{2}^{22}:[e_{1},e_{1}]&=0&,     [e_{1},e_{2}]_{37}&=0,
	%	&
	%	[e_2,e_{1}]&=0
	%	,&       [e_2,e_{2}]_{37}&=e_{1},&\\
	%	\alpha(e_{1})&=0,& \alpha(e_{2})&=e_{1}\, ,&
	%	\beta(e_{1})&=e_{2},& \beta(e_{2})&=e_{2}.&    	
	%	\end{align*}
	%	%%%%%%%%%%%%%%%%%%%%%%Alg 17 cas 4	
	%	\begin{align*}
	%	L_{28}:[e_{1},e_{1}]_{38}&=0&,     [e_{1},e_{2}]_{38}&=e_{1},
	%	&
	%	[e_2,e_{1}]_{38}&=0
	%	,&       [e_2,e_{2}]_{38}&=0,&\\
	%	\alpha_{38}(e_{1})&=0,& \alpha_{38}(e_{2})&=e_{1}\, ,&
	%	\beta_{38}(e_{1})&=e_{1},& \beta_{38}(e_{2})&=e_{2}.&    	
	%	\end{align*}
	%	\begin{align*}
	%	L_{29}:[e_{1},e_{1}]_{39}&=0&,     [e_{1},e_{2}]_{39}&=0,
	%	&
	%	[e_2,e_{1}]_{39}&=0
	%	,&       [e_2,e_{2}]_{39}&=e_{1},&\\
	%	\alpha_{39}(e_{1})&=0,& \alpha_{39}(e_{2})&=e_{1}\, ,&
	%	\beta_{39}(e_{1})&=e_{1},& \beta_{39}(e_{2})&=e_{2}.&    	
	%	\end{align*}
		%%%%%%%%%%%%%%%%%%%%%%Alg 18 cas 4	
		\begin{align*}
		L_{1}^{16}:[e_{1},e_{1}]&=0,&     [e_{1},e_{2}]&=0,
		&
		[e_2,e_{1}]&=0
		,&       [e_2,e_{2}]&=e_{1},& \\
		\alpha(e_{1})&=0,& \alpha(e_{2})&=e_{1}\, ,&
		\beta(e_{1})&=e_{1},& \beta(e_{2})&=ze_{1}+e_{2}.&   	
		\end{align*}
		%%%%%%%%%%%%%%%%%%%%%%Alg 19 cas 4	
		\begin{align*}
		L_{1}^{17}: [e_{1},e_{1}]&=0, & [e_{1},e_{2}]&=e_{1}, & [e_2,e_{1}]&=-e_{1}, & [e_2,e_{2}]&=(1-z)e_{1},\\
		\alpha(e_{1})&=e_{1},& \alpha(e_{2})&=e_{1}+e_{2}, & \beta(e_{1})&=e_{1},& \beta(e_{2})&=ze_{1}+e_{2}.   	
		\end{align*}
%	\end{enumerate}
	%%%%%%%%%%%%%%%%%%%%%%%%%%%%%%%%%%%%%%	
\end{prop}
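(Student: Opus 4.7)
The plan is to follow to the three-step procedure sketched immediately before the statement, making it rigorous on a case-by-case basis. Fix a basis $(e_1,e_2)$ and encode the BiHom-Lie algebra by the $12$ scalars $C_{ij}^{k}$, $a_{ij}$, $b_{ij}$ with $i,j,k\in\{1,2\}$, as in \eqref{constant}. The axioms of a multiplicative BiHom-Lie algebra translate into the polynomial system \eqref{commute}, \eqref{skewbih}, \eqref{jacobbih}, \eqref{mulbih}, so the classification amounts to solving this system and then cutting it by the equivalence relation coming from \eqref{isomB}.

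For Step 1, solving $\alpha\beta=\beta\alpha$ over $\C$ reduces $\alpha$ to Jordan form and then classifies the $\beta$ commuting with it. In dimension $2$ this gives exactly the three canonical pairs \ref{enumitem1:solsalphbet}, \ref{enumitem2:solsalphbet}, \ref{enumitem3:solsalphbet}. For Step 2, note that \eqref{BiHomJacobi} becomes trivial in dimension $2$ (each summand involves a repeated basis index), so the effective constraints are the skew-symmetry \eqref{skewbih} and the two multiplicativity equations \eqref{mulbih}; substituting each of the three normal forms for $(\alpha,\beta)$ leaves a finite system in the $C_{ij}^{k}$ together with the free parameters $a,b,x,y,z$, whose solutions are the candidate brackets. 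This step is finite but large and is best handed to a computer algebra system.

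For Step 3, one sweeps each candidate by a linear change of basis $f\colon L\to L'$ subject to \eqref{isomB} and the intertwining conditions $\alpha'\circ f=f\circ\alpha$, $\beta'\circ f=f\circ\beta$. The intertwining property restricts $f$ to the centralizer of the relevant Jordan block, which is precisely what allows us to prescribe $f_{12}=f_{21}=0$ when $[L,L]\neq\langle e_2\rangle$, and $f_{11}=f_{22}=0$ in the remaining case $[L,L]=\langle e_2\rangle$. Carrying out these normalizations collapses each family of candidates to a short list of representatives, which after cross-comparison between the three families of pairs $(\alpha,\beta)$ yields the algebras $L_j^i$ of the statement. Non-isomorphism inside the final list is checked by reading off the invariants preserved by \eqref{isomB}: $\dim[L,L]$, the rank and Jordan type of $\alpha$ and $\beta$, and whether $e_1$ or $e_2$ is an eigenvector of the relevant map.

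The hard part is bookkeeping, not conceptual. The computation splits into many sub-cases depending on which of the parameters $a,b,x,y,z$ vanish or equal $1$, and in each sub-case one has to (i) solve a small polynomial system for the $C_{ij}^{k}$, (ii) decide which residual scalars can be absorbed by an admissible base change, and (iii) compare the resulting normal forms across different Jordan types of $(\alpha,\beta)$ to make sure no two algebras $L_j^i$ are secretly isomorphic. The main obstacle is guaranteeing completeness of this enumeration; this is exactly where we invoke the computer algebra system, both to solve the defining system of \eqref{commute}--\eqref{mulbih} and to test the compatibility condition \eqref{isomB} pairwise between the candidates.
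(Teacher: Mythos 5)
Your proposal follows essentially the same route as the paper, which likewise reduces the classification to the three-step procedure (normal forms for commuting pairs $(\alpha,\beta)$, solving the polynomial system \eqref{skewbih}, \eqref{jacobbih}, \eqref{mulbih} for the structure constants, and quotienting by \eqref{isomB} with the stated restrictions on $f$), with the case enumeration delegated to a computer algebra system. Your added observations — that the BiHom-Jacobi identity is automatic in dimension $2$ once twisted skew-symmetry holds, and that invariants such as $\dim[L,L]$ and the Jordan types of $\alpha,\beta$ certify pairwise non-isomorphism — are correct refinements of the same argument rather than a different approach.
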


\begin{cor}
		Every  decomposable $ 2 $-dimensional multiplicative BiHom-Lie algebra $ L $  is   isomorphic to one of these $ 4 $ algebras:
	$L_{3}^{1}  $,
	$ L_{1}^{2} $,
	$L_{1}^{5} $,
	$ L_{1}^{9} $.
\end{cor}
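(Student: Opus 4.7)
The approach is to exploit the complete classification of $2$-dimensional multiplicative BiHom-Lie algebras given in Proposition \ref{A1}: every such algebra is isomorphic to one representative in the list, so it suffices to screen each of them for decomposability and check that exactly the four claimed algebras pass the screening.

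First I would invoke the preceding proposition, which asserts that any decomposable $2$-dimensional multiplicative BiHom-Lie algebra must be nonregular (so at least one of $\alpha,\beta$ fails to be bijective) and must satisfy $L=[L,L]\oplus C(L)$. These necessary conditions immediately discard every representative $L_{j}^{i}$ whose twisting maps $\alpha$ and $\beta$ are both invertible (for example $L_1^{10}$, $L_1^{12}$, $L_1^{17}$), together with those for which the derived algebra $[L,L]$ is zero or coincides with a subspace meeting $C(L)$ nontrivially.

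For each remaining nonregular candidate in the list I would then test decomposability directly by exhibiting, or ruling out, a pair of complementary $1$-dimensional ideals. The four algebras $L_{3}^{1}$, $L_{1}^{2}$, $L_{1}^{5}$, $L_{1}^{9}$ all have bracket supported only on the ``diagonal'' products $[e_i,e_i]$ and have $\alpha, \beta$ acting diagonally, so the lines $\langle e_1\rangle$ and $\langle e_2\rangle$ are visibly stable under both the bracket and the twisting maps, providing an explicit decomposition $L=\langle e_1\rangle\oplus\langle e_2\rangle$. For every other nonregular algebra in the list, either a bracket $[e_i,e_j]$ with $i\neq j$ forces the two coordinate lines to fail to be ideals, or one of $\alpha(e_i),\beta(e_i)$ has a nonzero component outside $\langle e_i\rangle$; in either case no complementary pair of nonzero ideals exists, so the algebra is indecomposable.

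The main obstacle is the bookkeeping, since the list in Proposition \ref{A1} is long. A uniform way to dispose of most cases is the following filter: if $L$ is decomposable, then by the earlier proposition $\dim[L,L]=\dim C(L)=1$, their sum is $L$, and both must be preserved by $\alpha$ and $\beta$; moreover the bracket of $L$ is determined by a single nonzero entry landing in $[L,L]$. Applying this filter to each representative in the list, combined with the necessary nonregularity, precisely selects $L_3^1$, $L_1^2$, $L_1^5$, and $L_1^9$, completing the classification.
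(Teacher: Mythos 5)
Your overall strategy coincides with what the paper does implicitly: the corollary is stated with no separate proof and is meant to follow from the exhaustive list in Proposition \ref{A1} by inspecting each representative for decomposability, using the preceding proposition (decomposable $\Rightarrow$ nonregular and $L=[L,L]\oplus C(L)$) to cut down the work. Your second and third paragraphs carry this out correctly: the four algebras $L_{3}^{1}$, $L_{1}^{2}$, $L_{1}^{5}$, $L_{1}^{9}$ have purely diagonal brackets and diagonal $\alpha,\beta$, so $\langle e_1\rangle\oplus\langle e_2\rangle$ is an explicit decomposition into ideals, and for every other representative any $1$-dimensional ideal is forced to be $\langle e_1\rangle$ (or $\langle e_1+e_2\rangle$ for $L_1^{10}$), so no complementary ideal exists.

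One concrete caveat: the ``uniform filter'' in your last paragraph is stated incorrectly. From $L=[L,L]\oplus C(L)$ you infer $\dim[L,L]=\dim C(L)=1$, but this fails for $L_{1}^{9}$, where $[e_1,e_1]=e_1$ and $[e_2,e_2]=e_2$ give $[L,L]=L$ and $C(L)=\{0\}$, so the decomposition reads $L=L\oplus\{0\}$. Taken literally, your filter would discard one of the four algebras the corollary asserts to be decomposable. This does not invalidate the proof, because your direct verification in the third paragraph already exhibits $L_{1}^{9}=\langle e_1\rangle\oplus\langle e_2\rangle$; but the filter should be weakened (e.g.\ to: each of $[L,L]$ and $C(L)$ is $\alpha$- and $\beta$-invariant and they intersect trivially) or simply dropped in favour of the case-by-case ideal check, which is what actually settles every entry of the list.
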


\begin{remq} $ <e_1+e_2> $ is an ideal of BiHom-Lie algebra $ L_{1}^{10} $. For the others BiHom-Lie algebras $ <e_1> $ is an ideal of $ L_{i}^{j} $. Hence,
	every   $ 2 $-dimensional multiplicative BiHom-Lie algebra is not simple.
\end{remq}

\section{Centroids and derivations of  $ 2 $-dimensional multiplicative BiHom-Lie algebras}
\label{sec:centrderiv2dimmultipBiHomLiealg}
Let  $(L,[\cdot,\cdot ],\alpha,\beta)$ be a $ n $-dimensional multiplicative  BiHom-Lie  algebra. Let $$\displaystyle \alpha^{r}\beta^{l}(e_{j})=\sum_{k=1}^{n}m_{kj}e_{k}. $$
An element $ d $ of $ Der^{(\delta,\mu,\gamma)}_{\alpha^{r}\beta^{l}}(L) $, being a linear transformation of the vector space $ L $,	is represented in a matrix form $ (d_{ij})_{1\leq i,j\leq n} $ corresponding to $\displaystyle d(e_{j})=\sum_{k=1}^{n}d_{kj}e_{k} $, for $ j=1,\dots,n $. According to the definition of the $ (\delta,\mu,\gamma) $-$ \alpha^{r}\beta^{l} $-derivation the entries $ d_{ij} $ of the matrix $ (d_{ij})_{1\leq i,j\leq n} $
must satisfy the following systems $ \mathcal{S} $ of equations:
\begin{align*}
 \displaystyle \sum_{k=1}^{n} d_{ik}a_{kj} &= \sum_{k=1}^{n} a_{ik}d_{kj};\qquad
  \displaystyle \sum_{k=1}^{n} d_{ik}b_{kj} = \sum_{k=1}^{n} b_{ik}d_{kj};\\
\displaystyle \delta\sum_{k=1}^{n} c_{ij}^{k}d_{sk}&-\mu \sum_{k=1}^{n} \sum_{l=1}^{n}d_{ki}m_{lj}c_{kl}^{s} -\gamma   \sum_{k=1}^{n} \sum_{l=1}^{n}d_{lj} m_{ki}c_{kl}^{s}=0,
\end{align*}
where $ (a_{ij})_{1\leq i,j\leq n} $ is the matrix of $ \alpha $, $ (b_{ij})_{1\leq i,j\leq n} $ is the matrix of $ \beta $ and
$( c_{ij}^{k}) $ are the structure constants of $ L $.
%\subsection{Centroids  of  $ 2 $-dimensional multiplicative BiHom-Lie algebras}	
First, let us give the following definitions:
\begin{defn}
A BiHom-Lie algebra  is called characteristically nilpotent (denoted by CN) if the Lie algebra $ Der_{\alpha^{0}\beta^{0}}(L) $ is nilpotent.
\end{defn}
\begin{defn}
Let $ L $ be an indecomposable 	BiHom-Lie algebra. We say $ L $
is small if $\varGamma_{\alpha^{0}\beta^{0}}(L)  $ is generated by central derivation and the scalars.	
The centroid of a decomposable 	BiHom-Lie algebra is small if the centroids of each indecomposable factor are
small.	
\end{defn}
Now we apply the algorithms mention in the previous paragraph to centroid and derivation of $ 2 $-dimensional complex BiHom-Lie
algebras. To find the centroids and derivations of $ 2 $-dimensional complex   BiHom-Lie
algebras  we use the classification results from the previous section. The results are given in the following theorem. Moreover, we give the type of
$ \varGamma_{\alpha^r\beta^l}(L_{i}^{j}) $ and $ Der_{\alpha^r\beta^l}(L_{i}^{j})  $ if $ (r,l)=(0,0) $.
\begin{theorem}
\begin{align*}
L_{1}^{1}:[e_{1},e_{1}]&=e_1,&     [e_{1},e_{2}]&=e_{1},
&
[e_2,e_{1}]&=z_1 e_{1}
,&       [e_2,e_{2}]&=0,&\\
\alpha(e_{1})&=0,& \alpha(e_{2})&=be_{2},&\beta(e_{1})&=0,& \beta(e_{2})&=ye_{2}.&   	
\end{align*}
\begin{center}
	\begin{tabular}{|c|c|c|c|c|c|}	
	\hline
  $ \alpha^r\beta^l $ &&$ \varGamma_{\alpha^r\beta^l}(L_{1}^{1}) $&
  \begin{minipage}{2cm} \vspace{0,3cm} Type of \\ $ \varGamma_{\alpha^0\beta^0}(L_{1}^{1}) $  \\ \end{minipage}  &$ Der_{\alpha^r\beta^l}(L_{1}^{1}) $&CN\\
	\hline
$ (r,l) =(0,0)$&$ z_1=0 $&$ \begin{pmatrix}
c_1&0\\
0&c_2
\end{pmatrix} $	&Not small   &$ \begin{pmatrix}
0&0\\
0&0
\end{pmatrix} $& Yes\\
	\hline
$ (r,l) =(0,0)$&$ z_1\neq0 $&$ \begin{pmatrix}
c_1&0\\
0&c_1
\end{pmatrix} $	&Small    &$ \begin{pmatrix}
0&0\\
0&0
\end{pmatrix} $ &Yes\\

	\hline
$ (r,l) \neq(0,0)$&&$ \begin{pmatrix}
0&0\\
0&c_2
\end{pmatrix} $	&& $ \begin{pmatrix}
0&0\\
0&d_2
\end{pmatrix} $& \\
	\hline
\end{tabular}
\end{center}
%%%%%%%%%%%%%%%%%%%%%				
\begin{align*}
L_{2}^{1}:[e_{1},e_{1}]&=e_1,&     [e_{1},e_{2}]&=0,
&
[e_2,e_{1}]&= e_{1}
,&       [e_2,e_{2}]&=0,&\\
\alpha(e_{1})&=0,& \alpha(e_{2})&=be_{2},&\beta(e_{1})&=0,& \beta(e_{2})&=ye_{2}.&   	
\end{align*}
\begin{center}
	\begin{tabular}{|c|c|c|c|c|c|}	
		\hline
	$ \alpha^r\beta^l $	&$ \varGamma_{\alpha^r\beta^l}(L_{2}^{1}) $&
\begin{minipage}{2cm} \vspace{0,3cm} Type of \\ $ \varGamma_{\alpha^0\beta^0}(L_{1}^{1}) $  \\ \end{minipage} &$ Der_{\alpha^r\beta^l}(L_{2}^{1}) $&CN\\
		\hline
		$ (r,l) =(0,0)$&$ \begin{pmatrix}
		c_1&0\\
		0&c_1
		\end{pmatrix} $	&Small&$ \begin{pmatrix}
		0&0\\
		0&0
		\end{pmatrix} $& Yes\\
		\hline
		$ (r,l) \neq(0,0)$&$ \begin{pmatrix}
		0&0\\
		0&c_2
		\end{pmatrix} $	&& $ \begin{pmatrix}
		0&0\\
		0&d_2
		\end{pmatrix} $& \\
		\hline
	\end{tabular}
\end{center}
%%%%%%%%%%%%%%%%%%%%%				
\begin{align*}
L_{3}^{1}:[e_{1},e_{1}]&=e_1,&     [e_{1},e_{2}]&=0,
&
[e_2,e_{1}]&= 0
,&       [e_2,e_{2}]&=0,&\\
\alpha(e_{1})&=0,& \alpha(e_{2})&=be_{2},&\beta(e_{1})&=0,& \beta(e_{2})&=ye_{2}.&   	
\end{align*}
\begin{center}
	\begin{tabular}{|c|c|c|c|c|}	
		\hline
	$ \alpha^r\beta^l $	&$ \varGamma_{\alpha^r\beta^l}(L_{3}^{1}) $&
\begin{minipage}{2cm} \vspace{0,3cm} Type of \\ $ \varGamma_{\alpha^0\beta^0}(L_{3}^{1}) $ \\ \end{minipage} &$ Der_{\alpha^r\beta^l}(L_{3}^{1}) $&CN\\
	\hline		
%%%%%%%%%		\hline	
$ (r,l) =(0,0)$&$ \begin{pmatrix}
c_1&0\\
0&c_1
\end{pmatrix} $	& Not small&$ \begin{pmatrix}
0&0\\
0&d_2
\end{pmatrix} $&Yes \\
\hline
$ (r,l) \neq(0,0)$&$ \begin{pmatrix}
0&0\\
0&c_2
\end{pmatrix} $	& &$ \begin{pmatrix}
0&0\\
0&d_2
\end{pmatrix} $& \\
 		\hline
\end{tabular}
\end{center}
%%%%%%%%%%%%%
\begin{align*}
L_{4}^{1}:[e_{1},e_{1}]&=0,&     [e_{1},e_{2}]&=e_{1},
&
[e_2,e_{1}]&= z_1e_{1}
,&       [e_2,e_{2}]&=0,&\\
\alpha(e_{1})&=0,& \alpha(e_{2})&=be_{2},&\beta(e_{1})&=0,& \beta(e_{2})&=ye_{2}.&   	
\end{align*}
\begin{center}
	\begin{tabular}{|c|c|c|c|c|c|}	
		\hline
	&	&$ \varGamma_{\alpha^r\beta^l}(L_{4}^{1}) $&
\begin{minipage}{2cm} \vspace{0,3cm} Type of \\ $ \varGamma_{\alpha^0\beta^0}(L_{4}^{1}) $ \\ \end{minipage}
&$ Der_{\alpha^r\beta^l}(L_{4}^{1}) $&CN\\
		\hline		
		%%%%%%%%%		\hline	
		$ (r,l) =(0,0)$&$ z_1=0 $  &$ \begin{pmatrix}
		c_1&0\\
		0&c_2
		\end{pmatrix} $	&Not small&$ \begin{pmatrix}
		d_1&0\\
		0&0
		\end{pmatrix} $&Yes \\
		\hline
		%%%%%%%%%		\hline	
$ (r,l) =(0,0)$&$ z_1\neq 0 $  &$ \begin{pmatrix}
c_1&0\\
0&c_1
\end{pmatrix} $	&Small&$ \begin{pmatrix}
d_1&0\\
0&0
\end{pmatrix} $&Yes \\
\hline
		%%%%%%%%%		\hline	
$ (r,l) \neq (0,0)$ &\begin{minipage}{2cm} \begin{eqnarray*}  z_1 &=& 0; \\ b^ry^l &=& 1 \\ \end{eqnarray*} \end{minipage} &$ \begin{pmatrix}
c_1&0\\
0&c_2
\end{pmatrix} $	&&$ \begin{pmatrix}
d_1&0\\
0&d_2
\end{pmatrix} $& \\
\hline
		%%%%%%%%%		\hline	
$ (r,l) \neq (0,0)$&  $ b^ry^l\neq1 $ &$ \begin{pmatrix}
0&0\\
0&c_2
\end{pmatrix} $	& &$ \begin{pmatrix}
0&0\\
0&d_2
\end{pmatrix} $& \\
\hline
	%%%%%%%%%		\hline	
$ (r,l) \neq (0,0)$& \begin{minipage}{2cm} \begin{eqnarray*} z_1 &\neq & 0; \\ b^ry^l &=& 1 \\ \end{eqnarray*} \end{minipage} &$ \begin{pmatrix}
0&0\\
0&c_2
\end{pmatrix} $	&&$ \begin{pmatrix}
d_1&0\\
0&d_2
\end{pmatrix} $& \\
\hline
	%%%%%%%%%		\hline	
%$ (r,l) \neq (0,0)$&$ z_1\neq 0 $ and  $ b^ry^l\neq 1 $ &$ \begin{pmatrix}
%0&0\\
%0&c_2
%\end{pmatrix} $	&&$ \begin{pmatrix}
%0&0\\
%0&d_2
%\end{pmatrix} $& \\
%\hline
\end{tabular}
\end{center}
%%%%%%%%%%%%%

%============Alg 15 cas 1==============	
%%%%%%%%%%%%%%%%%%%%%				
\begin{align*}
L_{5}^{1}:[e_{1},e_{1}]&=0,&     [e_{1},e_{2}]&=0,
&
[e_2,e_{1}]&= e_{1}
,&       [e_2,e_{2}]&=0,&\\
\alpha(e_{1})&=0,& \alpha(e_{2})&=be_{2},&\beta(e_{1})&=0,& \beta(e_{2})&=ye_{2}.&   	
\end{align*}
\begin{center}
	\begin{tabular}{|c|c|c|c|c|c|c|c|}	
		\hline
		&	&$ \varGamma_{\alpha^r\beta^l}(L_{5}^{1}) $&
\begin{minipage}{2cm} \vspace{0,3cm} Type of \\ $ \varGamma_{\alpha^0\beta^0}(L_{5}^{1}) $ \\ \end{minipage} &$ Der_{\alpha^r\beta^l}(L_{5}^{1}) $&CN\\
		\hline		
		%%%%%%%%%
		$ (r,l) =(0,0)$&  &$ \begin{pmatrix}
		c_1&0\\
		0&c_1
		\end{pmatrix} $	&Small&$ \begin{pmatrix}
		d_1&0\\
		0&0
		\end{pmatrix} $&Yes \\
		\hline		
%%%%%%%%%%%%%			
$ (r,l) \neq (0,0)$& $ b^ry^l=1 $ &$ \begin{pmatrix}
0&0\\
0&c_2
\end{pmatrix} $	&&$ \begin{pmatrix}
d_1&0\\
0&d_2
\end{pmatrix} $& \\
\hline
%%%%%%%%%%%%%
$ (r,l) \neq (0,0)$& $ b^ry^l\neq1 $ &$ \begin{pmatrix}
0&0\\
0&c_2
\end{pmatrix} $	&&$ \begin{pmatrix}
0&0\\
0&d_2
\end{pmatrix} $& \\
\hline
\end{tabular}
\end{center}
%%%%%%%%%%%%%

%============Alg 65 cas 1==============	
%%%%%%%%%%%%%%%%%%%%%
\begin{align*}
L_{1}^{2}:[e_{1},e_{1}]&= e_{1},&     [e_{1},e_{2}]&=0,
&
[e_2,e_{1}]&=0
,&       [e_2,e_{2}]&=0,&\\
\alpha(e_{1})&= e_{1},& \alpha(e_{2})&=be_{2},&\beta(e_{1})&=0,& \beta(e_{2})&=ye_{2}.&   	
\end{align*}
%%%%%%%%%%%%%%%%%%%%%%%%%%%%%%%%%
\begin{center}
	\begin{tabular}{|c|c|c|c|c|c|c|}	
		\hline
			&$ \varGamma_{\alpha^r\beta^l}(L_{1}^{2}) $ &
\begin{minipage}{2cm} \vspace{0,3cm} Type of \\ $ \varGamma_{\alpha^0\beta^0}(L_{1}^{2}) $ \\ \end{minipage} &$ Der_{\alpha^r\beta^l}(L_{1}^{2}) $&CN\\
		\hline		
		%%%%%%%%%
%%%%%%%%%%%%%
$ l=0$& $ \begin{pmatrix}
c_1&0\\
0&c_2
\end{pmatrix} $	&Not small&$ \begin{pmatrix}
0&0\\
0&d_2
\end{pmatrix} $&Yes \\
\hline
%%%%%%%%%%%%%
$ l\neq0$&  $ \begin{pmatrix}
0&0\\
0&c_2
\end{pmatrix} $	&&$ \begin{pmatrix}
0&0\\
0&d_2
\end{pmatrix} $& \\
\hline
\end{tabular}
\end{center}
%%%%%%%%%%%%%
%============Alg 66 cas 1==============y_1\neq 0	
%%%%%%%%%%%%%%%%%%%%%
%\begin{align*}
%L_{1}^{3}:[e_{1},e_{1}]&=0 ,&     [e_{1},e_{2}]&=e_{1},
%&
%[e_2,e_{1}]&=z_{1}e_{1}
%,&       [e_2,e_{2}]&=0,&\\
%\alpha(e_{1})&= 0,& \alpha(e_{2})&=e_{2},&\beta(e_{1})&=0,& \beta(e_{2})&=ye_{2}.&   	
%\end{align*}
%============Alg 66 cas 1==============y_1= 0	
%%%%%%%%%%%%%%%%%%%%%
%\begin{align*}
%L_{2}^{3}:[e_{1},e_{1}]&=0 ,&     [e_{1},e_{2}]&=0,
%&
%[e_2,e_{1}]&=e_{1}
%,&       [e_2,e_{2}]&=0,&\\
%\alpha(e_{1})&= 0,& \alpha(e_{2})&=e_{2},&\beta(e_{1})&=0,& \beta(e_{2})&=ye_{2}.&   	
%\end{align*} 		
%============Alg 68 cas 1==============	
%%%%%%%%%%%%%%%%%%%%%
\begin{align*}
L_{1}^{3}:[e_{1},e_{1}]&=0 ,&     [e_{1},e_{2}]&=e_{1},
&
[e_2,e_{1}]&=0
,&       [e_2,e_{2}]&=0,&\\
\alpha(e_{1})&=a e_{1},& \alpha(e_{2})&=e_{2},&\beta(e_{1})&=0,& \beta(e_{2})&=ye_{2}.&   	
\end{align*}
%%%%%%%%%%%%%%%%%%%%%%%%%%%%%%%%%
\begin{center}
	\begin{tabular}{|c|c|c|c|c|c|c|}	
		\hline
	&	&$ \varGamma_{\alpha^r\beta^l}(L_{1}^{3}) $&
\begin{minipage}{2cm} \vspace{0,3cm} Type of \\ $ \varGamma_{\alpha^0\beta^0}(L_{1}^{3}) $ \\ \end{minipage} &$ Der_{\alpha^r\beta^l}(L_{1}^{3}) $&CN\\
		\hline		
		%%%%%%%%%
		%%%%%%%%%%%%%
		$ l=0$&     &$ \begin{pmatrix}
		c_1&0\\
		0&c_2
		\end{pmatrix} $	&Not small&$ \begin{pmatrix}
		d_1&0\\
		0&0
		\end{pmatrix} $&Yes \\
		\hline
		%%%%%%%%%%%%%
		$ l\neq0$& $ y^l=1 $ &$ \begin{pmatrix}
		c_1&0\\
		0&c_2
		\end{pmatrix} $	&&$ \begin{pmatrix}
		d_1&0\\
		0&d_2
		\end{pmatrix} $& \\
		\hline
		%%%%%%%%%%%%%
$ l\neq0$& $ y^l\neq 1 $ &$ \begin{pmatrix}
0&0\\
0&c_2
\end{pmatrix} $&	&$ \begin{pmatrix}
0&0\\
0&d_2
\end{pmatrix} $& \\
\hline		
	\end{tabular}
\end{center}
%%%%%%%%%%%%%

%============Alg 67 cas 1==============x1\neq	 0;y1\neq	 0;
%%%%%%%%%%%%%%%%%%%%%

\begin{align*}
L_{1}^{4}:[e_{1},e_{1}]&=e_1,&     [e_{1},e_{2}]&=e_{1},
&
[e_2,e_{1}]&=0
,&       [e_2,e_{2}]&=0,&\\
\alpha(e_{1})&=e_{1},& \alpha(e_{2})&=e_{2},&\beta(e_{1})&=0,& \beta(e_{2})&=ye_{2}.&   	
\end{align*}
%%%%%%%%%%%%%%%%%%%%%%%%%%%%%%%%%
\begin{center}
	\begin{tabular}{|c|c|c|c|c|c|c|}	
		\hline
			&$ \varGamma_{\alpha^r\beta^l}(L_{1}^{4}) $&
\begin{minipage}{2cm} \vspace{0,3cm} Type of \\ $ \varGamma_{\alpha^0\beta^0}(L_{1}^{4}) $ \\ \end{minipage} &$ Der_{\alpha^r\beta^l}(L_{1}^{4}) $&CN\\
		\hline
	$ l=0$     &$ \begin{pmatrix}
c_1&0\\
0&c_2
\end{pmatrix} $	&Not small&$ \begin{pmatrix}
0&0\\
0&0
\end{pmatrix} $& Yes\\
\hline
%%%%%%%%%%%%%
		%%%%%%%%%%%%%
$ l\neq0$  &$ \begin{pmatrix}
0&0\\
0&c_2
\end{pmatrix} $	&&$ \begin{pmatrix}
0&0\\
0&d_2
\end{pmatrix} $ &\\
\hline

	\end{tabular}
\end{center}

%%%%%%%%%%%%%%%%%%%%%
%============Alg 67 cas 1==============x1\neq	 0;y1=	 0;
%%%%%%%%%%%%%%%%%%%%%
%\begin{align*}
%L_{2}^{5}:[e_{1},e_{1}]&=e_1,&     [e_{1},e_{2}]&=0,
%&
%[e_2,e_{1}]&=0
%,&       [e_2,e_{2}]&=0,&\\
%\alpha(e_{1})&=e_{1},& \alpha(e_{2})&=e_{2},&\beta(e_{1})&=0,& \beta(e_{2})&=ye_{2}.&   	
%\end{align*}
%%%%%%%%%%%%%%%%%%%%%
%============Alg 67 cas 1==============x1=	 0
%%%%%%%%%%%%%%%%%%%%%
%\begin{align*}
%L_{3}^{5}:[e_{1},e_{1}]&=0,&     [e_{1},e_{2}]&=e_1,
%&
%[e_2,e_{1}]&=0
%,&       [e_2,e_{2}]&=0,&\\
%\alpha(e_{1})&=e_{1},& \alpha(e_{2})&=e_{2},&\beta(e_{1})&=0,& \beta(e_{2})&=ye_{2}.&   	
%\end{align*}
%%%%%%%%%%%%%%%%%%%%% 		
%============Alg 73 cas 1==============y1\neq	 0
%%%%%%%%%%%%%%%%%%%%%
%\begin{align*}
%L_{1}^{6}:[e_{1},e_{1}]&=0,&     [e_{1},e_{2}]&=e_{1},
%&
%[e_2,e_{1}]&=z_{1}e_{1}
%,&       [e_2,e_{2}]&=0,&\\
%\alpha(e_{1})&=0,& \alpha(e_{2})&=be_{2},&\beta(e_{1})&=0,& \beta(e_{2})&=e_{2}.&   	
%\end{align*}
%============Alg 73 cas 1==============y1=	 0
%%%%%%%%%%%%%%%%%%%%%
%\begin{align*}
%L_{2}^{6}:[e_{1},e_{1}]&=0,&     [e_{1},e_{2}]&=0,
%&
%[e_2,e_{1}]&=e_{1}
%,&       [e_2,e_{2}]&=0,&\\
%\alpha(e_{1})&=0,& \alpha(e_{2})&=be_{2},&\beta(e_{1})&=0,& \beta(e_{2})&=e_{2}.&   	
%\end{align*}		
%%%%%%%%%%%%%  Alg 76  %%%%%%%%%%%%%%
\begin{align*}
L_{1}^{5}:[e_{1},e_{1}]&=e_{1},&     [e_{1},e_{2}]&=0,
&
[e_2,e_{1}]&=0
,&       [e_2,e_{2}]&=0,&\\
\alpha(e_{1})&=0,& \alpha(e_{2})&=be_{2},&\beta(e_{1})&=e_{1},& \beta(e_{2})&=ye_{2}.&   	
\end{align*}
%%%%%%%%%%%%%%%%%%%%%%%%%%%%%%%%%
\begin{center}
	\begin{tabular}{|c|c|c|c|c|c|c|}	
		\hline
		&$ \varGamma_{\alpha^r\beta^l}(L_{1}^{5}) $&
\begin{minipage}{2cm} \vspace{0,3cm} Type of \\ $ \varGamma_{\alpha^0\beta^0}(L_{1}^{5}) $ \\ \end{minipage} &$ Der_{\alpha^r\beta^l}(L_{1}^{5}) $&CN\\
		\hline
		$ r=0$     &$ \begin{pmatrix}
		c_1&0\\
		0&c_2
		\end{pmatrix} $	&Not small&$ \begin{pmatrix}
		0&0\\
		0&d_2
		\end{pmatrix} $& \\
		\hline
		%%%%%%%%%%%%%
		%%%%%%%%%%%%%
		$ r\neq0$  &$ \begin{pmatrix}
		0&0\\
		0&c_2
		\end{pmatrix} $	&&$ \begin{pmatrix}
		0&0\\
		0&d_2
		\end{pmatrix} $ &\\
		\hline	
	\end{tabular}
\end{center}	
%%%%%%%%%%%%%  Alg 80  %%%%%%%%%%%%%%
\begin{align*}
L_{1}^{6}:[e_{1},e_{1}]&=e_{1},&     [e_{1},e_{2}]&=0,
&
[e_2,e_{1}]&=e_{1}
,&       [e_2,e_{2}]&=0,&\\
\alpha(e_{1})&=0,& \alpha(e_{2})&=be_{2},&\beta(e_{1})&=e_{1},& \beta(e_{2})&=e_{2}.&   	
\end{align*}
%%%%%%%%%%%%%%%%%%%%%%%%%%%%%%%%%
\begin{center}
	\begin{tabular}{|c|c|c|c|c|c|c|}	
		\hline
		&$ \varGamma_{\alpha^r\beta^l}(L_{1}^{6}) $&
\begin{minipage}{2cm} \vspace{0,3cm} Type of \\ $ \varGamma_{\alpha^0\beta^0}(L_{1}^{6}) $ \\ \end{minipage} &$ Der_{\alpha^r\beta^l}(L_{1}^{6}) $&Yes\\
		\hline
		$ r=0$     &$ \begin{pmatrix}
		c_1&0\\
		0&c_1
		\end{pmatrix} $	&Small&$ \begin{pmatrix}
		0&0\\
		0&d_2
		\end{pmatrix} $&Yes \\
		\hline
		%%%%%%%%%%%%%
		%%%%%%%%%%%%%
		$ r\neq0$  &$ \begin{pmatrix}
		0&0\\
		0&c_2
		\end{pmatrix} $	&&$ \begin{pmatrix}
		0&0\\
		0&0
		\end{pmatrix} $& \\
		\hline	
	\end{tabular}
\end{center}
%\begin{align*}
%L_{2}^{8}:[e_{1},e_{1}]&=e_{1},&     [e_{1},e_{2}]&=0,
%&
%[e_2,e_{1}]&=0
%,&       [e_2,e_{2}]&=0,&\\
%\alpha(e_{1})&=0,& \alpha(e_{2})&=be_{2},&\beta(e_{1})&=e_{1},& \beta(e_{2})&=e_{2}.&   	
%\end{align*}
%\begin{align*}
%L_{3}^{8}:[e_{1},e_{1}]&=0,&     [e_{1},e_{2}]&=0,
%&
%[e_2,e_{1}]&=e_{1}
%,&       [e_2,e_{2}]&=0,&\\
%\alpha(e_{1})&=0,& \alpha(e_{2})&=be_{2},&\beta(e_{1})&=e_{1},& \beta(e_{2})&=e_{2}.&   	
%\end{align*}		
%%%%%%%%%%%%%  Alg 93  %%%%%%%%%%%%%%
\begin{align*}
L_{1}^{7}:[e_{1},e_{1}]&=0,&     [e_{1},e_{2}]&=0,
&
[e_2,e_{1}]&=e_1
,&       [e_2,e_{2}]&=0,&\\
\alpha(e_{1})&=0,& \alpha(e_{2})&=be_2,&\beta(e_{1})&=xe_{1},\, x\neq 0& \beta(e_{2})&=e_2.&   	
\end{align*}
%%%%%%%%%%%%%%%%%%%%%%%%%%%%%%%%%
\begin{center}
	\begin{tabular}{|c|c|c|c|c|c|c|}	
		\hline
	&	&$ \varGamma_{\alpha^r\beta^l}(L_{1}^{7}) $&
\begin{minipage}{2cm} \vspace{0,3cm} Type of \\ $ \varGamma_{\alpha^0\beta^0}(L_{1}^{7}) $ \\ \end{minipage} &$ Der_{\alpha^r\beta^l}(L_{1}^{7}) $&CN\\
	%	\hline
	%	$ r=0$ &$ x=0 $    &$ \begin{pmatrix}
		%0&0\\
	%	0&c_2
		%\end{pmatrix} $	&&$ \begin{pmatrix}
		%d_1&0\\
	%	0&d_2
	%	\end{pmatrix} $& \\
		\hline
		%%%%%%%%%%%%%
		$ r=0$ &&$ \begin{pmatrix}
c_1&0\\
0&\frac{c_1}{x^l}
\end{pmatrix} $	&Small&$ \begin{pmatrix}
d_1&0\\
0&0
\end{pmatrix} $&Yes \\
\hline
%%%%%%%%%%%%%		
		%%%%%%%%%%%%%
		$ r\neq0$  & $ x=1 $  &$ \begin{pmatrix}
		0&0\\
		0&c_2
		\end{pmatrix} $	&&$ \begin{pmatrix}
		d_1&0\\
		0&d_2
		\end{pmatrix} $& \\
		\hline
		%%%%%%%%%%%%%
$ r\neq0$  &$ x\neq 1 $& $ \begin{pmatrix}
0&0\\
0&c_2
\end{pmatrix} $	&&$ \begin{pmatrix}
0&0\\
0&d_2
\end{pmatrix} $ &\\
\hline			
	\end{tabular}
\end{center}		
%%%%%%%%%%%%%  Alg 96  %%%%%%%%%%%%%%
\begin{align*}
L_{1}^{8}:[e_{1},e_{1}]&=0,&     [e_{1},e_{2}]&=e_1,
&
[e_2,e_{1}]&=-\frac{x}{a}e_1
,&       [e_2,e_{2}]&=0,&\\
\alpha(e_{1})&=ae_1,& \alpha(e_{2})&=e_2,&\beta(e_{1})&=xe_{1},& \beta(e_{2})&=e_2.&   	
\end{align*}
\begin{center}
	\begin{tabular}{|c|c|c|c|}	
		\hline
			$ \varGamma_{\alpha^r\beta^l}(L_{1}^{8}) $&
\begin{minipage}{2cm} \vspace{0,3cm} Type of \\ $ \varGamma_{\alpha^0\beta^0}(L_{1}^{8}) $ \\ \end{minipage} &$ Der_{\alpha^r\beta^l}(L_{1}^{8}) $&CN\\
		\hline
 $ \begin{pmatrix}
c_1&0\\
0&\frac{c_1}{a^rx^l}
\end{pmatrix} $	&Small&$ \begin{pmatrix}
d_1&0\\
0&0
\end{pmatrix} $&Yes \\*[0,3cm]
\hline
	\end{tabular}
\end{center}
%%%%%%%%%%%%%  Alg 101  %%%%%%%%%%%%%%
%\begin{align*}
%L_{1}^{9}:[e_{1},e_{1}]&=e_{1},&     [e_{1},e_{2}]&=0,
%&
%[e_2,e_{1}]&=0
%,&       [e_2,e_{2}]&=0,&\\
%\alpha(e_{1})&=e_1,& \alpha(e_{2})&=0,&\beta(e_{1})&=0,& \beta(e_{2})&=ye_2.&   	
%\end{align*}		
%%%%%%%%%%%%%  Alg 108 %%%%%%%%%%%%%% x1\neq0;t2\neq0
\begin{align*}
L_{1}^{9}:[e_{1},e_{1}]&=e_1,&     [e_{1},e_{2}]&=0,
&
[e_2,e_{1}]&=0
,&       [e_2,e_{2}]&=e_2,&\\
\alpha(e_{1})&=e_1,& \alpha(e_{2})&=0,&\beta(e_{1})&=0,& \beta(e_{2})&=e_2.&   	
\end{align*}
%%%%%%%%%%%%%%%%%%%%%%%%%%%%%%%%%
\begin{center}
	\begin{tabular}{|c|c|c|c|c|}	
	\hline
	&	$ \varGamma_{\alpha^r\beta^l}(L_{1}^{9}) $&
\begin{minipage}{2cm} \vspace{0,3cm} Type of \\ $ \varGamma_{\alpha^0\beta^0}(L_{1}^{9}) $ \\ \end{minipage} &$ Der_{\alpha^r\beta^l}(L_{1}^{9}) $&\\
	\hline
	%%%%%%%%%%%%%%%%
	$ r=0,\,l=0$ &  $ \begin{pmatrix}
	c_1&0\\
	0&c_1
	\end{pmatrix} $	&Not small&$ \begin{pmatrix}
	0&0\\
	0&0
	\end{pmatrix} $&Yes \\
	\hline
	%%%%%%%%%%%%%%%%
$ r=0, \, l\neq 0 $    &$ \begin{pmatrix}
0&0\\
0&0
\end{pmatrix} $	&&$ \begin{pmatrix}
0&0\\
0&0
\end{pmatrix} $& \\
\hline
	%%%%%%%%%%%%%%%%
$ r\neq0 $    &$ \begin{pmatrix}
0&0\\
0&c_2
\end{pmatrix} $	&&$ \begin{pmatrix}
0&0\\
0&d_2
\end{pmatrix} $& \\
\hline
	\end{tabular}
\end{center}

\begin{align*}
L_{1}^{10}:[e_{1},e_{1}]&=0,&     [e_{1},e_{2}]&=e_1+e_2,
&
[e_2,e_{1}]&=-e_1-e_2,
&       [e_2,e_{2}]&=0,&\\
\alpha(e_{1})&=e_{1},& \alpha(e_{2})&=e_2,&\beta(e_{1})&=e_{1},& \beta(e_{2})&=e_{2}.&   	
\end{align*}
%%%%%%%%%%%%%%%%%%%%%%%%%%%%%%%%%
\begin{center}
	\begin{tabular}{|c|c|c|c|}	
		\hline
			$ \varGamma_{\alpha^r\beta^l}(L_{1}^{10}) $&
\begin{minipage}{2cm} \vspace{0,3cm} Type of \\ $ \varGamma_{\alpha^0\beta^0}(L_{1}^{10}) $ \\ \end{minipage} &$ Der_{\alpha^r\beta^l}(L_{1}^{10}) $&CN\\
		\hline
		%%%%%%%%%%%%%%%%
    $ \begin{pmatrix}
c_1&0\\
0&c_1
\end{pmatrix} $	&Small&$ \begin{pmatrix}
d_1&d_2\\
d_1&d_2
\end{pmatrix} $&No \\
\hline
\end{tabular}
\end{center}

\begin{align*}
L_{1}^{11}:[e_{1},e_{1}]&=0,&     [e_{1},e_{2}]&=0,
&
[e_2,e_{1}]&=e_{1}
,&       [e_2,e_{2}]&=e_{1}.&\\
\alpha(e_{1})&=e_{1},& \alpha(e_{2})&=e_{2}\, ,&
\beta(e_{1})&=0,& \beta(e_{2})&=ze_{1}.&  	
\end{align*}
\begin{center}
	\begin{tabular}{|c|c|c|c|c|}	
		\hline
		&	$ \varGamma_{\alpha^r\beta^l}(L_{1}^{11}) $&
\begin{minipage}{2cm} \vspace{0,3cm} Type of \\ $ \varGamma_{\alpha^0\beta^0}(L_{1}^{11}) $ \\ \end{minipage} &$ Der_{\alpha^r\beta^l}(L_{1}^{11}) $&CN\\
		\hline
		%%%%%%%%%%%%%%%%
	%%%%%%%%%%%%%%%%
$ l=0 $    &$ \begin{pmatrix}
c_1&c_2\\
0&c_1
\end{pmatrix} $	&Not small&$ \begin{pmatrix}
0&0\\
0&0
\end{pmatrix} $&Yes \\
\hline
	%%%%%%%%%%%%%%%%
$ l\geq1 $    &$ \begin{pmatrix}
0&c_2\\
0&0
\end{pmatrix} $	&&$ \begin{pmatrix}
0&d_2\\
0&0
\end{pmatrix} $& \\
\hline
\end{tabular}
\end{center}

%%%%%%%%%%%%%%%%%%%%%%Alg 3 cas 3 z1\neq0; t1=0	
\begin{align*}
L_{2}^{11}:[e_{1},e_{1}]&=0,&     [e_{1},e_{2}]&=0,
&
[e_2,e_{1}]&=e_{1}
,&       [e_2,e_{2}]&=0.&\\
\alpha_{34}(e_{1})&=e_{1},& \alpha(e_{2})&=e_{2}\, ,&
\beta(e_{1})&=0,& \beta(e_{2})&=e_{1}.&  	
\end{align*}
\begin{center}
	\begin{tabular}{|c|c|c|c|c|}	
		\hline
		&	$ \varGamma_{\alpha^r\beta^l}(L_{2}^{11}) $&
\begin{minipage}{2cm} \vspace{0,3cm} Type of \\ $ \varGamma_{\alpha^0\beta^0}(L_{2}^{11}) $ \\ \end{minipage} &$ Der_{\alpha^r\beta^l}(L_{2}^{11}) $&CN\\
		\hline
		%%%%%%%%%%%%%%%%
		%%%%%%%%%%%%%%%%
		$ l=0 $    &$ \begin{pmatrix}
		c_1&c_2\\
		0&c_1
		\end{pmatrix} $	&Not small&$ \begin{pmatrix}
		0&0\\
		0&0
		\end{pmatrix} $&Yes \\
		\hline
		%%%%%%%%%%%%%%%%
		$ l\geq1 $    &$ \begin{pmatrix}
		0&c_2\\
		0&0
		\end{pmatrix} $	&&$ \begin{pmatrix}
		0&d_2\\
		0&0
		\end{pmatrix} $& \\
		\hline
	\end{tabular}
\end{center}
%%%%%%%%%%%%%%%%%%%%%%Alg 3 cas 3 z1=0; t1\neq 0	
\begin{align*}
L_{3}^{11}:[e_{1},e_{1}]&=0,&     [e_{1},e_{2}]&=0,
&
[e_2,e_{1}]&=0
,&       [e_2,e_{2}]&=e_{1}.&\\
\alpha_{34}(e_{1})&=e_{1},& \alpha(e_{2})&=e_{2}\, ,&
\beta(e_{1})&=0,& \beta(e_{2})&=e_{1}.&  	
\end{align*}
\begin{center}
	\begin{tabular}{|c|c|c|c|c|}	
		\hline
		&	$ \varGamma_{\alpha^r\beta^l}(L_{3}^{11}) $&
\begin{minipage}{2cm} \vspace{0,3cm} Type of \\ $ \varGamma_{\alpha^0\beta^0}(L_{3}^{11}) $ \\ \end{minipage} &$ Der_{\alpha^r\beta^l}(L_{3}^{11}) $&CN\\
		\hline
		%%%%%%%%%%%%%%%%
		%%%%%%%%%%%%%%%%
		$ l=0 $    &$ \begin{pmatrix}
		c_1&c_2\\
		0&c_1
		\end{pmatrix} $	&Small&$ \begin{pmatrix}
		0&d_2\\
		0&0
		\end{pmatrix} $&Yes \\
		\hline
		%%%%%%%%%%%%%%%%
		$ l\geq1 $    &$ \begin{pmatrix}
		0&c_2\\
		0&0
		\end{pmatrix} $	&&$ \begin{pmatrix}
		0&d_2\\
		0&0
		\end{pmatrix} $& \\
		\hline
	\end{tabular}
\end{center}				
%%%%%%%%%%%%%%%%%%%%%%Alg 7 cas 3
\begin{align*}
L_{1}^{12}:[e_{1},e_{1}]&=0,&     [e_{1},e_{2}]&=e_{1},
&
[e_2,e_{1}]&=-e_{1}
,&       [e_2,e_{2}]&=-e_1.&\\
\alpha(e_{1})&=e_{1},& \alpha(e_{2})&=e_{2}\, ,&
\beta(e_{1})&=e_{1},& \beta(e_{2})&=e_{1}+e_{2}.&   	
\end{align*}
\begin{center}
	\begin{tabular}{|c|c|c|c|c|}	
		\hline
		&	$ \varGamma_{\alpha^r\beta^l}(L_{1}^{12}) $&
\begin{minipage}{2cm} \vspace{0,3cm} Type of \\ $ \varGamma_{\alpha^0\beta^0}(L_{1}^{12}) $ \\ \end{minipage} &$ Der_{\alpha^r\beta^l}(L_{1}^{12}) $&CN\\
		\hline
		%%%%%%%%%%%%%%%%
		%%%%%%%%%%%%%%%%
		$ l=0 $    &$ \begin{pmatrix}
		c_1&0\\
		0&c_1
		\end{pmatrix} $	&Small&$ \begin{pmatrix}
		0&d_2\\
		0&0
		\end{pmatrix} $&Yes \\
		\hline
		%%%%%%%%%%%%%%%%
		$ l\geq1 $    &$ \begin{pmatrix}
		c_1&lc_1\\
		0&c_1
		\end{pmatrix} $	&&$ \begin{pmatrix}
		0&d_2\\
		0&0
		\end{pmatrix} $& \\
		\hline
	\end{tabular}
\end{center}	

%%%%%%%%%%%%%%%%%%%%%%Alg 2 cas 4 y1\neq 0
\begin{align*}
L_{1}^{13}:[e_{1},e_{1}]&=0,&     [e_{1},e_{2}]&=e_{1},
&
[e_2,e_{1}]&=z_1e_{1}
,&       [e_2,e_{2}]&=t_1e_{1},&\\
\alpha(e_{1})&=0,& \alpha(e_{2})&=e_{1}\, ,&
\beta(e_{1})&=0,& \beta(e_{2})&=ze_{1}.&   	
\end{align*}
\begin{center}
	\begin{tabular}{|c|c|c|c|c|c|}	
		\hline
	&	&	$ \varGamma_{\alpha^r\beta^l}(L_{1}^{13}) $&
\begin{minipage}{2cm} \vspace{0,3cm} Type of \\ $ \varGamma_{\alpha^0\beta^0}(L_{1}^{13}) $ \\ \end{minipage} &$ Der_{\alpha^r\beta^l}(L_{1}^{13}) $&CN\\
	%	\hline
		%%%%%%%%%%%%%%%%
%$(r,l)\in \{(0,1),(1,0)\}  $&    &$ \begin{pmatrix}
%0&c_2\\
%0&0
%\end{pmatrix} $&	&$ \begin{pmatrix}
%0&d_2\\
%0&0
%\end{pmatrix} $& \\
\hline
		%%%%%%%%%%%%%%%%
$r=l=0  $ &$ z_1=-1 $   &$ \begin{pmatrix}
c_1&0\\
0&c_1
\end{pmatrix} $	&Small&$ \begin{pmatrix}
0&d_2\\
0&0
\end{pmatrix} $&Yes \\
\hline
		%%%%%%%%%%%%%%%%
$r=l=0  $ &$ z_1=0 $   &$ \begin{pmatrix}
c_1&0\\
0&c_1
\end{pmatrix} $	&Small&$ \begin{pmatrix}
0&0\\
0&0
\end{pmatrix} $&Yes \\
\hline
		%%%%%%%%%%%%%%%%
$r=l=0  $ &$ z_1\neq -1 $   &$ \begin{pmatrix}
c_1&0\\
0&c_1
\end{pmatrix} $	&Small&$ \begin{pmatrix}
0&0\\
0&0
\end{pmatrix} $ &Yes\\
\hline
		%%%%%%%%%%%%%%%%
		%%%%%%%%%%%%%%%%
\small{$(r,l)\in \{(0,1),(1,0)\}  $} &    &$ \begin{pmatrix}
0&c_2\\
0&0
\end{pmatrix} $&	&$ \begin{pmatrix}
0&d_2\\
0&0
\end{pmatrix} $& \\
\hline		
$r>1,\,l>1  $ &   &$ \begin{pmatrix}
0&c_2\\
0&0
\end{pmatrix} $&	&$ \begin{pmatrix}
0&d_2\\
0&0
\end{pmatrix} $& \\
\hline
	\end{tabular}
\end{center}
%%%%%%%%%%%%%%%%%%%%%%Alg 2 cas 4,y1=0 z1\neq 0
\begin{align*}
L_{2}^{13}:[e_{1},e_{1}]&=0,&     [e_{1},e_{2}]&=0,
&
[e_2,e_{1}]&=e_{1}
,&       [e_2,e_{2}]&=t_1e_{1},&\\
\alpha(e_{1})&=0,& \alpha(e_{2})&=e_{1}\, ,&
\beta(e_{1})&=0,& \beta(e_{2})&=ze_{1}.&   	
\end{align*}	
\begin{center}
	\begin{tabular}{|c|c|c|c|c|}	
		\hline
		&		$ \varGamma_{\alpha^r\beta^l}(L_{2}^{13}) $&
\begin{minipage}{2cm} \vspace{0,3cm} Type of \\ $ \varGamma_{\alpha^0\beta^0}(L_{2}^{13}) $ \\ \end{minipage} &$ Der_{\alpha^r\beta^l}(L_{2}^{13}) $&CN\\
		\hline
		%%%%%%%%%%%%%%%%
	%%%%%%%%%%%%%%%%
$r=l=0  $   &$ \begin{pmatrix}
c_1&c_2\\
0&c_1
\end{pmatrix} $	&Not small&$ \begin{pmatrix}
0&0\\
0&0
\end{pmatrix} $&Yes \\
\hline
%%%%%%%%%%%%%%%%
	%%%%%%%%%%%%%%%%
$(r,l)\in \{(0,1),(1,0)\}  $    &$ \begin{pmatrix}
0&c_2\\
0&0
\end{pmatrix} $	&&$ \begin{pmatrix}
0&d_2\\
0&0
\end{pmatrix} $& \\
\hline
		%%%%%%%%%%%%%%%%
$r>1,\,l>1  $  &$ \begin{pmatrix}
0&c_2\\
0&0
\end{pmatrix} $	&&$ \begin{pmatrix}
0&d_2\\
0&0
\end{pmatrix} $& \\
\hline
\end{tabular}
\end{center}	
%%%%%%%%%%%%%%%%%%%%%%Alg 2 cas 4 y1\neq 0
\begin{align*}
L_{3}^{13}:[e_{1},e_{1}]&=0,&     [e_{1},e_{2}]&=0,
&
[e_2,e_{1}]&=0
,&       [e_2,e_{2}]&=e_{1},&\\
\alpha(e_{1})&=0,& \alpha(e_{2})&=e_{1}\, ,&
\beta(e_{1})&=0,& \beta(e_{2})&=ze_{1}.&   	
\end{align*}
\begin{center}
	\begin{tabular}{|c|c|c|c|c|}	
		\hline
		&		$ \varGamma_{\alpha^r\beta^l}(L_{3}^{13}) $&
\begin{minipage}{2cm} \vspace{0,3cm} Type of \\ $ \varGamma_{\alpha^0\beta^0}(L_{3}^{13}) $ \\ \end{minipage} &$ Der_{\alpha^r\beta^l}(L_{3}^{13}) $&CN\\
		\hline
		%%%%%%%%%%%%%%%%
		%%%%%%%%%%%%%%%%
		$r=l=0  $   &$ \begin{pmatrix}
		c_1&c_2\\
		0&c_1
		\end{pmatrix} $	&Small&$ \begin{pmatrix}
		0&d_2\\
		0&0
		\end{pmatrix} $&Yes \\
		\hline
		%%%%%%%%%%%%%%%%
		%%%%%%%%%%%%%%%%
		$(r,l)\neq (0,0)  $    &$ \begin{pmatrix}
		0&c_2\\
		0&0
		\end{pmatrix} $	&&$ \begin{pmatrix}
		0&d_2\\
		0&0
		\end{pmatrix} $ &\\
		\hline
		%%%%%%%%%%%%%%%%
	%	$r>1,\,l>1  $  &$ \begin{pmatrix}
	%	0&c_2\\
	%	0&0
	%	\end{pmatrix} $	&&$ \begin{pmatrix}
	%	0&d_2\\
	%	0&0
	%	\end{pmatrix} $& \\
		%\hline
	\end{tabular}
\end{center}			
%==================================			
%%%%%%%%%%%%%%%%%%%%%%Alg 2 cas 4
%	\begin{align*}
%	L_{24}:[e_{1},e_{1}]_{36}&=0&,     [e_{1},e_{2}]_{36}&=0,
%	&
%	[e_2,e_{1}]_{36}&=e_{1}
%	,&       [e_2,e_{2}]_{36}&=t_1e_{1},&\\
%	\alpha_{36}(e_{1})&=e_{1},& \alpha_{36}(e_{2})&=e_{1}\, ,&
%	\beta_{36}(e_{1})&=0,& \beta_{36}(e_{2})&=ze_{1}.&   	
%	\end{align*}
%%%%%%%%%%%%%%%%%%%%%%Alg 12 cas 4
\begin{align*}
L_{1}^{14}:[e_{1},e_{1}]&=0,&     [e_{1},e_{2}]&=0,
&
[e_2,e_{1}]&=0
,&       [e_2,e_{2}]&=e_{1},&\\
\alpha(e_{1})&=e_{1},& \alpha(e_{2})&=e_{1}+e_{2}\, ,&
\beta(e_{1})&=0,& \beta(e_{2})&=ze_{1}.&   	
\end{align*}
\begin{center}
	\begin{tabular}{|c|c|c|c|c|}	
		\hline
		&		$ \varGamma_{\alpha^r\beta^l}(L_{1}^{14}) $&
\begin{minipage}{2cm} \vspace{0,3cm} Type of \\ $ \varGamma_{\alpha^0\beta^0}(L_{1}^{14}) $ \\ \end{minipage} &$ Der_{\alpha^r\beta^l}(L_{1}^{14}) $&CN\\
		\hline
		%%%%%%%%%%%%%%%%
		%%%%%%%%%%%%%%%%
		$r=l=0  $   &$ \begin{pmatrix}
		c_1&c_2\\
		0&c_1
		\end{pmatrix} $	&Not small&$ \begin{pmatrix}
		0&d_2\\
		0&0
		\end{pmatrix} $&Yes \\
		\hline
		%%%%%%%%%%%%%%%%
		%%%%%%%%%%%%%%%%
		$l\geq 1 $    &$ \begin{pmatrix}
		0&c_2\\
		0&0
		\end{pmatrix} $	&&$ \begin{pmatrix}
		0&d_2\\
		0&0
		\end{pmatrix} $ &\\
		\hline
	\end{tabular}
\end{center}
%%%%%%%%%%%%%%%%%%%%%%Alg 17 cas 4,y1\neq 0	
\begin{align*}
L_{1}^{15}:[e_{1},e_{1}]&=0,&     [e_{1},e_{2}]&=e_{1},
&
[e_2,e_{1}]&=0
,&       [e_2,e_{2}]&=t_{1}e_{1},&\\
\alpha(e_{1})&=0,& \alpha(e_{2})&=e_{1}\, ,&
\beta(e_{1})&=e_{1},& \beta(e_{2})&=e_{2}.&    	
\end{align*}
\begin{center}
	\begin{tabular}{|c|c|c|c|c|}	
		\hline
		&		$ \varGamma_{\alpha^r\beta^l}(L_{1}^{15}) $&
\begin{minipage}{2cm} \vspace{0,3cm} Type of \\ $ \varGamma_{\alpha^0\beta^0}(L_{1}^{15}) $ \\ \end{minipage} &$ Der_{\alpha^r\beta^l}(L_{1}^{15}) $&CN\\
		\hline
		%%%%%%%%%%%%%%%%
		%%%%%%%%%%%%%%%%
		$r=0 ,\,l\geq 0 $   &$ \begin{pmatrix}
		c_1&0\\
		0&c_1
		\end{pmatrix} $	&Small&$ \begin{pmatrix}
		0&0\\
		0&0
		\end{pmatrix} $&Yes \\
		\hline
		%%%%%%%%%%%%%%%%
		%%%%%%%%%%%%%%%%
		$r\geq 1 $    &$ \begin{pmatrix}
		0&c_2\\
		0&0
		\end{pmatrix} $	&&$ \begin{pmatrix}
		0&d_2\\
		0&0
		\end{pmatrix} $ &\\
		\hline
	\end{tabular}
\end{center}
%%%%%%%%%%%%%%%%%%%%%%Alg 17 cas 4 y1=0	
%\begin{align*}
%	L_{2}^{22}:[e_{1},e_{1}]&=0&,     [e_{1},e_{2}]_{37}&=0,
%	&
%	[e_2,e_{1}]&=0
%	,&       [e_2,e_{2}]_{37}&=e_{1},&\\
%	\alpha(e_{1})&=0,& \alpha(e_{2})&=e_{1}\, ,&
%	\beta(e_{1})&=e_{2},& \beta(e_{2})&=e_{2}.&    	
%	\end{align*}
%	%%%%%%%%%%%%%%%%%%%%%%Alg 17 cas 4	
%	\begin{align*}
%	L_{28}:[e_{1},e_{1}]_{38}&=0&,     [e_{1},e_{2}]_{38}&=e_{1},
%	&
%	[e_2,e_{1}]_{38}&=0
%	,&       [e_2,e_{2}]_{38}&=0,&\\
%	\alpha_{38}(e_{1})&=0,& \alpha_{38}(e_{2})&=e_{1}\, ,&
%	\beta_{38}(e_{1})&=e_{1},& \beta_{38}(e_{2})&=e_{2}.&    	
%	\end{align*}
%	\begin{align*}
%	L_{29}:[e_{1},e_{1}]_{39}&=0&,     [e_{1},e_{2}]_{39}&=0,
%	&
%	[e_2,e_{1}]_{39}&=0
%	,&       [e_2,e_{2}]_{39}&=e_{1},&\\
%	\alpha_{39}(e_{1})&=0,& \alpha_{39}(e_{2})&=e_{1}\, ,&
%	\beta_{39}(e_{1})&=e_{1},& \beta_{39}(e_{2})&=e_{2}.&    	
%	\end{align*}
%%%%%%%%%%%%%%%%%%%%%%Alg 18 cas 4	
\begin{align*}
L_{1}^{16}:[e_{1},e_{1}]&=0,&     [e_{1},e_{2}]&=0,
&
[e_2,e_{1}]&=0
,&       [e_2,e_{2}]&=e_{1},& \\
\alpha(e_{1})&=0,& \alpha(e_{2})&=e_{1}\, ,&
\beta(e_{1})&=e_{1},& \beta(e_{2})&=ze_{1}+e_{2}.&   	
\end{align*}
\begin{center}
	\begin{tabular}{|c|c|c|c|c|}	
		\hline
		&		$ \varGamma_{\alpha^r\beta^l}(L_{1}^{16}) $&
\begin{minipage}{2cm} \vspace{0,3cm} Type of \\ $ \varGamma_{\alpha^0\beta^0}(L_{1}^{16}) $ \\ \end{minipage} &$ Der_{\alpha^r\beta^l}(L_{1}^{16}) $&CN\\
		\hline
		%%%%%%%%%%%%%%%%
		%%%%%%%%%%%%%%%%
		$r=0,\,l\geq 0 $   &$ \begin{pmatrix}
		c_1&c_2\\
		0&c_1
		\end{pmatrix} $	&Small&$ \begin{pmatrix}
		0&c_2\\
		0&0
		\end{pmatrix} $&Yes \\
		\hline
		%%%%%%%%%%%%%%%%
		%%%%%%%%%%%%%%%%
		$r\geq 1 $    &$ \begin{pmatrix}
		0&c_2\\
		0&0
		\end{pmatrix} $	&&$ \begin{pmatrix}
		0&d_2\\
		0&0
		\end{pmatrix} $& \\
		\hline
	\end{tabular}
\end{center}
%%%%%%%%%%%%%%%%%%%%%%Alg 19 cas 4	
\begin{align*}
L_{1}^{17}:[e_{1},e_{1}]&=0, & [e_{1},e_{2}]&=e_{1}, & [e_2,e_{1}]&=-e_{1}, & [e_2,e_{2}]&=(1-z)e_{1},\\
\alpha(e_{1})&=e_{1},& \alpha(e_{2})&=e_{1}+e_{2},& \beta(e_{1})&=e_{1},& \beta(e_{2})&=ze_{1}+e_{2}.  	
\end{align*}
\begin{center}
	\begin{tabular}{|c|c|c|c|}	
		\hline
				$ \varGamma_{\alpha^r\beta^l}(L_{1}^{17}) $&
\begin{minipage}{2cm} \vspace{0,3cm} Type of \\ $ \varGamma_{\alpha^0\beta^0}(L_{1}^{17}) $ \\ \end{minipage} &$ Der_{\alpha^r\beta^l}(L_{1}^{17}) $&CN\\
		\hline
		%%%%%%%%%%%%%%%%
		%%%%%%%%%%%%%%%%
	   $ \begin{pmatrix}
		c_1&(lz+r)c_1\\
		0&c_1
		\end{pmatrix} $	&Small&$ \begin{pmatrix}
		0&c_2\\
		0&0
		\end{pmatrix} $&Yes \\
		\hline
	\end{tabular}
\end{center}	
\end{theorem}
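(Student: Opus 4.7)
The plan is to verify the theorem case-by-case, exploiting the algorithmic description of $(\delta,\mu,\gamma)$-$\alpha^r\beta^l$-derivations given immediately before the statement. For each algebra $L_j^i$ in Proposition \ref{A1}, I would substitute the structure constants $c_{ij}^k$ and the matrices $(a_{ij})$ of $\alpha$ and $(b_{ij})$ of $\beta$ into the system $\mathcal{S}$ in the unknowns $d_{ij}$. For the centroid $\varGamma_{\alpha^r\beta^l}(L_j^i) = Der^{(1,1,0)}_{\alpha^r\beta^l}(L_j^i)$ I would take $(\delta,\mu,\gamma) = (1,1,0)$, and for the ordinary derivations $Der_{\alpha^r\beta^l}(L_j^i) = Der^{(1,1,1)}_{\alpha^r\beta^l}(L_j^i)$ I would take $(\delta,\mu,\gamma)=(1,1,1)$. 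In each case the first two sets of equations, $\sum d_{ik}a_{kj} = \sum a_{ik}d_{kj}$ and $\sum d_{ik}b_{kj} = \sum b_{ik}d_{kj}$, express the commutation of $d$ with $\alpha,\beta$; the third set encodes the generalized derivation identity.

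First I would compute, for every algebra and every $(r,l)$, the matrix of $\alpha^r\beta^l$. Since in most of the listed algebras $\alpha$ and $\beta$ are diagonal (or at worst upper-triangular $2\times 2$), the powers $m_{kj}$ can be written out explicitly; the exceptional cases are $L_1^{10}, L_1^{12}, L_1^{14}, L_1^{16}, L_1^{17}$ in which the matrices are not diagonal, and these will require more care with binomial-type expansions when $r$ or $l$ grows. Next, for each algebra I would separate the parameter ranges according to whether the relevant power of a scaling factor (e.g. $b^r y^l$, $y^l$, $x$, or the combinations $lz+r$) collapses the system, in order to reproduce the case distinctions displayed in the tables. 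The commutation equations typically force $d$ to be diagonal (or upper-triangular) and pin down certain entries in terms of others; substituting this ansatz into the derivation/centroid identity then forces the remaining relations.

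For the \emph{type} columns I would apply the definitions given just before the theorem: once $\varGamma_{\alpha^0\beta^0}(L)$ is explicitly described, ``small'' means every element is a scalar plus a central derivation, i.e. lies in $\C\,\mathrm{id} + CDer_{\alpha^0\beta^0}(L) = \C\,\mathrm{id} + \bigl(\varGamma_{\alpha^0\beta^0}(L)\cap Der^{(1,0,0)}_{\alpha^0\beta^0}(L)\bigr)$. This reduces in each case to checking whether the free diagonal entries of the centroid matrix are forced to coincide or may be chosen independently: if independent, the algebra is declared ``Not small''; if forced equal, ``Small''. For the characteristic nilpotency column I would compute $Der_{\alpha^0\beta^0}(L)$, observe that all of its matrices are strictly triangular or zero, and invoke Engel's theorem; the only exception is $L_1^{10}$, whose derivation algebra contains the rank-one idempotent-type matrices $\begin{pmatrix}d_1&d_2\\ d_1&d_2\end{pmatrix}$, and a direct squaring shows this is not nilpotent, justifying the ``No'' entry.

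The main obstacle is bookkeeping rather than conceptual: the theorem lists roughly twenty algebras and for each several subcases in $(r,l)$, so a single mistake in the structure constants or in reading $\alpha^r\beta^l$ propagates through the whole table. To control this I would treat the five groups (diagonal $\alpha,\beta$; diagonal $\alpha$ with upper-triangular $\beta$; upper-triangular $\alpha$ and $\beta$; etc.) uniformly, writing the derivation matrix with symbolic entries and recording which entries are killed by which equation, exactly as one would do in a computer-algebra script. Given the explicit computations for $3$-dimensional Heisenberg already displayed after Theorem \ref{classGder}, the method is in place; the remaining work is mechanical verification for each of the finite list of algebras in Proposition \ref{A1}.
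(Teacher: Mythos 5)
Your proposal follows essentially the same route as the paper, which likewise only sets up the polynomial system $\mathcal{S}$ from the structure constants and the matrices of $\alpha^{r}\beta^{l}$ and then solves it case by case (by computer algebra) for $(\delta,\mu,\gamma)=(1,1,0)$ and $(1,1,1)$, reading off smallness and characteristic nilpotency from the resulting matrices. One small caution: smallness is not always decided by whether the diagonal entries of the centroid are forced to coincide (compare $L_{1}^{11}$ and $L_{3}^{11}$, which have the same centroid matrix but different types); you must actually test whether the non-scalar part is a central derivation, as your stated definition $\C\,\mathrm{id}+CDer_{\alpha^{0}\beta^{0}}(L)$ correctly requires.
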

\begin{cor} The following statements hold.
\begin{enumerate}[label=\upshape{(\roman*)},left=0pt]
\item  The dimensions of the centroids of $2$-dimensional BiHom-Lie Algebras vary between one  and two.
\item  Every $ 2 $-dimensional multiplicative BiHom-Lie algebra have a small centroid if and only if it isomorphic
to one of the following  BiHom-Lie algebras:
\begin{align*}
L_{1}^{1}(z_1)(z_1\neq 0),\,L_{2}^{1},\,L_{4}^{1}(z_1)(z_1 \neq 0),\,L_{5}^{1}, \\ \,L_{1}^{8},\,L_{1}^{10},\,L_{3}^{11},\,L_{1}^{12},\,L_{1}^{13},\,L_{1}^{15},\,L_{1}^{16},\,L_{1}^{17}.
\end{align*}
\item The dimensions of the derivations of $2$-dimensional BiHom-Lie algebras vary between zero  and two.
\item Every $ 2 $-dimensional multiplicative BiHom-Lie algebra is characteristically nilpotent
if and only if it is not isomorphic to $ L_1^{10} $. 	 	
\end{enumerate}
\end{cor}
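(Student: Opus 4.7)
The plan is to derive each of the four claims directly by inspection of the tables compiled in the main theorem, which was established case-by-case over the complete list of isomorphism classes given in Proposition \ref{A1}. Thus no fresh computations are required beyond reading off the appropriate entries; the task is to organize the case-check cleanly.

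First I would establish (i) and (iii) together. For each algebra $L_{j}^{i}$ in Proposition \ref{A1}, the corresponding row of the theorem displays $\varGamma_{\alpha^{0}\beta^{0}}(L_{j}^{i})$ and $Der_{\alpha^{0}\beta^{0}}(L_{j}^{i})$ as matrix sets parametrized by one or two free scalars, so counting free parameters gives at once $\dim \varGamma_{\alpha^{0}\beta^{0}}(L_{j}^{i})\in\{1,2\}$ and $\dim Der_{\alpha^{0}\beta^{0}}(L_{j}^{i})\in\{0,1,2\}$. Scanning the full list yields that both extremes of each range are attained (for example $\dim \varGamma=1$ for $L_{1}^{8}$ and $\dim \varGamma=2$ for $L_{1}^{11}$; $\dim Der=0$ for $L_{1}^{1}$ with $z_{1}=0$ and $\dim Der=2$ for $L_{1}^{10}$), which proves (i) and (iii).

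Next, for (ii), I would simply collect from the tables the algebras for which the type of $\varGamma_{\alpha^{0}\beta^{0}}$ is labeled \emph{Small}, using the definition given right before the theorem: an indecomposable BiHom-Lie algebra has small centroid when $\varGamma_{\alpha^{0}\beta^{0}}(L)$ is generated by central derivations together with the scalars, and the decomposable case is reduced to the indecomposable factors. Reading off gives exactly the list stated. The minor point that needs care is that for $L_{1}^{1}$ and $L_{4}^{1}$ the smallness depends on whether the structure parameter $z_{1}$ vanishes; the list in (ii) accordingly encodes these restrictions via $z_{1}\neq 0$. For the decomposable algebras ($L_{3}^{1}$, $L_{1}^{2}$, $L_{1}^{5}$, $L_{1}^{9}$, identified by the earlier corollary), the centroid has type \emph{Not small}, so they are excluded from the list, consistently with the definition applied factor by factor.

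Finally, for (iv), the characteristic-nilpotency criterion requires $Der_{\alpha^{0}\beta^{0}}(L)$ to be a nilpotent Lie algebra. Reading the CN column, every $2$-dimensional BiHom-Lie algebra except $L_{1}^{10}$ has a derivation algebra consisting of strictly triangular or diagonal nilpotent matrices (indeed in most cases $Der_{\alpha^{0}\beta^{0}}$ is at most one-dimensional and spanned by a nilpotent matrix, hence trivially nilpotent as a Lie algebra), which gives Yes. For $L_{1}^{10}$, the table shows
\[
Der_{\alpha^{0}\beta^{0}}(L_{1}^{10})=\left\{\begin{pmatrix} d_{1}&d_{2}\\ d_{1}&d_{2}\end{pmatrix}\mid d_{1},d_{2}\in\C\right\},
\]
and one checks that this $2$-dimensional Lie algebra contains a rank-one idempotent-type element whose commutator action is nonzero, so it is not nilpotent. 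The only potential obstacle is verifying this last non-nilpotency claim; it is handled by computing the bracket of two generic matrices in the displayed form and noting that the resulting commutator algebra is isomorphic to a non-abelian $2$-dimensional Lie algebra, which is solvable but not nilpotent. This yields (iv).
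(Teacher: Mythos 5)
Your proposal is correct and follows essentially the same (implicit) argument as the paper: the corollary is obtained by reading off the tables of the preceding theorem case by case, exactly as you do, and your explicit non-nilpotency check for $Der_{\alpha^{0}\beta^{0}}(L_{1}^{10})$ (a two-dimensional non-abelian, hence solvable but non-nilpotent, Lie algebra) is the right justification for (iv). One caveat for (ii): the tables actually mark $\varGamma_{\alpha^{0}\beta^{0}}(L_{1}^{6})$ and $\varGamma_{\alpha^{0}\beta^{0}}(L_{1}^{7})$ as \emph{Small} as well, so your assertion that reading off gives ``exactly the list stated'' conflicts with the paper's own tables --- this is an internal inconsistency of the paper rather than a gap in your method, but it should be flagged rather than glossed over.
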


%%%%%%%%%%%%%%%%%%%%%%%%%%

\end{document}